\newtheorem{theorem}{Theorem}[section]
\newtheorem{lemma}[theorem]{Lemma}
\newtheorem{proposition}[theorem]{Proposition}
\newtheorem{claim}[theorem]{Claim}
\theoremstyle{definition}
\newtheorem{definition}[theorem]{Definition}
\newtheorem{remark}[theorem]{Remark}
\def\KHaus{{\mathsf{KHaus}}}
\def\Clop{{\mathsf{Clop}}}
\def\RO{{\mathcal{RO}}}
\def\G{{\mathcal{G}}}
\def\int{{\mathsf{int}}}
\def\cl{{\mathsf{cl}}}
\def\dev{{\mathsf{DeV}}}
\def\R{{\mathfrak{R}}}
\def\KRFrm{{\mathsf{KRFrm}}}
\def\KRFrmR{{\mathsf{KRFrm^P}}}
\def\KRFrmC{{\mathsf{KRFrm^C}}}
\def\O{{\mathcal{O}}}
\def\Ult{{\mathsf{Ult}}}
\def\R{\mathrel{R}}
\def\S{\mathrel{S}}
\def\T{\mathrel{T}}
\def\Q{\mathrel{Q}}
\def\E{\mathrel{E}}
\def\RI{{\mathcal{RI}}}
\def\NI{{\mathcal{NI}}}
\newcommand{\KHausR}{\mathsf{KHaus}^\mathsf{R}}
\newcommand{\KHausC}{\mathsf{KHaus}^\mathsf{C}}
\newcommand{\Gle}{\mathsf{Gle}}
\newcommand{\GleR}{\mathsf{Gle}^\mathsf{R}}
\newcommand{\GleC}{\mathsf{Gle}^\mathsf{C}}
\newcommand{\StoneER}{\mathsf{StoneE}^\mathsf{R}}
\newcommand{\StoneEC}{\mathsf{StoneE}^\mathsf{C}}
\newcommand{\StoneEF}{\mathsf{StoneE}^\mathsf{F}}
\newcommand{\SubSfive}{\mathsf{SubS5^S}}
\newcommand{\SubSfiveCS}{\mathsf{SubS5^{CS}}}
\newcommand{\SubSfiveF}{\mathsf{SubS5^F}}
\newcommand{\devC}{\mathsf{DeV^C}}
\newcommand{\devS}{\mathsf{DeV^S}}
\newcommand{\devCS}{\mathsf{DeV^{CS}}}
\newcommand{\devF}{\mathsf{DeV^F}}
\newcommand{\StoneEf}{\mathsf{StoneE^F}}
\newcommand{\KRFrmP}{{\mathsf{KRFrm^P}}}
\newcommand{\I}{{\mathcal{I}}}
\newcommand{\bool}{{\mathfrak{B}}}
\newcommand{\F}{\Delta}
\newcommand\conv[1]{{#1 \raisebox{-2pt}{\scalebox{1.2}{$\breve{\hspace{3.5pt}}$}}}}
\newcommand\inv[1]{\widehat{#1}}
\renewcommand{\c}{\mathsf{c}}
\title{MacNeille completions of subordination algebras}
\author[M. Abbadini]{Marco Abbadini}
\address{School of Computer Science,
University of Birmingham,
B15 2TT Birmingham (UK)}
\email{marco.abbadini.uni@gmail.com}
\author[G. Bezhanishvili]{Guram Bezhanishvili}
\address{Department of Mathematical Sciences\\
New Mexico State University\\
Las Cruces NM 88003\\
USA}
\email{guram@nmsu.edu}
\author[L. Carai]{Luca Carai}
\address{Dipartimento di Matematica,
Universit\`a degli Studi di Milano,
20133 Milan (MI),
Italy}
\email{luca.carai.uni@gmail.com}
\date{}
\keywords{Compact Hausdorff space, Gleason cover, closed relation, continuous relation, de Vries algebra, subordination relation, proximity, MacNeille completion, ideal completion, compact regular frame}
\subjclass[2020]{18F70, 54E05, 06D22, 06E15, 54D30, 54G05}
\begin{document}
	
\begin{abstract}
$\mathsf{S5}$-subordination algebras are a natural generalization of de Vries algebras. Recently it was proved that the category $\SubSfive$ of $\mathsf{S5}$-su\-bo\-rdi\-na\-tion algebras and compatible subordination relations between them is equivalent to the category of compact Hausdorff spaces and closed relations. We generalize MacNeille completions of boolean algebras to the setting of $\mathsf{S5}$-subordi\-nation algebras, and utilize the relational nature of the morphisms in $\SubSfive$ to prove that the MacNeille completion functor establishes an equivalence between $\SubSfive$ and its full subcategory consisting of de Vries algebras. 
We also show that the functor that associates to each $\mathsf{S5}$-subordination algebra the frame of its round ideals establishes a dual equivalence between $\SubSfive$ and the category of compact regular frames and preframe homomorphisms. Our results are choice-free and provide further insight into Stone-like dualities for compact Hausdorff spaces with various morphisms between them. In particular, we show how they restrict to the wide subcategories of $\SubSfive$ corresponding to continuous relations and continuous functions between compact Hausdorff spaces.
\end{abstract}
	
\maketitle

\tableofcontents

\section{Introduction}

With each compact Hausdorff space $X$, we can associate numerous algebraic structures that determine $X$ up to homeomorphism. This 
yields various dualities for the category $\KHaus$ of compact Hausdorff spaces and continuous functions. In this paper we are interested in two dualities for $\KHaus$ from pointfree topology. By Isbell duality \cite{Isb72}, $\KHaus$ is dually equivalent to the category $\KRFrm$ of compact regular frames and frame homomorphisms; and by de Vries duality \cite{deV62}, $\KHaus$ is dually equivalent to the category $\dev$ of de Vries algebras and de Vries morphisms. 

Isbell duality is established by working with the contravariant functor $\O \colon \KHaus\to\KRFrm$ which associates with each compact Hausdorff space $X$ the compact regular frame $\O(X)$ of open subsets of $X$, and with each continuous function $f\colon X\to Y$ the frame homomorphism $f^{-1} \colon \O(Y)\to\O(X)$. De Vries duality is established by working with the contravariant functor $\RO\colon \KHaus\to\dev$. Writing $\int$ for the interior and $\cl$ for the closure, $\RO$ associates with each $X\in\KHaus$ the de Vries algebra $(\RO(X),\prec)$ of regular open subsets of $X$, where $U\prec V$ iff $\cl(U)\subseteq V$, and with each continuous function $f\colon X\to Y$ the de Vries morphism $\RO(f)\colon \RO(Y)\to\RO(X)$ given by $\RO(f)(V)=\int(\cl f^{-1}[V])$ for each $V\in\RO(Y)$.

As a consequence of Isbell and de Vries dualities, $\KRFrm$ is equivalent to $\dev$. This equivalence can be obtained directly, without first passing to $\KHaus$ \cite{Bez12}. We thus arrive at the following diagram, where the horizontal arrow represents an equivalence and the slanted arrows with the letter $d$ on top represent dual equivalences.
\begin{figure}[!h]
\[
\begin{tikzcd}
 & \KHaus \arrow[dl, leftrightarrow, "d"'] \arrow[dr, leftrightarrow, "d"] & \\
\KRFrm \arrow[rr, leftrightarrow] & & \dev
\end{tikzcd}
\]
\end{figure}

Several authors have considered generalizations of $\KHaus$ where functions are replaced by relations. A relation $R$ between two compact Hausdorff spaces $X$ and $Y$ is {\em closed} if $R$ is a closed subset of $X\times Y$ and it is {\em continuous} if in addition the $R$-preimage of each open subset of $Y$ is open in $X$. A function between compact Hausdorff spaces is closed iff it is continuous. But for relations this results in two different categories $\KHausR$ and $\KHausC$. In the former, morphisms are closed relations; and in the latter, they are continuous relations. Clearly $\KHaus$ is a wide subcategory of $\KHausC$, which in turn is a wide subcategory of $\KHausR$. 

In \cite{BezhanishviliGabelaiaEtAl2019} $\KRFrm$ was generalized to $\KRFrmC$, $\dev$ to $\devC$ (see \cref{sec: prelims} for the definitions of these categories), and it was shown that the commutative diagram above extends to the following commutative diagram.
\begin{figure}[!h]
\[
\begin{tikzcd}
 & \KHausC \arrow[dl, leftrightarrow, "d"'] \arrow[dr, leftrightarrow, "d"] & \\
\KRFrmC \arrow[rr, leftrightarrow] & & \devC
\end{tikzcd}
\]
\end{figure}

On the other hand, in \cite{Tow96, JKM01} $\KRFrm$ was generalized to $\KRFrmR$, where morphisms are preframe homomorphisms (that is, they preserve finite meets and directed joins), and it was shown that $\KRFrmR$ is dually equivalent to $\KHausR$. In a recent paper \cite{ABC22a} we introduced the category $\devS$ whose objects are de Vries algebras and whose morphisms are compatible subordination relations. We proved that $\devS$ is equivalent to $\KHausR$ and hence dually equivalent to $\KRFrmR$. Thus, we arrive at the following commutative diagram that extends the two diagrams above.
\begin{figure}[!h]
\[
\begin{tikzcd}
 & \KHausR \arrow[dl, leftrightarrow, "d"'] \arrow[dr, leftrightarrow] & \\
\KRFrmP \arrow[rr, leftrightarrow, "d"] & & \devS
\end{tikzcd}
\]
\end{figure}

Our aim here is to give a direct choice-free proof of the duality between $\KRFrmR$ and $\devS$. From this we derive a direct choice-free proof of the equivalence between $\KRFrmC$ and $\devC$, as well as an alternative choice-free proof of the equivalence between $\KRFrm$ and $\dev$. 

Our main tool is the category $\SubSfive$ of $\mathsf{S5}$-subordination algebras introduced in \cite{ABC22a}.
Objects of $\SubSfive$ were already considered by Meenakshi \cite{Mee66}, who studied proximity relations on an arbitrary boolean algebra.
In \cite{ABC22a} we used a generalization of Stone duality to closed relations \cite{Cel18,KurzMoshierEtAl2023} and the machinery of allegories \cite{FS90} to show that $\SubSfive$ is equivalent to the category $\StoneER$ whose objects are Stone spaces equipped with a closed equivalence relation and whose morphisms are special closed relations (see \cref{def-StoneER and GleR}\eqref{def-StoneER}). Since $\devS$ is a full subcategory of $\SubSfive$, restricting this equivalence yields an equivalence between $\devS$ and the full subcategory $\GleR$ of $\StoneER$ consisting of Gleason spaces. 
It turns out that these four categories are equivalent to $\KHausR$. Consequently, $\devS$ is equivalent to $\SubSfive$, but the proof goes through $\KHausR$ and hence uses the axiom of choice.

In this paper we generalize MacNeille completions of boolean algebras to $\mathsf{S5}$-subordination algebras and give a direct choice-free proof of the equivalence between $\SubSfive$ and $\devS$.
We also specialize the notion of a round ideal of a proximity lattice \cite{War75} to our setting to obtain a contravariant functor from $\SubSfive$ to $\KRFrmP$, yielding a choice-free proof that $\SubSfive$ is dually equivalent to $\KRFrmR$.
We thus arrive at the following commutative diagram.
\begin{figure}[!h]
\[
\begin{tikzcd}
 & \SubSfive \arrow[dl, leftrightarrow, "d"'] \arrow[dr, leftrightarrow] & \\
\KRFrmP \arrow[rr, leftrightarrow, "d"] & & \devS
\end{tikzcd}
\]
\end{figure}

We also study the wide subcategories of these categories whose morphisms encode continuous relations and continuous functions between compact Hausdorff spaces.

The paper is organized as follows.
In \cref{sec: prelims} we recall the existing dualities for compact Hausdorff spaces that are relevant for our purposes.
In \cref{sec: id comple} we describe the round ideal functor from $\SubSfive$ to $\KRFrmP$.
In \cref{sec:MacNeille} we define MacNeille completions of $\mathsf{S5}$-subordination algebras and prove that the resulting functor yields an equivalence between $\SubSfive$ and $\devS$. We then use this result to show that the round ideal functor from $\SubSfive$ to $\KRFrmP$ is a dual equivalence.
In \cref{sec: cont subs} we study the wide subcategories of these categories whose morphisms encode continuous relations between compact Hausdorff spaces. In \cref{sec:functional} we further restrict our attention to the morphisms that encode continuous functions between compact Hausdorff spaces. Finally, in \cref{sec: dually} we give dual descriptions of the round ideal and MacNeille completions of $\mathsf{S5}$-subordination algebras.

All the categories considered in this paper are listed in \cref{table1,table2,table3,table4} and all the equivalences and dual equivalences in \cref{fig:diagram2} at the end of \cref{sec:functional}.

\section{Preliminaries} \label{sec: prelims}

In this section we briefly recall Isbell duality, de Vries duality, and their generalizations.
We start by recalling some basic definitions from pointfree topology (see, e.g., \cite{PP12}). A {\em frame} or {\em locale} is a complete lattice $L$ satisfying the join-infinite distributive law 
\[
a\wedge\bigvee S=\bigvee\{ a\wedge s \mid s\in S \}.
\]
Each $a\in L$ has the {\em pseudocomplement} given by $a^*=\bigvee\{ x\in L \mid a\wedge x=0 \}$. We say that $a$ is {\em compact} if $a\le \bigvee S$ implies $a\le \bigvee T$ for some finite $T\subseteq S$, and that $a$ is {\em well-inside} $b$ (written $a\prec b$) if $a^*\vee b=1$. A frame $L$ is {\em compact} if $1$ is compact and it is {\em regular} if $a=\bigvee\{ x\in L \mid x\prec a\}$ for each $a\in L$.

A {\em frame homomorphism} between two frames is a map 
that preserves arbitrary joins and finite meets.
\label{def-of-KRFrm}\label{def-of-KHaus}We recall from the introduction that $\KRFrm$ is the category of compact regular frames and frame homomorphisms and that $\KHaus$ is the category of compact Hausdorff spaces and continuous functions.
\begin{theorem} [Isbell duality] \label{t:Isbell}
	$\KRFrm$ is dually equivalent to $\KHaus$. 
\end{theorem}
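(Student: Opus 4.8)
The plan is to exhibit the classical contravariant adjunction between frames and topological spaces, and then show that it cuts down to a dual equivalence on the subcategories $\KHaus$ and $\KRFrm$. First I would set up the two functors. In one direction, send a compact Hausdorff space $X$ to the frame $\O(X)$ of its open sets, and a continuous map $f\colon X\to Y$ to the frame homomorphism $f^{-1}\colon\O(Y)\to\O(X)$; since $f^{-1}$ preserves arbitrary unions and finite intersections, this is well defined, and $\O(X)$ is compact because $X$ is compact and regular because $X$ is compact Hausdorff (every open set is the union of the opens whose closures it contains). In the other direction, send a frame $L$ to its space of points, i.e.\ the set $\mathrm{pt}(L)$ of frame homomorphisms $L\to\{0,1\}$ (equivalently, completely prime filters), topologized by the sets $\{p : p(a)=1\}$ for $a\in L$; a frame homomorphism $h\colon L\to M$ induces the continuous map $\mathrm{pt}(h)\colon\mathrm{pt}(M)\to\mathrm{pt}(L)$ given by precomposition.

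Next I would analyze the two natural transformations of the adjunction: the unit $X\to\mathrm{pt}(\O(X))$ sending a point $x$ to the neighbourhood filter homomorphism $U\mapsto[x\in U]$, and the counit $L\to\O(\mathrm{pt}(L))$ sending $a$ to $\{p : p(a)=1\}$. The heart of the argument is to show each is an isomorphism under the stated hypotheses. For the unit, when $X$ is compact Hausdorff the map is injective by Hausdorffness, surjective because every completely prime filter of opens converges (using compactness to get a cluster point and the frame structure to pin it down uniquely), and a homeomorphism by a direct check that it is open onto its image. For the counit, one shows that when $L$ is compact and regular it is a \emph{spatial} frame and the counit is an isomorphism: injectivity uses regularity (if $a\not\le b$, interpolate $c\prec a$ with $c\not\le b$, and a compactness-plus-Zorn argument produces a completely prime filter containing $a$ but not $b$), and surjectivity is immediate from the definition of the topology on $\mathrm{pt}(L)$.

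Finally I would assemble these facts: the unit and counit being isomorphisms on $\KHaus$ and $\KRFrm$ respectively gives the dual equivalence, and one checks naturality, which is routine. I expect the main obstacle to be the surjectivity of the counit, i.e.\ proving that a compact regular frame has ``enough points'': this is where one needs a maximality argument (a completely prime filter avoiding a given element) and it is the step that genuinely uses both compactness and regularity together, as well as the only place the argument is non-constructive. Everything else — that $\O(X)$ is a compact regular frame, that the constructions are functorial, that the unit is a homeomorphism — is a matter of unwinding definitions.
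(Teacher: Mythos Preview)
Your proof sketch is the standard approach to Isbell duality and is essentially correct. Note, however, that the paper does not prove this theorem at all: it is stated as a classical result and attributed to Isbell \cite{Isb72}, with no argument given. So there is nothing to compare against beyond observing that your outline is the textbook route (set up the $\O$/$\mathrm{pt}$ adjunction and verify that the unit and counit restrict to isomorphisms on $\KHaus$ and $\KRFrm$).

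One small slip worth flagging: in your final paragraph you call the hard step ``surjectivity of the counit,'' but in the preceding paragraph you correctly identified it as \emph{injectivity} of the map $L\to\O(\mathrm{pt}(L))$ (surjectivity being immediate from how the topology on $\mathrm{pt}(L)$ is defined). The content of ``enough points'' is precisely that distinct frame elements can be separated by a completely prime filter, which is the injectivity statement; this is indeed the place where choice enters and where compactness and regularity are both genuinely used.
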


A \emph{preframe homomorphism} between two frames is a map that preserves directed joins and finite meets.
\label{def-of-KRFrmP} We let $\KRFrmP$ be the category of compact regular frames and preframe homomorphisms. Clearly $\KRFrm$ is a wide subcategory of $\KRFrmP$. 

We recall that a relation $R\subseteq X\times Y$ between compact Hausdorff spaces is {\em closed} if $R$ is a closed subset of $X\times Y$. As usual, for $x\in X$ and $y\in Y$, we write
\[
R[x]=\{y\in Y\mid x \R y\} \quad \mbox{and} \quad R^{-1}[y]=\{x\in X\mid x \R y\}.
\]
Also, for $F\subseteq X$ and $G\subseteq Y$, we write
\[
R[F]=\bigcup\{ R[x] \mid x\in F\} \quad \mbox{and} \quad R^{-1}[G]=\bigcup \{ R^{-1}[y] \mid y\in G\}.
\]
Then $R$ is closed iff $R[F]$ is closed for each closed $F\subseteq X$ and $R^{-1}[G]$ is closed for each closed $G\subseteq Y$ (see, e.g., \cite[Lem.~2.12]{BBSV17}).
\label{def-KHausR} We let $\KHausR$ be the category of compact Hausdorff spaces and closed relations, where identities are identity relations and composition is relation composition. We recall that for two relations $R_1 \subseteq X_1\times X_2$ and $R_2\subseteq X_2\times X_3$ the relation composition $R_2 \circ R_1 \subseteq X_1 \times X_3$ is defined by 
\[
x_1 \mathrel{(R_2 \circ R_1)} x_3 \iff \exists x_2 \in X_2 : x_1 \R_1 x_2\text{ and }x_2 \R_2 x_3.
\] 

The category $\KHausR$ is a full subcategory of the category of stably compact spaces and closed relations introduced and studied in \cite{JKM01}.
It is symmetric in that if $R$ is a closed relation, then its converse $\conv{R}\colon X_2\to X_1$ (defined by $y \mathrel{\conv{R}} x$ iff $x \R y$) is also closed. This defines a dagger on $\KHausR$ with which $\KHausR$ forms an allegory (see, e.g., \cite[Lem.~3.6]{ABC22a}).
The following theorem generalizes Isbell duality: 

\begin{theorem} [\cite{Tow96, JKM01}] \label{thm:isbell-gen}
$\KRFrmP$ is dually equivalent to $\KHausR$.
\end{theorem}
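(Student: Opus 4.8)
The plan is to extend the Isbell functor of \cref{t:Isbell} to a contravariant functor $\O\colon\KHausR\to\KRFrmP$ and to show that it is a dual equivalence. On objects, $\O$ is unchanged. Given a closed relation $R\subseteq X\times Y$, regarded as a morphism $X\to Y$ in $\KHausR$, I set
\[
\Box_R\colon\O(Y)\to\O(X),\qquad \Box_R(U)=\{x\in X\mid R[x]\subseteq U\}=X\setminus R^{-1}[Y\setminus U].
\]
Since $R$ is closed, $R^{-1}[Y\setminus U]$ is closed, so $\Box_R(U)$ is open and $\Box_R$ is well defined; one checks easily that $\Box_R(Y)=X$ and that $\Box_R$ preserves finite meets. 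For directed joins, note that each fibre $R[x]$ is a closed, hence compact, subset of $Y$, so it is contained in some member of any directed open cover; thus $R[x]\subseteq\bigcup_iU_i$ iff $R[x]\subseteq U_i$ for some $i$, whence $\Box_R(\bigcup_iU_i)=\bigcup_i\Box_R(U_i)$ for directed families. A short computation with fibres gives $\Box_{\Delta_X}=\id_{\O(X)}$ and $\Box_{S\circ R}=\Box_R\circ\Box_S$, so $\O$ is a contravariant functor taking values in $\KRFrmP$; when $R$ is the graph of a continuous $f$ one recovers $\Box_R=f^{-1}$, so this genuinely extends Isbell's functor.

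Essential surjectivity on objects is contained in \cref{t:Isbell}, so it suffices to prove that $\O$ is full and faithful, i.e.\ that for each pair of compact Hausdorff spaces $X,Y$ the assignment $R\mapsto\Box_R$ is a bijection from closed relations $X\to Y$ onto preframe homomorphisms $\O(Y)\to\O(X)$. I would exhibit the inverse assignment explicitly: given a preframe homomorphism $h\colon\O(Y)\to\O(X)$, define $R_h\subseteq X\times Y$ by
\[
R_h[x]=\bigcap\{\cl V\mid V\in\O(Y),\ x\in h(V)\},
\]
equivalently, $x\mathrel{R_h}y$ iff every $V\in\O(Y)$ with $x\in h(V)$ satisfies $y\in\cl V$. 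Then $R_h=\bigcap_{V\in\O(Y)}\bigl((X\setminus h(V))\times Y\cup X\times\cl V\bigr)$ is an intersection of closed subsets of $X\times Y$, hence closed, so $R_h$ is a morphism of $\KHausR$.

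It then remains to check that $R\mapsto\Box_R$ and $h\mapsto R_h$ are mutually inverse. For $R_{\Box_R}=R$: the inclusion $R[x]\subseteq R_{\Box_R}[x]=\bigcap\{\cl V\mid R[x]\subseteq V\}$ is immediate, while for the converse one uses that $Y$ is regular — given $y\notin R[x]$, separate the closed set $R[x]$ from $y$ by an open $V$ with $R[x]\subseteq V$ and $y\notin\cl V$, so $y\notin R_{\Box_R}[x]$. For $\Box_{R_h}=h$: the inclusion $\Box_{R_h}(U)\subseteq h(U)$ uses that $Y\setminus U$ is compact, so $R_h[x]\subseteq U$ already forces $\cl V_1\cap\dots\cap\cl V_n\subseteq U$ for finitely many $V_i$ with $x\in h(V_i)$, and then $x\in h(V_1)\cap\dots\cap h(V_n)=h(V_1\cap\dots\cap V_n)\subseteq h(U)$ by preservation of finite meets and monotonicity. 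The reverse inclusion $h(U)\subseteq\Box_{R_h}(U)$ is the step I expect to be the main obstacle, and it is exactly where regularity of the frame $\O(Y)$ together with preservation of directed joins are used: writing $U=\bigvee\{W\in\O(Y)\mid W\prec U\}$ as a directed join and using that $h$ preserves directed joins, from $x\in h(U)$ we get some $W\prec U$ with $x\in h(W)$, hence $R_h[x]\subseteq\cl W\subseteq U$, i.e.\ $x\in\Box_{R_h}(U)$. With these two identities, $\O$ is fully faithful and essentially surjective, hence a dual equivalence, the unit and counit being inherited from Isbell duality.
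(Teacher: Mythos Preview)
The paper does not give its own proof of this theorem; it merely cites it as a known result of Townsend and Jung--Kegelmann--Moshier, and later (in \cref{thm:dual-equivalence}) obtains an alternative, choice-free route by passing through $\SubSfive$ and $\devS$. Your proposal, by contrast, supplies a direct self-contained argument in the spirit of the original references: extend $\O$ to closed relations via $\Box_R(U)=X\setminus R^{-1}[Y\setminus U]$, verify it lands in preframe homomorphisms, and exhibit an explicit inverse on hom-sets by $R_h[x]=\bigcap\{\cl V\mid x\in h(V)\}$.

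Your argument is correct. The verifications that $\Box_R$ preserves finite meets and directed joins, that $R_h$ is closed, and that $R_{\Box_R}=R$ (using regularity of $Y$) are all sound. In the fullness step $\Box_{R_h}=h$, both inclusions are fine: for $\Box_{R_h}(U)\subseteq h(U)$ you use compactness of $Y\setminus U$ to pass to finitely many $V_i$ with $\cl V_1\cap\dots\cap\cl V_n\subseteq U$, whence $V_1\cap\dots\cap V_n\subseteq U$, and then finite-meet preservation and monotonicity of $h$ give $x\in h(U)$; for $h(U)\subseteq\Box_{R_h}(U)$ you correctly invoke regularity of $\O(Y)$ to write $U$ as a directed join of $W\prec U$ and use that $h$ preserves directed joins. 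Essential surjectivity is indeed inherited from Isbell duality since a frame isomorphism is in particular a preframe isomorphism.

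So your proof is valid and is essentially the standard direct construction; the paper's contribution lies elsewhere, in re-deriving this duality choice-freely through the $\mathsf{S5}$-subordination and MacNeille-completion machinery rather than via points.
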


\label{def-KHausC} A closed relation $R\subseteq X\times Y$ between compact Hausdorff spaces is {\em continuous} if $V$ open in $Y$ implies $R^{-1}[V]$ is open in $X$. Let $\KHausC$ be the wide subcategory of $\KHausR$ whose morphisms are continuous relations. 

\label{def-c-morphism} In \cite[Def.~4.3]{BezhanishviliGabelaiaEtAl2019}, motivated by Johnstone's construction of the Vietoris frame of a compact regular frame \cite[Sec.~III.4]{Joh82}, a preframe homomorphism $\Box \colon L \to M$ between compact regular frames is called {\em continuous} or a \emph{$\mathsf{c}$-morphism} if there is a join-preserving $\Diamond \colon L \to M$ such that 
\[
\Box(a \vee b) \le \Box a \vee \Diamond b \quad \mbox{and} \quad \Box a \wedge \Diamond b \le \Diamond(a \wedge b).
\] 
Let $\KRFrmC$ be the wide subcategory of $\KRFrmR$ whose morphisms are $\c$-morphisms.
The duality of \cref{thm:isbell-gen} then restricts to the following generalization of Isbell duality:

\begin{theorem}[{\cite[Thm.~4.8]{BezhanishviliGabelaiaEtAl2019}}] \label{thm:KRFrmC-dual-KHausC}
	$\KRFrmC$ is dually equivalent to $\KHausC$.
\end{theorem}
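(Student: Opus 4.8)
The plan is to show that the dual equivalence of \cref{thm:isbell-gen} between $\KRFrmP$ and $\KHausR$ restricts to one between the wide subcategories $\KRFrmC$ and $\KHausC$. Recall that this dual equivalence can be realized by sending a compact Hausdorff space $X$ to the frame $\O(X)$ and a closed relation $R\subseteq X\times Y$ to the preframe homomorphism $\Box_R\colon\O(Y)\to\O(X)$ defined by $\Box_R(V)=\{x\in X\mid R[x]\subseteq V\}=X\setminus R^{-1}[Y\setminus V]$ (well defined because $R^{-1}$ of a closed set is closed), and that every preframe homomorphism between compact regular frames arises in this way from a unique closed relation. Since $\KRFrmC\subseteq\KRFrmP$ and $\KHausC\subseteq\KHausR$ are wide and the assignment $R\mapsto\Box_R$ respects identities and composition, the theorem reduces to a single claim: \emph{a closed relation $R\subseteq X\times Y$ is continuous if and only if $\Box_R$ is a $\c$-morphism.}

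For the ``only if'' direction I would take $\Diamond_R(V):=R^{-1}[V]$ as the witnessing join-preserving map. It takes values in $\O(X)$ precisely because $R$ is continuous, it clearly preserves arbitrary joins, and the two required inequalities $\Box_R(a\vee b)\le\Box_R a\vee\Diamond_R b$ and $\Box_R a\wedge\Diamond_R b\le\Diamond_R(a\wedge b)$ follow by brief pointwise arguments (using $R[x]\subseteq V$ when $x\in\Box_R(V)$, and $R[x]\cap V\ne\emptyset$ when $x\in\Diamond_R(V)$).

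The ``if'' direction is the heart of the matter, and I expect it to be the main obstacle. Given a $\c$-morphism $\Box_R$ with witness $\Diamond$, I would fix an open $V\subseteq Y$ together with a point $x_0\in R^{-1}[V]$, pick $y_0\in V$ with $x_0\mathrel{R}y_0$, and apply regularity of $Y$ twice to interpolate opens with $y_0\in W\subseteq\cl(W)\subseteq W'\subseteq\cl(W')\subseteq V$. Then: (a) applying $\Box_R(a\vee b)\le\Box_R a\vee\Diamond b$ to $a=Y\setminus\cl(W)$ and $b=W'$ (so that $a\vee b=Y$) forces $x_0\in\Diamond(W')$, since $y_0\in R[x_0]\cap\cl(W)$ shows $x_0\notin\Box_R(Y\setminus\cl(W))$; and (b) applying $\Box_R a\wedge\Diamond b\le\Diamond(a\wedge b)$ to $a=Y\setminus\cl(W')$ and $b=W'$ forces $\Diamond(W')\subseteq R^{-1}[V]$, because any $x\in\Diamond(W')$ with $R[x]\cap V=\emptyset$ would have $R[x]\subseteq Y\setminus\cl(W')$, hence $x\in\Box_R(Y\setminus\cl(W'))\cap\Diamond(W')\subseteq\Diamond\bigl((Y\setminus\cl(W'))\cap W'\bigr)=\Diamond(\emptyset)=\emptyset$. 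As $\Diamond(W')$ is open, (a) and (b) together exhibit an open neighbourhood of $x_0$ inside $R^{-1}[V]$; thus $R^{-1}[V]$ is open and $R$ is continuous.

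Once the claim is established, the functors of \cref{thm:isbell-gen} carry morphisms of $\KHausC$ to morphisms of $\KRFrmC$ and vice versa, so they restrict to quasi-inverse functors exhibiting the desired dual equivalence. The subtle point I would isolate as a lemma is the ``if'' direction, whose crux is the two-step interpolation $y_0\in W\subseteq\cl(W)\subseteq W'\subseteq\cl(W')\subseteq V$: step (a) uses only $\cl(W)\subseteq W'$ to place $x_0$ in the open set $\Diamond(W')$, while step (b) uses $\cl(W')\subseteq V$ to confine $\Diamond(W')$ inside $R^{-1}[V]$, and a single interpolation step does not reconcile these two demands.
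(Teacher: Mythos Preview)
The paper does not give its own proof of this theorem: it is quoted as \cite[Thm.~4.8]{BezhanishviliGabelaiaEtAl2019} and used as a black box. Your argument is correct and is in fact the natural way to establish the result, by restricting the dual equivalence of \cref{thm:isbell-gen} along the characterization ``$R$ is continuous $\iff$ $\Box_R$ is a $\c$-morphism''. Both directions are sound: in the ``only if'' direction the witness $\Diamond_R=R^{-1}[-]$ works exactly as you say, and in the ``if'' direction your two applications of the $\c$-morphism inequalities, with the two-step interpolation $y_0\in W\subseteq\cl(W)\subseteq W'\subseteq\cl(W')\subseteq V$, correctly produce an open $\Diamond(W')$ sandwiched between $x_0$ and $R^{-1}[V]$ (the key uses of $\Diamond(\varnothing)=\varnothing$ and of $\Diamond(W')\in\O(X)$ are justified since $\Diamond$ is a join-preserving map between frames). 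Your closing remark that a single interpolation step does not suffice is also accurate: step~(a) needs an open set strictly containing $\cl(W)$, while step~(b) needs its closure to sit inside $V$.
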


Letting $\Diamond = \Box$, we can identify $\KRFrm$ with a wide subcategory of $\KRFrmC$. Thus, we arrive at the following diagram, where the hook arrows represent inclusions of wide subcategories and the horizontal arrows dual equivalences.
\[
\begin{tikzcd}[column sep=4em]
	\KRFrmP \arrow[r, leftrightarrow, "d"] & \KHausR \\
	\KRFrmC \arrow[u, hookrightarrow] \arrow[r, leftrightarrow, "d"] & \KHausC \arrow[u, hookrightarrow] \\
	\KRFrm \arrow[u, hookrightarrow] \arrow[r, leftrightarrow, "d"] & \KHaus \arrow[u, hookrightarrow]
\end{tikzcd}
\]

\begin{definition} \cite[Def.~2.4]{ABC22a}
Let $A,B$ be boolean algebras.
A relation $S \subseteq A \times B$ is a \emph{subordination} if $S$ satisfies the following conditions for all $a,b \in A$ and $c,d\in B$:
\begin{enumerate}[label = (S\arabic*), ref = S\arabic*]
\item \label{S1} $0 \S 0$ and $1 \S 1$;
\item \label{S2} $a,b \S c$ implies $(a\vee b) \S c$; 
\item \label{S3} $a \S c,d$ implies $a \S (c\wedge d)$;  
\item \label{S4} $a\le b \S c\le d$ implies $a \S d$. 
\end{enumerate}
\end{definition}

\begin{remark}
The axioms \eqref{S1}--\eqref{S4} are equivalent to saying that $S$ is a bounded sublattice of $A \times B$ satisfying \eqref{S4}.
\end{remark}

When $A=B$, we say that $S$ is a \emph{subordination on} $A$. These were introduced in~\cite{BBSV17} as a counterpart of quasi-modal operators~\cite{Celani2001} and precontact relations~\cite{DV06,DuentschVakarelov2007}. As follows from \cite[Thm.~2.22]{BBSV17}, subordinations on $A$ correspond to closed relations $R$ on the Stone space of $A$. By \cite{Celani2001,DuentschVakarelov2007} (see also \cite[Lem.~4.6]{BBSV17}),  we can characterize reflexivity, symmetry, and transitivity of $R$ by the following axioms, where we write $\neg a$ for the complement of $a$ in $A$.

\begin{enumerate}[label = (S\arabic*), ref = S\arabic*]
\setcounter{enumi}{4}
\item \label{S5} $a \S b$ implies $a \le b$;
\item \label{S6} $a \S b$ implies $\neg b \S \neg a$;
\item \label{S7} $a \S b$ implies there is $c \in A$ with $a \S c$ and $c \S b$.
\end{enumerate}

Following the modal logic nomenclature, the pairs $(B,S)$ where $B$ is a boolean algebra and $S$ is a subordination on $B$ satisfying \eqref{S5}--\eqref{S7} were called \emph{$\mathsf{S5}$-subordination} algebras in \cite{ABC22a}.

These algebras were first introduced in \cite{Mee66}, where the notion of a proximity on a set was generalized to an arbitrary boolean algebra. 
Further generalizations include proximity lattices \cite{War75,Smy92}, proximity algebras \cite{GK81}, and proximity frames \cite{BH14}. We point out that $\mathsf{S5}$-subordination algebras are exactly the proximity algebras of \cite{GK81} where the underlying Heyting algebra is a boolean algebra.

\begin{definition} 
Let $\mathbf{B}=(B, S)$ be an $\mathsf{S5}$-subordination algebra.
\begin{enumerate}
\item\cite[Def.~1.1.1]{deV62} We call $\mathbf{B}$ a \emph{compingent algebra} if $S$ satisfies the following axiom:
	\begin{enumerate}[label = (S\arabic*), start= 8, ref = S\arabic*]
		\item \label{S8} If $a \neq 0$, then there is $b \neq 0$ with $b \S a$.
	\end{enumerate}
\item\cite[Def.~3.2]{Bez10} We call $\mathbf{B}$ a \emph{de Vries algebra} if $\mathbf{B}$ is a compingent algebra and $B$ is a complete boolean algebra. 
\end{enumerate}
\end{definition}

\begin{remark}
As was pointed out in \cite[Prop.~7.4]{BH14}, de Vries algebras are exactly those proximity frames where the frame is boolean. 
\end{remark}

A {\em de Vries morphism} between de Vries algebras is a map $f\colon B_1 \to B_2$ satisfying the following conditions:
	\begin{enumerate}[label = (M\arabic*), ref = M\arabic*]
		\item\label{M1} $f(0) = 0$;
		\item\label{M2} $f(a \land b) = f(a) \land f(b)$;
		\item\label{M3} $a \S_1 b$ implies $\lnot f(\lnot a) \S_2 f(b)$;
		\item\label{M4} $f(a) = \bigvee \{f(b) \mid b \S_1 a\}$.
	\end{enumerate}
	The composition of two de Vries morphisms $f \colon B_1 \to B_2$ and $g \colon B_2 \to B_3$ is the de Vries morphism $g * f \colon B_1 \to B_3$ given by
	\[
		(g * f)(a) = \bigvee \{ gf(b) \mid b \S_1 a \}
	\]
	for each $a\in B_1$. \label{def-dev} Let $\dev$ be the category of de Vries algebras and de Vries morphisms, where identity morphisms are identity functions and composition is defined as above.

\begin{theorem} [de Vries duality] \label{t:de-Vries-duality}
	$\dev$ is dually equivalent to $\KHaus$. 
\end{theorem}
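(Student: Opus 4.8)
The plan is to construct a contravariant functor $\mathrm{pt}\colon\dev\to\KHaus$ quasi-inverse to the functor $\RO$ recalled above. Given a de Vries algebra $\mathbf{B}=(B,S)$, I would call a filter $F$ of $B$ \emph{round} if for every $a\in F$ there is $b\in F$ with $b\S a$, and take $\mathrm{pt}(\mathbf{B})$ to be the set of \emph{ends} of $\mathbf{B}$ (the maximal round filters), topologized by declaring $\{\eta(a)\mid a\in B\}$, with $\eta(a)=\{F\in\mathrm{pt}(\mathbf{B})\mid a\in F\}$, to be a basis of opens; this really is a basis since $\eta(a)\cap\eta(b)=\eta(a\wedge b)$, $\eta(0)=\emptyset$, and $\eta(1)=\mathrm{pt}(\mathbf{B})$. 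On a de Vries morphism $f\colon B_1\to B_2$ I would define $\mathrm{pt}(f)\colon\mathrm{pt}(B_2)\to\mathrm{pt}(B_1)$ by
\[
\mathrm{pt}(f)(G)=\{a\in B_1\mid f(b)\in G\text{ for some }b\in B_1\text{ with }b\S a\};
\]
using \eqref{S4}, \eqref{S7} and \eqref{M1}--\eqref{M2} this is a proper round filter, and \eqref{M3}--\eqref{M4} together with the maximality of $G$ force it to be an end and $\mathrm{pt}(f)$ to be continuous. Crucially, the nonstandard composition $g\ast f$ of de Vries morphisms is designed precisely so that $\mathrm{pt}(g\ast f)=\mathrm{pt}(f)\circ\mathrm{pt}(g)$, the supremum over all $b$ with $b\S_1 a$ in the definition of $\ast$ being exactly what the formula for $\mathrm{pt}$ absorbs.

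Next I would check that $\mathrm{pt}(\mathbf{B})$ is a compact Hausdorff space. Compactness is the usual maximal-round-filter argument: a subfamily of $\{\eta(a)\}$ with the finite intersection property generates, via \eqref{S2}, \eqref{S3} and \eqref{S7}, a proper round filter, which by Zorn's lemma extends to an end meeting every member of the subfamily. For the Hausdorff property, two distinct ends $F,G$ are incomparable by maximality, say $a\in F\setminus G$; interpolating $b\S c\S a$ with $b\in F$ using \eqref{S7} and invoking \eqref{S6}, one separates $F$ and $G$ by the disjoint basic opens $\eta(c)$ and $\eta(\neg b)$. Note that this step uses the $\mathsf{S5}$ axioms but not the compingency axiom \eqref{S8}.

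The core of the argument is the representation theorem: $a\mapsto\eta(a)$ is a de Vries isomorphism of $\mathbf{B}$ onto $(\RO(\mathrm{pt}(\mathbf{B})),\prec)$. That each $\eta(a)$ is regular open, that $a\mapsto\eta(a)$ respects the Boolean operations, and that $a\S b$ iff $\cl\eta(a)\subseteq\eta(b)$, are checked from \eqref{S1}--\eqref{S7}; injectivity is exactly where \eqref{S8} enters, as it forces the ends to separate elements of $B$; and surjectivity onto \emph{all} regular opens is where compactness of $\mathrm{pt}(\mathbf{B})$ is used. In the reverse direction, for $X\in\KHaus$ I would show that $x\mapsto\{U\in\RO(X)\mid x\in U\}$ is a homeomorphism $X\to\mathrm{pt}(\RO(X))$: regularity of $X$ makes each such set a round filter, in fact an end, and makes the map open; the Hausdorff property gives injectivity; and compactness gives surjectivity onto ends. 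Both families of isomorphisms are then seen to be natural once the actions of $\RO$ and $\mathrm{pt}$ on morphisms have been settled.

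I expect the main obstacle to be the representation theorem, and within it two points in particular: the compactness argument showing that $a\mapsto\eta(a)$ is \emph{onto} $\RO(\mathrm{pt}(\mathbf{B}))$, and the verification that $\mathrm{pt}$ (and likewise $\RO$, which must be checked to preserve $\ast$) sends morphisms to morphisms functorially — delicate because $\dev$ is not a category of algebras with structure-preserving maps in the usual sense, so the peculiar definitions of de Vries morphism and of the composition $\ast$ must be reconciled with ordinary composition of continuous functions. Everything else reduces to systematic bookkeeping with the axioms \eqref{S1}--\eqref{S8} and \eqref{M1}--\eqref{M4}.
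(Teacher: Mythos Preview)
The paper does not prove this theorem; it is stated as a background result with the citation \cite{deV62} and no argument is given. Your proposal is a correct outline of the classical de Vries proof via ends (maximal round filters), and is essentially the argument in the cited source.

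One point to sharpen: in the representation step you attribute surjectivity of $a\mapsto\eta(a)$ onto $\RO(\mathrm{pt}(\mathbf{B}))$ to compactness of the dual space, but the decisive ingredient is the \emph{completeness} of $B$ (this is precisely where de Vries algebras differ from compingent algebras); for an arbitrary regular open $U$ one takes $a=\bigvee\{b\in B\mid \eta(b)\subseteq U\}$ and then uses compactness together with \eqref{S8} to show $\eta(a)=U$. Also, your Hausdorff argument is slightly loose: from $a\in F\setminus G$ one uses maximality of $G$ to produce $b$ with $b\S a$ and $\lnot b\in G$, then interpolates $b\S c\S a$; now $c\in F$ (since $a\in F$ is not automatic from $b\S c$, but rather from roundness of $F$ one first picks $b\in F$ with $b\S a$, and then $c\ge b$ gives $c\in F$), and $\lnot b\in G$ together with $\eta(c)\cap\eta(\lnot b)=\varnothing$ (from $b\S c$) separates them. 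These are routine fixes and your overall plan is sound.
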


In \cite{BezhanishviliGabelaiaEtAl2019} de Vries duality was generalized to a duality for $\KHausC$. For this, the notion of a de Vries additive map from \cite{BBH15} was utilized. We will instead work with the equivalent notion of a de Vries multiplicative map.

\begin{definition}
A map $\Box \colon B_1\to B_2$ between de Vries algebras is {\em de Vries multiplicative} if $\Box 1 = 1$ and for all $a,b,c,d\in B_1$, we have
\[
a \S_1 b \mbox{ and } c \S_1 d \mbox{ imply } (\Box a \wedge \Box c) \S_2 \Box(b \wedge d). 
\]
We call $\Box$ {\em lower continuous} if in addition
\[
\Box a = \bigvee \{ \Box b \mid b \S_1 a\}
\]
for each $a\in B_1$. The composition of two such maps $\Box_1$ and $\Box_2$ is given by
\[
	(\Box_2 * \Box_1) a = \bigvee \{ \Box_2\Box_1 b \mid b \S_1 a \}.
\]
\label{def-devC}Let $\devC$ be the category of de Vries algebras and lower continuous de Vries multiplicative maps, where identity morphisms are identity functions and composition is defined as above. 
\end{definition} 

\begin{remark}\label{rem:box diamond upper lower}
\hfill\begin{enumerate}
\item The results of \cite{BezhanishviliGabelaiaEtAl2019} are stated using de Vries additive maps  that are lower continuous, where we recall that $\Diamond\colon B_1\to B_2$ is de Vries additive if
$\Diamond 0 = 0$ and $a \S_1 b$ and $c \S_1 d$ imply $\Diamond (a \vee c) \S_2 (\Diamond b \vee \Diamond d)$
for all $a,b,c,d\in B_1$, and it
is lower continuous if
$\Diamond a = \bigvee \{ \Diamond b \mid b \S_1 a\}$
for all $a\in B_1$.
To simplify proofs (see, e.g., \cref{lem:continuity BoxT and TBox}), we will work with $\Box$ instead of $\Diamond$.
\item\label{rem:box diamond upper lower:item2}
As observed in~\cite[Rem.~4.11]{BezhanishviliGabelaiaEtAl2019}, working with lower continuous de Vries additive maps is equivalent to working with de Vries multiplicative maps that are upper continuous, i.e.\ maps $\Box$ that satisfy $\Box a = \bigwedge \{ \Box b \mid a \S b\}$. Analogously, working with de Vries multiplicative lower continuous maps is equivalent to working with de Vries additive maps that are upper continuous.
\item 
By a slight adjustment of the proofs of \cite[Thms.~4.21, 4.22]{BBH15} it is not difficult to show that the category of de Vries algebras and de Vries additive upper continuous maps between them is equivalent to the category of de Vries algebras and de Vries additive lower continuous maps between them.
Similarly, one can show that $\devC$ is equivalent to the category of de Vries algebras and upper continuous de Vries multiplicative maps between them, and hence to the category of de Vries algebras and lower continuous de Vries additive maps between them. Thus, the results of~\cite{BezhanishviliGabelaiaEtAl2019} apply to our setting. 
\end{enumerate}
\end{remark}

\begin{theorem}[{\cite[Thm.~4.14]{BezhanishviliGabelaiaEtAl2019}}] \label{t:continuous}
$\devC$ is dually equivalent to $\KHausC$.
\end{theorem}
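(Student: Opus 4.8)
The plan is to prove this by adapting de Vries's argument to the relational setting: I would build a contravariant functor $\RO \colon \KHausC \to \devC$ extending the de Vries functor of \cref{t:de-Vries-duality}, and exhibit a quasi-inverse. On objects, $\RO(X)$ is unchanged from de Vries duality, namely the de Vries algebra of regular open subsets of $X$ with $U \prec V \iff \cl U \subseteq V$. On a continuous relation $R \subseteq X \times Y$ I would set $\RO(R) = \Box_R \colon \RO(Y) \to \RO(X)$ by
\[
\Box_R(V) = \int\cl\bigl( X \setminus R^{-1}[Y \setminus V] \bigr).
\]
Since $Y \setminus V$ is closed and $R$ is a \emph{closed} relation, $R^{-1}[Y\setminus V]$ is closed, so $X \setminus R^{-1}[Y \setminus V]$ is open and $\Box_R$ takes values in $\RO(X)$; for $R$ the graph of a continuous function $f$ this reduces to the usual $\RO(f)(V) = \int\cl f^{-1}[V]$.

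The first task is to verify that $\Box_R$ is a lower continuous de Vries multiplicative map. Checking $\Box_R 1 = 1$ and $\Box_R a \wedge \Box_R c \prec \Box_R(b \wedge d)$ whenever $a \prec b$ and $c \prec d$ should be a direct manipulation of interiors and closures, using that $R$ is closed together with regularity and compactness of $X$. Lower continuity, $\Box_R V = \bigvee\{\Box_R W \mid W \prec V\}$, is where the \emph{continuity} of $R$ is used essentially: it is precisely what guarantees that $R^{-1}$ of the directed union $V = \bigcup\{W \mid W \prec V\}$ is approximated from the inside, which is the extra content separating $\KHausC$ from $\KHausR$. (Equivalently, by \cref{rem:box diamond upper lower}, one may track the de Vries additive lower continuous companion $\Diamond_R(U) = \int\cl R^{-1}[U]$, for which the two $\mathsf{c}$-morphism inequalities with $\Box_R$ are of the same flavour.)

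Next I would check functoriality: $\Box_{\Delta_X} = \id_{\RO(X)}$ for the diagonal relation $\Delta_X$, and $\Box_{S \circ R} = \Box_S * \Box_R$ with $*$ the twisted composition of \cref{def-devC}. For the quasi-inverse, send a de Vries algebra $B$ to its de Vries dual space $\mathsf{pt}(B)$, compact Hausdorff by \cref{t:de-Vries-duality}, and a lower continuous de Vries multiplicative map $\Box \colon B_1 \to B_2$ to the relation $R_\Box \subseteq \mathsf{pt}(B_2) \times \mathsf{pt}(B_1)$ determined by $x \mathrel{R_\Box} y \iff \{a \mid \Box a \in x\} \subseteq y$; one then checks that $R_\Box$ is continuous — closed from de Vries multiplicativity, continuous from lower continuity of $\Box$ — and that $\Box \mapsto R_\Box$ respects the twisted compositions. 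The de Vries representation isomorphisms $B \cong \RO(\mathsf{pt}(B))$ and $X \cong \mathsf{pt}(\RO(X))$ are already natural for honest continuous functions, so it remains to verify naturality for the relational morphisms on both sides, which delivers the dual equivalence.

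I expect the main obstacle to be the interplay between the \emph{twisted} composition and ordinary relation composition, intertwined with the continuity bookkeeping. Since $(S\circ R)^{-1}[G] = R^{-1}[S^{-1}[G]]$, proving $\Box_{S\circ R} = \Box_S * \Box_R$ forces one to show that the information discarded by the repeated $\int\cl$ when composing $\Box_S$ after $\Box_R$ is exactly restored by the round join $\bigvee\{\Box_S\Box_R b \mid b \prec a\}$; this is where compactness of the spaces and the compingent axiom \eqref{S8} enter, and it is also where one must confirm that the continuity hypothesis on $R$ (resp.\ lower continuity of $\Box$) is the right condition pairing $\KHausC$ with $\devC$ rather than $\KHausR$ with its algebraic counterpart. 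A shorter alternative route is available: by \cref{thm:KRFrmC-dual-KHausC} it suffices to produce an equivalence $\devC \simeq \KRFrmC$, obtained by promoting the equivalence between $\dev$ and $\KRFrm$ (via regular elements and round ideals) to morphisms and checking that lower continuous de Vries multiplicative maps correspond exactly to $\mathsf{c}$-morphisms — but essentially the same composition-matching difficulty resurfaces there.
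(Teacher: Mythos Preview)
The paper does not prove this statement: \cref{t:continuous} sits in the preliminaries and is simply quoted from \cite[Thm.~4.14]{BezhanishviliGabelaiaEtAl2019} with no argument given. So there is no ``paper's own proof'' to compare your attempt against.

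That said, your outline is a reasonable reconstruction of the kind of argument the cited source carries out, and the alternative route you mention at the end --- passing through $\KRFrmC$ via \cref{thm:KRFrmC-dual-KHausC} --- is in fact closer to how the present paper later organises the picture (see \cref{thm: continuous case,thm: continuous DeV}). Two cautions. First, what you wrote is explicitly a plan with acknowledged gaps (``should be a direct manipulation'', ``I expect the main obstacle to be\ldots''), not a proof; the composition-matching step $\Box_{S\circ R} = \Box_S * \Box_R$ and the naturality squares genuinely require work and are where most errors occur. Second, be mindful of the variance conventions flagged in \cref{rem:box diamond upper lower}: \cite{BezhanishviliGabelaiaEtAl2019} is written for de Vries \emph{additive} lower continuous maps, whereas $\devC$ here uses de Vries \emph{multiplicative} lower continuous maps, and the translation between the two (via $\Box \leftrightarrow \neg\Diamond\neg$ and upper/lower continuity) is part of what makes the citation go through.
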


In \cite{BezhanishviliGabelaiaEtAl2019} obtaining a de Vries like duality for $\KHausR$ was left open. This question was resolved in \cite{ABC22a} by working with special subordination relations between de Vries algebras.
To introduce them, we require the following definition of compatibility.

\begin{definition}\label{def:compatible rel}
For $i=1,2$ let $R_i$ be a binary relation on a set $X_i$. We call a relation $T \colon X_1 \to X_2$ \emph{compatible} if 
$R_2\circ T=T=T\circ R_1$. 
\[
	\begin{tikzcd}
		X_1 \arrow{r}{T} \arrow{rd}{T} \arrow[swap]{d}{R_1} & X_2 \arrow{d}{R_2}\\
		X_1 \arrow[swap]{r}{T} & X_2
	\end{tikzcd}
\]
\end{definition}

Let $\SubSfive$ be the category of $\mathsf{S5}$-subordination algebras and compatible subordinations between them, where the composition of morphisms is the usual composition of relations, and the identity morphism on an $\mathsf{S5}$-subordination algebra $(B,S)$ is the relation $S$. Let $\devS$ be the full subcategory of $\SubSfive$ consisting of de Vries algebras.

To connect $\KHausR$ with $\SubSfive$, it is convenient to first obtain a Stone-like representation of $\mathsf{S5}$-subordination algebras.

\begin{definition}\label{def-StoneER and GleR}

\mbox{}\begin{enumerate}
		\item \label{def-StoneER}
		An \emph{$\mathsf{S5}$-subordination space} is a pair $(X, E)$ where $X$ is a Stone space and $E$ is a closed equivalence relation on $X$.
		We let $\StoneER$ be the category whose objects are $\mathsf{S5}$-subordination spaces and whose morphisms are compatible closed relations between them.
		
		\item	\label{def-GleR}
		A \emph{Gleason space} is an $\mathsf{S5}$-subordination space $(X,E)$ such that $X$ is \emph{extremally disconnected} (i.e., the closure of an open set is open) and $E$ is \emph{irreducible} (i.e., if $F$ is a proper closed subset of $X$, then so is $E[F]$).
		We let $\GleR$ 
		be the full subcategory of $\StoneER$ whose objects are Gleason spaces.
	\end{enumerate}
\end{definition}

\begin{theorem} [{\cite[Cors.~3.14, 4.7]{ABC22a}}] \label{thm:equiv-ABC}
	$\KHausR$, $\StoneER$, $\GleR$, $\SubSfive$, and $\devS$ are equivalent categories.
	\[
	\begin{tikzcd}[row sep=10pt]
		& \StoneER \arrow[r, leftrightarrow]  & \SubSfive \\
		\KHausR \arrow[ur, leftrightarrow] \arrow[dr, leftrightarrow] & & \\
		& \GleR \arrow[uu, hookrightarrow] \arrow[r, leftrightarrow] & \devS \arrow[uu, hookrightarrow]
	\end{tikzcd}
	\]
\end{theorem}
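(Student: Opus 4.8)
The plan is to obtain every equivalence in the diagram from two functors: a Stone-type representation sending an $\mathsf{S5}$-subordination algebra to an $\mathsf{S5}$-subordination space, and a quotient functor $\mathcal{Q}\colon\StoneER\to\KHausR$ given on objects by $(X,E)\mapsto X/E$. Because the morphisms in all five categories are relations composed by relation composition, functoriality will reduce throughout to checking compatibility with relation composition and with the converse operation $R\mapsto\conv{R}$; with respect to the converse, $\SubSfive$, $\StoneER$, and $\KHausR$ are allegories (cf.\ \cite[Lem.~3.6]{ABC22a}, \cite{FS90}), and in particular self-dual, a fact I would use repeatedly to turn contravariant equivalences into covariant ones. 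The first step is the equivalence $\SubSfive\simeq\StoneER$. Classical Stone duality, the dual equivalence $\Stone\simeq\ba^{\op}$ via $X\mapsto\Clop(X)$, extends to a correspondence between closed relations between Stone spaces and subordinations between the dual boolean algebras, arranged so that relation composition matches subordination composition (cf.\ \cite[Thm.~2.22]{BBSV17}, \cite{Cel18,KurzMoshierEtAl2023}); a priori this is a dual equivalence between the category $\StoneR$ of Stone spaces with closed relations and the category $\BAS$ of boolean algebras with subordinations, but $\StoneR$ is self-dual via the converse, so one obtains a covariant equivalence $\StoneR\simeq\BAS$. Under it, a subordination $S$ on a boolean algebra $B$ becomes a closed relation $R_S$ on the Stone space of $B$, and the axioms \eqref{S5}, \eqref{S6}, \eqref{S7} translate exactly into reflexivity, symmetry, and transitivity of $R_S$, so $\mathsf{S5}$-subordination algebras correspond precisely to $\mathsf{S5}$-subordination spaces. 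A compatible subordination $T$ between $\mathsf{S5}$-subordination algebras (satisfying $S_2\circ T\circ S_1=T$) goes to a compatible closed relation between the associated spaces (satisfying $E_2\circ T\circ E_1=T$), and the identity morphisms $S$ and $E$ correspond; restricting $\StoneR\simeq\BAS$ to the relevant wide subcategories then gives $\SubSfive\simeq\StoneER$.

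Next I would prove $\StoneER\simeq\KHausR$. On objects, $\mathcal{Q}(X,E)=X/E$ is compact Hausdorff because $E$ is a closed equivalence relation on a compact Hausdorff space; on a morphism $T\colon(X_1,E_1)\to(X_2,E_2)$, set $\mathcal{Q}(T)=(\pi_1\times\pi_2)[T]$ where $\pi_i\colon X_i\to X_i/E_i$ are the quotient maps. This relation is closed since $\pi_1\times\pi_2$, being a continuous map from a compact space to a Hausdorff space, is closed, and $\mathcal{Q}$ preserves identities (as $\mathcal{Q}(E)=\Delta_{X/E}$) and composition (compatibility of the relations is precisely what makes composition go through here). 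The compatibility condition is equivalent to $T=(\pi_1\times\pi_2)^{-1}[\mathcal{Q}(T)]$, which gives faithfulness; fullness holds because for any closed $R\subseteq(X_1/E_1)\times(X_2/E_2)$ the full preimage $(\pi_1\times\pi_2)^{-1}[R]$ is a compatible closed relation that $\mathcal{Q}$ sends to $R$. For essential surjectivity I would use Gleason covers: every compact Hausdorff space $Y$ admits an irreducible continuous surjection $\varepsilon_Y\colon\widehat{Y}\to Y$ where $\widehat{Y}$ is the (extremally disconnected, hence Stone) space dual to the complete boolean algebra $\RO(Y)$, so the kernel pair $E_Y=\{(x,x')\mid\varepsilon_Y(x)=\varepsilon_Y(x')\}$ is a closed irreducible equivalence relation, $(\widehat{Y},E_Y)$ is a Gleason space, and $\mathcal{Q}(\widehat{Y},E_Y)\cong Y$. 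Thus $\mathcal{Q}$ is an equivalence, with quasi-inverse $Y\mapsto(\widehat{Y},E_Y)$ on objects and $R\mapsto(\varepsilon_{Y_1}\times\varepsilon_{Y_2})^{-1}[R]$ on morphisms. Finally, for an arbitrary $(X,E)$, writing $\pi\colon X\to X/E$ and $\varepsilon\colon\widehat{X/E}\to X/E$, the closed relation $\{(x,z)\mid\pi(x)=\varepsilon(z)\}$ together with its converse is an isomorphism $(X,E)\cong(\widehat{X/E},E_{X/E})$ in $\StoneER$; hence $\GleR\hookrightarrow\StoneER$ is essentially surjective, so an equivalence, and $\mathcal{Q}$ corestricts to $\GleR\simeq\KHausR$.

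It remains to assemble the pieces. Under Stone duality, $B$ is complete iff its Stone space is extremally disconnected, and \eqref{S8} corresponds to irreducibility of $R_S$ (cf.\ \cite{ABC22a}), so de Vries algebras correspond exactly to Gleason spaces; since the equivalence $\SubSfive\simeq\StoneER$ of the first step is full, it restricts to an equivalence $\devS\simeq\GleR$ of the corresponding full subcategories. Combining $\SubSfive\simeq\StoneER$, $\StoneER\simeq\KHausR$ (which corestricts to $\GleR\simeq\KHausR$), and $\devS\simeq\GleR$ then yields the entire diagram.

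The main obstacle is the second step. Essential surjectivity of $\mathcal{Q}$ is the existence of Gleason covers of compact Hausdorff spaces --- a classical but genuinely non-trivial input, and the reason the proof is not choice-free (choice already enters through Stone duality). The conceptually delicate point is that $\GleR\hookrightarrow\StoneER$ is an equivalence even though it is far from surjective on objects: any two $\mathsf{S5}$-subordination spaces lying over a fixed compact Hausdorff quotient are isomorphic in $\StoneER$, precisely because an isomorphism there is a closed relation with a closed relational inverse rather than a homeomorphism. Verifying that $\{(x,z)\mid\pi(x)=\varepsilon(z)\}$ really is a $\StoneER$-isomorphism --- in particular that composing it with its converse returns the identity morphism $E$ and not merely the diagonal --- is where the allegorical nature of the morphisms does the essential work, and is the step I would expect to demand the most care.
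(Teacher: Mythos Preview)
Your proposal is correct and follows essentially the same route as the cited paper \cite{ABC22a}, whose functors are summarized in \cref{rem:functors equivalences}: the equivalence $\SubSfive\simeq\StoneER$ via the $\Clop$/$\Ult$ pair (Stone duality extended to subordination/closed relations, with \eqref{S5}--\eqref{S7} matching reflexivity, symmetry, transitivity), the equivalence $\StoneER\simeq\KHausR$ via the quotient functor $\mathcal{Q}$ with quasi-inverse the Gleason cover functor $\G$, and the restrictions to $\devS\simeq\GleR$ via completeness $\leftrightarrow$ extremal disconnectedness and \eqref{S8} $\leftrightarrow$ irreducibility. Your remark that the argument is not choice-free is also exactly the point the present paper makes, motivating the alternative MacNeille-completion proof of $\SubSfive\simeq\devS$ in \cref{sec:MacNeille}.
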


To make the paper self-contained, we briefly describe the functors yielding some of the equivalences of \cref{thm:equiv-ABC}. 

\begin{remark}\label{rem:functors equivalences}
\mbox{}
\begin{enumerate}
	\item \label{i:functor-Q}
	The functor $\mathcal{Q} \colon \StoneER \to \KHausR$ maps an object $(X, E)$ to the quotient space $X/E$, and a morphism $R \colon (X_1, E_1) \to (X_2, E_2)$ to the morphism $\mathcal{Q}(R) \colon \mathcal{Q}(X_1,E_1) \to \mathcal{Q}(X_2,E_2)$ 
	given by $[x]_{E_1} \mathrel{\mathcal{Q}(R)} [y]_{E_2}$ iff $x \mathrel{R} y$ (i.e., $\mathcal{Q}(R) = \pi_2 \circ R \circ \conv{\pi_1}$, where $\pi_1$ and $\pi_2$ are the quotient maps).
	\[
	\begin{tikzcd}
		X_1 \arrow{r}{R} \arrow[swap]{d}{\pi_1} & X_2 \arrow{d}{\pi_2}\\
		X_1/E_1 \arrow[swap]{r}{\mathcal{Q}(R)} & X_2/E_2
	\end{tikzcd}
\]
	\item \label{i:functor-G}
	A quasi-inverse of $\mathcal{Q}$ is given by the Gleason cover functor $\G \colon \KHausR \to \StoneER$ which associates to each compact Hausdorff space $X$ the pair $\G(X)=(\widehat{X}, E)$ where $g \colon \widehat{X} \to X$ is the Gleason cover of $X$ and $x \E y$ iff $g(x)=g(y)$ (for Gleason covers see, e.g., \cite[Sec.~III.3.10]{Joh82}). 
	It also maps a closed relation $R \colon X_1 \to X_2$ to the relation 
	$\G(R) \colon \G(X_1) \to \G(X_2)$ given by $x \mathrel{\G(R)} y$ iff $g_1(x) \mathrel{R} g_2(y)$ (i.e., $\G(R) = \conv{g_2} \circ R \circ g_1$).
		\[
		\begin{tikzcd}
			\widehat{X_1} \arrow[swap]{d}{g_1} \arrow{r}{\G(R)} & \widehat{X_2} \arrow{d}{g_2}\\
			X_1 \arrow[swap]{r}{R}  & X_2 
		\end{tikzcd}
		\]		
	\item The functor $\mathcal{G}$ is also a quasi-inverse of the restriction of the functor $\mathcal{Q}$ to $\GleR$.
	\item\label{i:GleR StoneER equivalence} The inclusion of $\GleR$ into $\StoneER$ is an equivalence whose quasi-inverse is the composition $\G \circ \mathcal{Q}$.
	\item \label{i:functor-Clop}
	The functor $\Clop \colon \StoneER \to \SubSfive$ maps an object $(X, E)$ to $(B, S_E)$, where $B$ is the boolean algebra of clopen subsets of $X$ and $S_E$ is the binary relation on $B$ given by $U \S_E V$ iff $E[U] \subseteq V$. Also, $\Clop$ maps a morphism $R \colon (X_1, E_1) \to (X_2, E_2)$ to the compatible subordination relation $S_R \colon \Clop(X_1,E_1) \to \Clop(X_2,E_2)$ given by $U \S_R V$ iff $R[U] \subseteq V$.
	\item \label{i:functor-Ult}
	A quasi-inverse of $\Clop$ is given by the ultrafilter functor $\Ult \colon \SubSfive \to \StoneER$ which associates to each 
	object $(B,S)$ the pair $\Ult(B,S)=(X,R_S)$ where $X$ is the Stone space of ultrafilters of $B$ and $x \R_S y$ iff $S[x] \subseteq y$. We call $(X,R_S)$ the \emph{$\mathsf{S5}$-subordination space} of $(B,S)$. A morphism $T \colon (B_1,S_1) \to (B_2,S_2)$ is mapped by $\Ult$ to the morphism $R_T \colon \Ult(B_1,S_1) \to \Ult(B_2,S_2)$ given by $x \R_T y$ iff $T[x] \subseteq y$. 
		\item The restrictions $\Clop \colon \GleR \to \devS$ and $\Ult \colon \devS \to \GleR$ are also quasi-inverses of each other.
\end{enumerate}
\end{remark}

It follows from \cref{thm:isbell-gen,thm:equiv-ABC} that $\SubSfive$ is dually equivalent to $\KRFrmP$ and equivalent to $\devS$. 
The main contribution of this paper is to give direct choice-free proofs of these results by generalizing ideal and MacNeille completions of boolean algebras to the setting of $\mathsf{S5}$-subordination algebras, to fill in the empty boxes of the following diagram, and to show that it commutes up to natural isomorphism.
The unlabeled horizontal arrows in the diagram represent equivalences of categories while the ones labeled with the letter $d$ represent dual equivalences. The vertical arrows are inclusions of wide subcategories.

\begin{figure}[h]
\[
\begin{tikzcd}[column sep=0.90cm]
\SubSfive \arrow[r, leftrightarrow] & \devS \arrow[rr, leftrightarrow, "d"] & & \KRFrmP \arrow[r, leftrightarrow, "d"] & \KHausR \arrow[r, leftrightarrow] & \StoneER \arrow[r, leftrightarrow] & \GleR \\
\boxed{\phantom{\dev}} \arrow[u, hookrightarrow] \arrow[r, leftrightarrow] & \boxed{\phantom{\dev}} \arrow[u, hookrightarrow] \arrow[r, leftrightarrow, "d"] & \devC \arrow[r, leftrightarrow] & \KRFrmC \arrow[u, hookrightarrow] \arrow[r, leftrightarrow, "d"] & \KHausC \arrow[u, hookrightarrow] \arrow[r, leftrightarrow] & \boxed{\phantom{\dev}} \arrow[u, hookrightarrow] \arrow[r, leftrightarrow] & \boxed{\phantom{\dev}} \arrow[u, hookrightarrow] \\
\boxed{\phantom{\dev}} \arrow[u, hookrightarrow] \arrow[r, leftrightarrow] & \boxed{\phantom{\dev}} \arrow[u, hookrightarrow] \arrow[r, leftrightarrow, "d"] & \dev \arrow[u, hookrightarrow] \arrow[r, leftrightarrow] & \KRFrm \arrow[u, hookrightarrow] \arrow[r, leftrightarrow, "d"] & \KHaus \arrow[u, hookrightarrow] \arrow[r, leftrightarrow] & \boxed{\phantom{\dev}} \arrow[u, hookrightarrow] \arrow[r, leftrightarrow] & \boxed{\phantom{\dev}} \arrow[u, hookrightarrow]
\end{tikzcd}
\]
\caption{}\label{fig:diagram1}
\end{figure}

\section{Round ideals of $\mathsf{S5}$-subordination algebras}\label{sec: id comple}

For a boolean algebra $B$, let $\mathcal I(B)$ be the set of ideals of $B$ ordered by inclusion. It is well known that $\mathcal I(B)$ is a frame, where $I\wedge J=I\cap J$ and $\bigvee I_\alpha$ is the ideal generated by $\bigcup I_\alpha$. Moreover, the compact elements of $\mathcal I(B)$ are the principal ideals. This in particular implies that $\mathcal I(B)$ is compact and regular.\footnote{The frame $\mathcal I(B)$ is even zero-dimensional because every element in $\mathcal I(B)$ is a join of complemented elements (see \cite{Ban89}).} 
In this section we generalize these results to the frame of round ideals of an $\mathsf{S5}$-subordination algebra. 

Round ideals have been extensively studied in pointfree topology and domain theory. In particular, it follows from \cite{War75,Smy92} that the round ideals of a proximity lattice form a stably compact frame. As we pointed out in the previous section, $\mathsf{S5}$-subordination algebras $(B,S)$ are exactly the proximity algebras of \cite{GK81} where the algebra $B$ is a boolean algebra. This additional feature allows us to show that the round ideals of $(B,S)$ form a compact regular frame. Moreover, associating with each $\mathsf{S5}$-subordination algebra its frame of round ideals defines a contravariant functor from $\SubSfive$ to $\KRFrmR$. In \cref{sec:MacNeille} we will show that this functor is in fact a dual equivalence. 

\begin{definition}
	Let $\mathbf B=(B,S)$ be an $\mathsf{S5}$-subordination algebra. We call an ideal $I$ of $B$ a {\em round ideal} if $a\in I$ implies $a\S b$ for some $b\in I$. Let $\RI(\mathbf B)$ be the set of round ideals of $\mathbf{B}$ ordered by inclusion.
\end{definition}

\begin{remark}\label{rem:round ideal}
\hfill
\begin{enumerate}
\item\label{rem:round ideal:item1}
It is straightforward to see that an ideal $I$ is round iff $I=S^{-1}[I]$, and that if $I$ is an ideal of $B$, then $S^{-1}[I]$ is a round ideal of $\mathbf B$.
\item\label{rem:round ideal:item2} The notion of a round filter is dual to that of a round ideal. Therefore, a filter $F$ is round iff $F=S[F]$, and if $F$ is a filter of $B$, then $S[F]$ is a round filter of $\mathbf B$.
\end{enumerate}
\end{remark}

Let $B$ be a boolean algebra and $X \subseteq B$. We denote by $U(X)$ the set of upper bounds of $X$, by $L(X)$ the set of lower bounds of $X$, and by $\neg X$ the set $\{ \neg x \mid x \in X \}$. It is well known that $U(X)$ is a filter, $L(X)$ is an ideal, $\neg\neg X=X$, and 
$X$ is a filter iff $\neg X$ is an ideal. Moreover, $\neg U(X)=L(\neg X)$ and $\neg L(X)=U(\neg X)$.

\begin{lemma}\label{lem:operators and negations}
Let $B$ be a boolean algebra and $S$ an $\mathsf{S5}$-subordination on $B$. If $X \subseteq B$, then 
$\neg S[X]=S^{-1}[\neg X]$.
\end{lemma}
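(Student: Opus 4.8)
The plan is to unwind both sides into explicit set-builder form and reduce the claimed equality to the observation that axiom \eqref{S6} is in fact an equivalence. By definition,
$\neg S[X] = \{\,\neg b \mid \exists a\in X \text{ with } a\S b\,\}$ and $S^{-1}[\neg X] = \{\,c\in B \mid \exists a\in X \text{ with } c\S\neg a\,\}$.
Since $\neg$ is an involution on $B$, applying \eqref{S6} to $a\S b$ gives $\neg b\S\neg a$, and applying it again to $\neg b\S\neg a$ returns $a=\neg\neg a\S\neg\neg b=b$; hence for all $a,b\in B$ we have $a\S b$ if and only if $\neg b\S\neg a$.

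With this in hand, the two inclusions are immediate. For $\neg S[X]\subseteq S^{-1}[\neg X]$, take an element $\neg b$ of the left-hand side, so $a\S b$ for some $a\in X$; then $\neg b\S\neg a$ by \eqref{S6}, and since $\neg a\in\neg X$ this witnesses $\neg b\in S^{-1}[\neg X]$. For the reverse inclusion, take $c\in S^{-1}[\neg X]$, so $c\S\neg a$ for some $a\in X$; then $a=\neg\neg a\S\neg c$ by \eqref{S6}, so $\neg c\in S[X]$ and therefore $c=\neg\neg c\in\neg S[X]$.

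I do not anticipate any genuine obstacle here: the only thing to keep track of is the bookkeeping of double negations, and the argument uses solely \eqref{S6} — it needs neither \eqref{S5} nor \eqref{S7}, nor even the subordination axioms \eqref{S1}--\eqref{S4}. (One could equally phrase the whole computation as the chain $\neg S[X]=\neg S[\neg\neg X]=S^{-1}[\neg X]$, using that $\neg\neg X = X$ and that, by the displayed equivalence, $b\in S[\neg Y]$ iff $\neg b\in S^{-1}[Y]$.)
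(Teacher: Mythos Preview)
Your proof is correct and follows essentially the same approach as the paper: both unwind the definitions and use that \eqref{S6} is an equivalence (via double negation) to match the two sides. The paper's proof is just a terser chain of iff's (``$a\in\neg S[X]$ iff $\exists x\in X$ with $x\S\neg a$ iff $\exists x\in X$ with $a\S\neg x$ iff $a\in S^{-1}[\neg X]$''), which is exactly the alternative phrasing you sketch in your final parenthetical.
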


\begin{proof}
We have that $a \in \neg S[X]$ iff there is $x \in X$ such that $x \S \neg a$. By  \eqref{S6} this is equivalent to the existence of $x \in X$ such that $a \S \neg x$, which means that $a \in S^{-1}[\neg X]$.
\end{proof}

\begin{theorem}\label{thm:Round ideals frame}
Let $\mathbf B$ be an $\mathsf{S5}$-subordination algebra.
\begin{enumerate}[label=\normalfont(\arabic*), ref = \arabic*]
\item\label{thm:Round ideals frame:item1} $\RI(\mathbf B)$ is a subframe of $\mathcal I(\mathbf B)$.
\item\label{thm:Round ideals frame:item2} If $I\in\RI(\mathbf{B})$, then $I^*=S^{-1}[\neg U(I)] = \neg S[U(I)]$.
\item\label{thm:Round ideals frame:item3} The well-inside relation on $\RI(\mathbf B)$ is given by $I \prec J$ iff $U(I) \cap J \neq \varnothing$.
\item\label{thm:Round ideals frame:item4} $\RI(\mathbf{B})$ is compact and regular.
\end{enumerate}
\end{theorem}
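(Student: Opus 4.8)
The plan is to prove the four assertions in the order stated, throughout exploiting that $\RI(\mathbf{B})$ is a subset of the frame $\mathcal I(\mathbf{B})$ and translating the subordination axioms \eqref{S1}--\eqref{S7} into statements about round ideals, their pseudocomplements, and the well-inside relation. The tools I would use repeatedly are the elementary facts recorded before the statement (in particular $\neg\neg X=X$, $U(X)$ a filter, $L(X)$ an ideal, and $\neg U(X)=L(\neg X)$), \cref{lem:operators and negations} (i.e.\ $\neg S[X]=S^{-1}[\neg X]$), and the two observations in \cref{rem:round ideal}: an ideal $I$ is round iff $I=S^{-1}[I]$, and $S^{-1}[J]$ is always a round ideal when $J$ is an ideal. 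I also use the recalled fact that $\mathcal I(\mathbf{B})$ is a compact frame whose meets are intersections.

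For part~(1), I would check that $\RI(\mathbf{B})$ is closed under the frame operations of $\mathcal I(\mathbf{B})$. The bottom $\{0\}$ is round by \eqref{S1}, and the top $B$ is round because $a\le 1\S 1\le 1$ gives $a\S 1$ via \eqref{S4}. If $a\in I\cap J$ with $a\S b\in I$ and $a\S c\in J$, then $a\S(b\wedge c)$ by \eqref{S3} with $b\wedge c\in I\cap J$, so $\RI(\mathbf{B})$ is closed under binary meets. For a join $\bigvee_\alpha I_\alpha$ formed in $\mathcal I(\mathbf{B})$, any $a\in\bigvee_\alpha I_\alpha$ lies below some $a_1\vee\dots\vee a_n$ with $a_i\in I_{\alpha_i}$; choosing $b_i\in I_{\alpha_i}$ with $a_i\S b_i$ and putting $b=b_1\vee\dots\vee b_n\in\bigvee_\alpha I_\alpha$, \eqref{S4} gives $a_i\S b$ for each $i$, \eqref{S2} gives $(a_1\vee\dots\vee a_n)\S b$, and \eqref{S4} once more gives $a\S b$. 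Hence $\RI(\mathbf{B})$ is a subframe of $\mathcal I(\mathbf{B})$; since its top and joins are computed in $\mathcal I(\mathbf{B})$, compactness of $\RI(\mathbf{B})$ is inherited from that of $\mathcal I(\mathbf{B})$, which already yields half of part~(4).

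For part~(2), first note $S^{-1}[\neg U(I)]=\neg S[U(I)]$ by \cref{lem:operators and negations}, and that this is a round ideal since $\neg U(I)=L(\neg I)$ is an ideal. As the meets of $\RI(\mathbf{B})$ are intersections and its least element is $\{0\}$, the pseudocomplement $I^*$ equals the join of all round ideals $J$ with $J\cap I=\{0\}$, so it suffices to prove $S^{-1}[\neg U(I)]$ is the largest such ideal. It is disjoint from $I$: if $a\in S^{-1}[\neg U(I)]\cap I$, pick $c\in\neg U(I)$ with $a\S c$; since $a\in I$ we have $c\le\neg a$, so $a\S\neg a$ by \eqref{S4} and then $a\le\neg a$ by \eqref{S5}, forcing $a=0$. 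It is largest: if $J$ is a round ideal with $J\cap I=\{0\}$ and $a\in J$, choose $b\in J$ with $a\S b$; for every $d\in I$ we then have $b\wedge d\in J\cap I=\{0\}$, hence $b\le\neg d$, so $b\in\neg U(I)$ and $a\in S^{-1}[\neg U(I)]$. Therefore $I^*=S^{-1}[\neg U(I)]=\neg S[U(I)]$.

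For part~(3), recall $I\prec J$ means $I^*\vee J=B$, which (since $I^*$ and $J$ are closed under finite joins) amounts to $x\vee y=1$ for some $x\in I^*$, $y\in J$. If $U(I)\cap J\ne\varnothing$, take $c$ in it; by roundness of $J$ there is $d\in J$ with $c\S d$, so $\neg d\S\neg c$ by \eqref{S6}, and as $c\in U(I)$ we have $\neg c\in\neg U(I)$, whence $\neg d\in S^{-1}[\neg U(I)]=I^*$ by part~(2); then $\neg d\vee d=1$ witnesses $I\prec J$. Conversely, if $x\vee y=1$ with $x\in I^*$, $y\in J$, pick $e\in\neg U(I)$ with $x\S e$; then $x\le e$ by \eqref{S5}, so $\neg e\le\neg x\le y$ and $\neg e\in J$, while $e\in\neg U(I)$ gives $\neg e\in U(I)$; hence $\neg e\in U(I)\cap J$. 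For the regularity half of part~(4): given $I\in\RI(\mathbf{B})$ and $a\in I$, pick $b\in I$ with $a\S b$ and set $J=S^{-1}[\mathord{\downarrow}b]$ with $\mathord{\downarrow}b=\{x\in B\mid x\le b\}$; then $J$ is a round ideal with $a\in J\subseteq\mathord{\downarrow}b$ (the inclusion by \eqref{S5}), so $b\in U(J)\cap I$ and $J\prec I$ by part~(3); combined with the general frame fact $J\prec I\Rightarrow J\subseteq I$, this gives $I=\bigvee\{J\in\RI(\mathbf{B})\mid J\prec I\}$, i.e.\ regularity. I expect the two delicate points to be the maximality step in part~(2), where roundness of $J$ is essential, and the $\Leftarrow$ direction of part~(3), where one must pass through complements via \eqref{S6} to produce the complementary pair $\neg d$, $d$ with $\neg d\in I^*$.
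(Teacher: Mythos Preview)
Your proof is correct. Each of the four parts is argued cleanly, and the delicate points you flag---the maximality step in part~(2) requiring roundness of $J$, and the passage via \eqref{S6} in part~(3)---are handled properly.

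The paper takes a different route: for parts~(1), (2), and the regularity half of~(4), it simply cites \cite[Thm.~3]{War75}, where the analogous results are established for the more general setting of proximity lattices; only part~(3) receives a direct argument, and that argument differs slightly from yours in that it works with the description $I^*=\neg S[U(I)]$ rather than $I^*=S^{-1}[\neg U(I)]$ and passes through the intermediate characterization $I\prec J\iff S[U(I)]\cap J\neq\varnothing$. Your approach is genuinely more self-contained and elementary---it requires no outside reference and makes transparent exactly where each of \eqref{S1}--\eqref{S7} is used---at the cost of reproving results already available in the literature. The paper's approach is terser and situates the theorem within the existing theory of proximity lattices, but leaves the reader to consult \cite{War75} for the substance of the argument.
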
 

\begin{proof}
\eqref{thm:Round ideals frame:item1}. This follows from \cite[Thm.~3]{War75} (see also \cite[Thm.~1]{Smy92}).

\eqref{thm:Round ideals frame:item2}. The first equality follows from \cite[Thm.~3]{War75} and the second from \cref{lem:operators and negations}.

\eqref{thm:Round ideals frame:item3}.
By definition, $I \prec J$ iff $I^* \vee J=B$. By item~\eqref{thm:Round ideals frame:item2}, this is equivalent to $\neg S[U(I)] \vee J =B$, which  
holds iff there are $a \in S[U(I)]$ and $b \in J$ such that $\neg a \vee b=1$. Since $B$ is a boolean algebra, $\neg a \vee b = 1$ iff $a \le b$. 
Because $S[U(I)]$ is a filter (see \cref{rem:round ideal}\eqref{rem:round ideal:item2}), the existence of $a \in S[U(I)]$ with $a \le b$ is equivalent to $b \in S[U(I)]$. Thus, $I \prec J$ iff $S[U(I)] \cap J \neq \varnothing$.
We have that $S[U(I)] \cap J \neq \varnothing$ iff $U(I) \cap S^{-1}[J] \neq \varnothing$. Since $J$ is a round ideal, this is equivalent to $U(I) \cap J \neq \varnothing$.

\eqref{thm:Round ideals frame:item4}. That $\RI(\mathbf B)$ is compact follows from item~\eqref{thm:Round ideals frame:item1}. It follows from \cite[Thm.~3]{War75} that the relation on $\RI(\mathbf B)$ given by $U(I)\cap J\ne\varnothing$ is approximating. Thus, item~\eqref{thm:Round ideals frame:item3} implies that the well-inside relation is approximating, and hence $\RI(\mathbf B)$ is regular.
\end{proof}

Let $\mathbf{B}_1$ and $\mathbf{B}_2$ be $\mathsf{S5}$-subordination algebras and $T \colon \mathbf{B}_1 \to \mathbf{B}_2$ a compatible subordination. We define $\RI(T) \colon \RI(\mathbf{B}_2) \to \RI(\mathbf{B}_1)$ by setting $\RI(T)(I)=T^{-1}[I]$ for each round ideal $I$ of $\mathbf{B}_2$. 

\begin{theorem} \label{thm: RI contravariant}
$\RI\colon \SubSfive \to \KRFrmP$ is a well-defined contravariant functor.
\end{theorem}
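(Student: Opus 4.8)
The plan is to verify, in order: (i) $\RI(T)$ is a well-defined function $\RI(\mathbf{B}_2)\to\RI(\mathbf{B}_1)$, i.e., $T^{-1}[I]$ is a round ideal of $\mathbf{B}_1$ whenever $I$ is a round ideal of $\mathbf{B}_2$; (ii) $\RI(T)$ is a preframe homomorphism; (iii) $\RI$ preserves identities; and (iv) $\RI$ is contravariant on composition. For (i), that $T^{-1}[I]$ is an ideal is immediate from the subordination axioms \eqref{S2} and \eqref{S4} (it is the preimage of an ideal under a subordination). For roundness, I would use that $T$ is compatible: $T=T\circ S_1$. If $a\in T^{-1}[I]$, then $a\mathrel T c$ for some $c\in I$; since $T = T\circ S_1$, there is $b$ with $a\mathrel{S_1}b\mathrel T c$, so $b\in T^{-1}[I]$ and $a\mathrel{S_1}b$, witnessing roundness. (Alternatively, $T^{-1}[I] = S_1^{-1}[T^{-1}[I]]$ by compatibility together with \cref{rem:round ideal}\eqref{rem:round ideal:item1}.)

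For (ii), I must check $\RI(T)$ preserves finite meets and directed joins. Preservation of the top: $\RI(T)(B_2) = T^{-1}[B_2]$, and since $1\mathrel T 1$ by \eqref{S1} and $T$ is an order-compatible relation, $T^{-1}[B_2]$ contains $1$, hence equals $B_1$ (as it is an ideal containing the top). Binary meets: $\RI(T)(I\cap J) = T^{-1}[I\cap J]$, and one inclusion is trivial; for the other, if $a\in T^{-1}[I]\cap T^{-1}[J]$ then $a\mathrel T c$ with $c\in I$ and $a\mathrel T d$ with $d\in J$, so by \eqref{S3} $a\mathrel T (c\wedge d)$ with $c\wedge d\in I\cap J$. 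Directed joins: $T^{-1}\bigl[\bigvee_\alpha I_\alpha\bigr]\supseteq \bigvee_\alpha T^{-1}[I_\alpha]$ is clear; conversely, if $a\mathrel T c$ with $c\in\bigvee_\alpha I_\alpha$, then $c\le c_{\alpha_1}\vee\dots\vee c_{\alpha_n}$ with $c_{\alpha_i}\in I_{\alpha_i}$, and by directedness all $c_{\alpha_i}$ lie below a single $c'$ in one $I_\alpha$; then $c\le c'$, so $a\mathrel T c'$ by \eqref{S4}, giving $a\in T^{-1}[I_\alpha]$. (Here I should note that the join in $\RI(\mathbf{B})$ agrees with that in $\mathcal{I}(\mathbf{B})$, which is \cref{thm:Round ideals frame}\eqref{thm:Round ideals frame:item1}, so no separate argument for roundness of the join is needed.)

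For (iii): the identity morphism on $(B,S)$ is $S$ itself, and $\RI(S)(I) = S^{-1}[I] = I$ for every round ideal $I$ by \cref{rem:round ideal}\eqref{rem:round ideal:item1}; hence $\RI(S)=\id_{\RI(\mathbf B)}$. For (iv): given $T_1\colon\mathbf{B}_1\to\mathbf{B}_2$ and $T_2\colon\mathbf{B}_2\to\mathbf{B}_3$, I need $\RI(T_2\circ T_1) = \RI(T_1)\circ\RI(T_2)$, i.e., $(T_2\circ T_1)^{-1}[I] = T_1^{-1}[T_2^{-1}[I]]$ for round ideals $I$ of $\mathbf{B}_3$. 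This is the standard set-theoretic identity $(T_2\circ T_1)^{-1} = T_1^{-1}\circ T_2^{-1}$ for relation preimages, applied pointwise. I anticipate the only genuinely delicate point is (i)—establishing roundness of $T^{-1}[I]$—since this is exactly where compatibility of $T$ (rather than mere subordination) is used; everything else is a routine manipulation of subordination axioms and the description of the frame structure on $\RI(\mathbf B)$ from \cref{thm:Round ideals frame}.
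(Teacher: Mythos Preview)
Your proposal is correct and follows essentially the same approach as the paper's proof: both verify that $T^{-1}[I]$ is an ideal from the subordination axioms, use compatibility $T = T \circ S_1$ (equivalently $S_1^{-1}T^{-1}[I] = T^{-1}[I]$) for roundness, check finite meets via \eqref{S3} and the top via $a \mathrel{T} 1$, and treat identities and composition as routine. The only cosmetic difference is that the paper dispatches directed joins in one line by noting that directed joins in $\RI(\mathbf{B})$ are set-theoretic unions (so $T^{-1}$ preserves them trivially), whereas you spell out the directedness argument explicitly; both are fine.
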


\begin{proof}
That $\RI$ is well defined on objects follows from \cref{thm:Round ideals frame}\eqref{thm:Round ideals frame:item4}. We show that it is well defined on morphisms.
Let $T$ be a compatible subordination from $\mathbf{B}_1=(B_1,S_1)$ to $\mathbf{B}_2=(B_2,S_2)$. Let $I \in \RI(\mathbf{B}_2)$. Since $T$ is a subordination, it is straightforward to see that $T^{-1}[I]$ is an ideal. Because $T$ is compatible, $S_1^{-1} T^{-1}[I]= (T \circ S_1)^{-1}[I]=T^{-1}[I]$, and hence $T^{-1}[I]$ is a round ideal. Thus, $\RI(T)$ is well defined. 
To show that $\RI(T)$ is a preframe homomorphism, we need to prove that it preserves directed joins and finite meets. That it preserves directed joins is straightforward because directed joins are set-theoretic unions in $\I(\mathbf{B}_1)$ and $\I(\mathbf{B}_2)$, and hence also in their subframes $\RI(\mathbf{B}_1)$ and $\RI(\mathbf{B}_2)$. 
Moreover, we have that $T^{-1}[B_2]=B_1$ because $a \T 1$ for each $a \in B_1$. Thus, it remains to show that $\RI(T)$ preserves binary meets. Let $I,J \in \RI(\mathbf{B}_2)$. Clearly $T^{-1}[I \cap J] \subseteq T^{-1}[I] \cap T^{-1}[J]$. For the other inclusion, let $a \in T^{-1}[I] \cap T^{-1}[J]$. Then there are $b \in I$, $c \in J$ such that $a \T b$ and $a \T c$. Therefore, $a \T (b \wedge c) \in I \cap J$ by \eqref{S3}, and hence $a \in T^{-1}[I \cap J]$.

It is straightforward to show that $\RI$ preserves identities and reverses compositions. Thus, $\RI\colon \SubSfive \to \KRFrmP$ is a well-defined contravariant functor.
\end{proof}

In the next section we will show that $\RI$ is a dual equivalence.

\section{MacNeille completions of $\mathsf{S5}$-subordination algebras}\label{sec:MacNeille}

In \cite{ABC22a} we showed that the categories $\SubSfive$ and $\devS$ are equivalent. This was done by observing that each of these categories is equivalent to $\KHausR$. In this section we show that the equivalence can be obtained directly by generalizing the theory of MacNeille completions of boolean algebras to $\mathsf{S5}$-subordination algebras.

For a frame $L$, we recall (see, e.g., \cite{BP96}) that the {\em booleanization} of $L$ is 
\[
\bool L=\{ a\in L \mid a=a^{**} \},
\] 
and that $(\bool L,\sqcap,\bigsqcup)$ is a boolean frame (complete boolean algebra), where
\[
a\sqcap b=a\wedge b \quad \mbox{and} \quad \bigsqcup S = \left(\bigvee S\right)^{**}.
\]
If $L$ is compact regular, then $(\bool L,\prec)$ is a de Vries algebra, where $\prec$ is the restriction of the well-inside relation on $L$ to $\bool L$. As was shown in \cite{Bez12}, this correspondence extends to a covariant functor $\bool\colon\KRFrm\to\dev$ which is an equivalence. In the more general setting of $\KRFrmP$ and $\devS$, this correspondence extends to a contravariant functor as follows. 

Let $\Box \colon L \to M$ be a preframe homomorphism. Define the relation $\bool(\Box) \colon \bool M \to \bool L$ by 
\[
b \mathrel{\bool(\Box)} a \iff b \prec \Box a.
\]

\begin{lemma}\label{lem:boolf compatible}
If $\Box \colon L \to M$ is a preframe homomorphism, then $\bool(\Box) \colon \bool M \to \bool L$ is a compatible subordination relation.
\end{lemma}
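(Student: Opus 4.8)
The statement asks for two things: that $\bool(\Box)$ satisfies \eqref{S1}--\eqref{S4}, and that it is a compatible morphism $\bool M\to\bool L$ in $\devS$, i.e.\ ${\prec_{\bool L}}\circ\bool(\Box)=\bool(\Box)=\bool(\Box)\circ{\prec_{\bool M}}$, where $\prec_{\bool M}$ and $\prec_{\bool L}$ denote the well-inside relations of $M$ and $L$ restricted to $\bool M$ and $\bool L$ (recall these restricted relations are the identity morphisms of $\bool M$ and $\bool L$ in $\devS$). The plan is to isolate three elementary facts and then read off both claims from them. The facts are: (i) since $\Box$ preserves finite meets, $\Box 1=1$ and $\Box$ is monotone, and being a preframe homomorphism it also preserves directed joins; (ii) the well-inside relation of any frame is monotone in the sense that $x'\le x\prec y\le y'$ implies $x'\prec y'$, and it is closed under binary joins on the left and binary meets on the right; (iii) for all $x,y$ in a frame, $x\prec y\iff x^{**}\prec y$, since $x\prec y$ means $x^*\vee y=1$ and $x^{***}=x^*$. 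I would also record that on $\bool L$ the order, the finite meets, and (by (i)) the subordination $\prec$ agree with those of $L$; only the finite join differs, being $a\sqcup b=(a\vee b)^{**}$.

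From these, \eqref{S1}--\eqref{S4} are immediate. For \eqref{S1}: $1\prec\Box 1=1$ and $0\prec\Box 0$ always hold. For \eqref{S2}: if $a\prec\Box c$ and $b\prec\Box c$ then $a\vee b\prec\Box c$, hence $(a\vee b)^{**}\prec\Box c$ by (iii), which is exactly the $\bool M$-join of $a$ and $b$ being $\bool(\Box)$-related to $c$. For \eqref{S3}: if $a\prec\Box c$ and $a\prec\Box d$ then $a\prec\Box c\wedge\Box d=\Box(c\wedge d)$. For \eqref{S4}: $a\le b\prec\Box c\le\Box d$ forces $a\prec\Box d$ by (i) and (ii). The inclusions ${\prec_{\bool L}}\circ\bool(\Box)\subseteq\bool(\Box)$ and $\bool(\Box)\circ{\prec_{\bool M}}\subseteq\bool(\Box)$ are equally easy: a witness $a'\in\bool L$ with $b\prec\Box a'$ and $a'\prec a$ gives $b\prec\Box a'\le\Box a$; a witness $b'\in\bool M$ with $b\prec b'$ and $b'\prec\Box a$ gives $b\le b'\prec\Box a$.

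The substance is in the reverse inclusions. For $\bool(\Box)\subseteq\bool(\Box)\circ{\prec_{\bool M}}$, I would start from $b\prec\Box a$ in $M$, use regularity of $M$ to write $\Box a$ as the directed join of $\{x\in M\mid x\prec\Box a\}$, and then use compactness of $M$ (the identity $b^*\vee\Box a=1$ exhibits $1$ as a directed join of the elements $b^*\vee x$ with $x\prec\Box a$, so one of them equals $1$) to extract $x$ with $b\prec x\prec\Box a$; the element $b':=x^{**}\in\bool M$ then satisfies $b\prec b'$ (as $b\prec x\le b'$) and $b'\prec\Box a$ (by (iii)), so $b\mathrel{(\bool(\Box)\circ{\prec_{\bool M}})}a$. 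For $\bool(\Box)\subseteq{\prec_{\bool L}}\circ\bool(\Box)$, again from $b\prec\Box a$, I would use regularity of $L$ to write $a=\bigvee\{x\in L\mid x\prec a\}$, apply (i) to get $\Box a=\bigvee\{\Box x\mid x\prec a\}$ as a directed join, and use compactness of $M$ to find $x\prec a$ with $b\prec\Box x$; then $a':=x^{**}\in\bool L$ satisfies $a'\prec a$ (by (iii)) and $b\prec\Box x\le\Box a'$ (monotonicity), so $b\mathrel{({\prec_{\bool L}}\circ\bool(\Box))}a$.

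The main obstacle is exactly this last maneuver: the witness produced by regularity and compactness lives in $M$ or $L$ rather than in the booleanization, and fact (iii) is precisely what lets one replace it by $x^{**}$ without disturbing the $\prec$-relations that matter. Everything else is bookkeeping with the basic properties of $\prec$, the finite-meet preservation of $\Box$, and the description of the de Vries structure on a booleanization recalled above.
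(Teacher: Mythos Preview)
Your proof is correct and follows essentially the same approach as the paper: verify the subordination axioms directly, and establish compatibility via the interpolation property of compact regular frames together with preservation of directed joins by $\Box$. The only cosmetic difference is that the paper invokes \cite[Rem.~3.2]{Bez12} to interpolate directly inside $\bool M$ and $\bool L$, whereas you interpolate in $M$ and $L$ and then pass to the booleanization via your fact~(iii); you are also a bit more explicit about (S2), correctly noting that the join in $\bool M$ is $(a\vee b)^{**}$.
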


\begin{proof}
	Let $T = \bool(\Box)$. It is straightforward to check that $T$ is a subordination. We only verify \eqref{S3}. Suppose $b \T a, c$. Then $b \prec \Box a$ and $b \prec \Box c$. Since $\Box$ is a preframe homomorphism, we have $b \prec \Box a \wedge \Box c = \Box(a \wedge c)$. Thus, $T$ satisfies \eqref{S3}. We next prove that $T$ is compatible. Let $a \in \bool L$ and $b \in \bool M$.
	We show that $b \T a$ iff there is $d \in \bool M$ such that $b \prec d \T a$. First suppose that $b \T a$, so $b \prec \Box a$. Since $M$ is compact regular, there is $d \in \bool M$ such that $b \prec d \prec \Box a$ (see, e.g., \cite[Rem.~3.2]{Bez12}). Therefore, $b \prec d \T a$. Conversely, suppose that $b \prec d \T a$. Then $b \prec d \prec \Box a$. Thus, $b \prec \Box a$, and so $b \T a$.

	It remains to show that $b \T a$ iff there is $c \in \bool L$ such that $b \T c \prec a$. For the right-to-left implication, we have that $c \prec a$ implies $c \le a$, and hence $\Box c \le \Box a$ because $\Box$ is order-preserving. Since $b \prec \Box c$, it follows that $b \prec \Box a$, and so $b \T a$. For the left-to-right implication, since $L$ is a regular frame, $a$ is the directed join of $\{c \in \bool L \mid c \prec a \}$. Therefore, since $\Box$ preserves directed joins, $\Box a = \bigvee \{ \Box c \mid c \in \bool L, \ c \prec a \}$. 
	Thus, from $b \prec \Box a$, using compactness, we find $c \in \bool L$ such that $c \prec a$ and $b \prec \Box c$.
\end{proof}

We thus define $\bool \colon \KRFrmP \to \devS$ by sending each compact regular frame $L$ to $(\bool L,\prec)$ and each preframe homomorphism $\Box\colon L\to M$ to $\bool(\Box)$.

\begin{proposition} \label{prop: B contravariant}
$\bool \colon \KRFrmP \to \devS$ is a contravariant functor.
\end{proposition}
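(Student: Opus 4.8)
The plan is to verify directly that $\bool$ respects identities and reverses composition, having already established in \cref{lem:boolf compatible} that $\bool$ sends objects of $\KRFrmP$ to objects of $\devS$ and morphisms to morphisms. So the content of the proposition is the functoriality equations, and the proof will be two calculations.

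For identities, the identity morphism on a compact regular frame $L$ in $\KRFrmP$ is the identity map $\id_L$, and the identity morphism on the de Vries algebra $(\bool L, \prec)$ in $\devS$ is the subordination relation $\prec$ itself (restricted to $\bool L$), by the definition of $\SubSfive$ recalled in the excerpt. First I would compute: for $a, b \in \bool L$, we have $b \mathrel{\bool(\id_L)} a$ iff $b \prec \id_L(a) = a$, which is exactly $b \prec a$. Hence $\bool(\id_L)$ is the identity morphism on $(\bool L, \prec)$, as required.

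The main step is composition-reversal. Given preframe homomorphisms $\Box_1 \colon L \to M$ and $\Box_2 \colon M \to N$, I must show $\bool(\Box_2 \circ \Box_1) = \bool(\Box_1) \circ \bool(\Box_2)$ as relations from $\bool N$ to $\bool L$ (note the composite in $\KRFrmP$ is $\Box_2 \circ \Box_1 \colon L \to N$, so its image under the contravariant $\bool$ is a morphism $\bool N \to \bool L$, matching the composite $\bool(\Box_1)\circ\bool(\Box_2)$ of $\bool N \xrightarrow{\bool(\Box_2)} \bool M \xrightarrow{\bool(\Box_1)} \bool L$ in $\devS$). Unwinding definitions: $c \mathrel{\bool(\Box_2\circ\Box_1)} a$ iff $c \prec \Box_2\Box_1 a$; while $c \mathrel{(\bool(\Box_1)\circ\bool(\Box_2))} a$ iff there is $b \in \bool M$ with $c \mathrel{\bool(\Box_2)} b$ and $b \mathrel{\bool(\Box_1)} a$, i.e.\ $c \prec \Box_2 b$ and $b \prec \Box_1 a$. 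The right-to-left direction is immediate: from $b \prec \Box_1 a$ we get $b \le \Box_1 a$, so $\Box_2 b \le \Box_2 \Box_1 a$ since $\Box_2$ is order-preserving, and combined with $c \prec \Box_2 b$ this gives $c \prec \Box_2\Box_1 a$. For the left-to-right direction, suppose $c \prec \Box_2 \Box_1 a$. Since $L$ is regular, $a = \bigvee\{d \in \bool L \mid d \prec a\}$ and this join is directed (the family of such $d$ is updirected because $\prec$ is a subordination and $\bool L$ is a sublattice, using the standard interpolation/lattice argument — cf.\ the proof of \cref{lem:boolf compatible}); since $\Box_1$ preserves directed joins, $\Box_1 a = \bigvee\{\Box_1 d \mid d \prec a\}$, again a directed join, and since $\Box_2$ preserves directed joins, $\Box_2\Box_1 a = \bigvee\{\Box_2\Box_1 d \mid d \prec a\}$. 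Now $\Box_2\Box_1 a \in N$ and $c \prec \Box_2\Box_1 a$ means $c^* \vee \Box_2\Box_1 a = 1$; by compactness of $N$ (so $1$ is compact) applied to this directed join together with $c^*$, there is $d \prec a$ with $c^* \vee \Box_2\Box_1 d = 1$, i.e.\ $c \prec \Box_2\Box_1 d$. Applying the already-established argument (as in \cref{lem:boolf compatible}) inside $M$: since $M$ is compact regular and $\Box_1$ preserves directed joins, from $c \prec \Box_2(\Box_1 d)$ and $\Box_1 d = \bigvee\{\Box_1 e \mid e \in \bool L,\ e \prec d\}$... actually more directly, I interpolate in $N$: since $N$ is compact regular, choose $c' \in \bool N$ with $c \prec c' \prec \Box_2\Box_1 d$; then $b := \Box_1 d$ works only if $b \in \bool M$, which need not hold, so instead I take $b$ to be any element of $\bool M$ with $c' \prec \Box_2 b$ and $b \prec \Box_1 d$ — obtained by the same regularity-plus-compactness run on $\Box_2$ and the directed join $\Box_1 d = \bigvee\{\Box_1 e \mid e \prec d,\ e\in\bool L\}$, yielding $e \prec d$ with $c' \prec \Box_2 \Box_1 e$; but $\Box_1 e \notin \bool M$ in general, so finally interpolate once more in $M$ picking $b \in \bool M$ with $c' \prec \Box_2 b$ and... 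This is exactly the mechanism already packaged in \cref{lem:boolf compatible} (the proof there shows $b \T a \iff \exists d \in \bool M: b \prec d \T a$ and $b \T a \iff \exists c \in \bool L: b \T c \prec a$), so the cleanest route is to invoke compatibility of $\bool(\Box_1)$ and $\bool(\Box_2)$ and the fact that in the allegory $\SubSfive$ composition of compatible subordinations behaves well; but to keep it self-contained I would just run the regularity/compactness extraction twice. The anticipated obstacle is precisely this bookkeeping: elements like $\Box_1 d$ live in $M$ but not in $\bool M$, so each time I pass between the frames I must re-interpolate into the booleanization using compact regularity, and I should be careful that the witness $b \in \bool M$ ends up satisfying both $c \prec \Box_2 b$ and $b \prec \Box_1 a$ simultaneously — which it does, since $b \prec \Box_1 d \le \Box_1 a$. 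Once this is laid out, preservation of identities and composition both hold, so $\bool \colon \KRFrmP \to \devS$ is a contravariant functor.
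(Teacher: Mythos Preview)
Your proposal is correct and follows the same plan as the paper: verify identities directly, and for composition show both inclusions using monotonicity for one direction and a regularity/compactness interpolation for the other. The paper's proof is essentially identical, including its explicit appeal to the argument at the end of \cref{lem:boolf compatible}.

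The only difference is that your left-to-right argument takes an unnecessary detour. You first use regularity of $L$ to replace $a$ by some $d \prec a$ with $c \prec \Box_2\Box_1 d$, and then still need to interpolate in $M$ to land on a witness $b \in \bool M$. The paper skips the first step entirely: since $M$ is regular, $\Box_1 a = \bigvee\{b \in \bool M \mid b \prec \Box_1 a\}$ is a directed join; pushing through $\Box_2$ and invoking compactness of $N$ yields directly a $b \in \bool M$ with $c \prec \Box_2 b$ and $b \prec \Box_1 a$, i.e.\ $c \mathrel{\bool(\Box_2)} b \mathrel{\bool(\Box_1)} a$. This is exactly the single run of the mechanism from \cref{lem:boolf compatible} that you eventually gesture at; running it once on $\Box_1 a$ suffices, and there is no need to touch regularity of $L$ or to produce the intermediate $d$, $c'$, or $e$.
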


\begin{proof}
That $\bool$ is well defined on objects follows from \cite[Lem.~3.1]{Bez12} and that it is well defined on morphisms from Lemma~\ref{lem:boolf compatible}. 
Let $L$ be a compact regular frame. If $\Box$ is the identity on $L$, then $\bool(\Box)$ coincides with $\prec$ which is the identity on $(\mathfrak B L,\prec)$. 
Let $\Box_1 \colon L \to M$ and $\Box_2 \colon M \to N$ be two preframe homomorphisms between compact regular frames. 
We show that $\bool(\Box_2 \circ \Box_1) = \bool(\Box_1) \circ \bool(\Box_2)$. Let $T_1=\bool(\Box_1)$ and $T_2=\bool(\Box_2)$.
For $a \in \bool L$ and $c \in \bool N$, if $c \mathrel{(T_1 \circ T_2)} a$, then there is $b \in \bool M$ such that $c \T_2 b$ and $b \T_1 a$. 
Thus, $c \prec \Box_2 b$ and $b \prec \Box_1 a$. 
Since $b \prec \Box_1 a$ and $\Box_2$ is order-preserving, we have $\Box_2 b \le \Box_2\Box_1 a$. 
Therefore, $c \prec \Box_2\Box_1 a$ which means that $c \mathrel{\bool(\Box_2 \circ \Box_1)} a$. 
Suppose next that $c \mathrel{\bool(\Box_2 \circ \Box_1)} a$. 
Therefore, $c \prec \Box_2 \Box_1 a$. By arguing as at the end of the proof of \cref{lem:boolf compatible}, there is $b \in \bool M$ such that $c \T_2 b$ and $b \prec \Box_1 a$. 
Thus, $c \T_2 b$ and $b \T_1 a$ which means that $c \mathrel{(T_1 \circ T_2)} a$.
\end{proof}

\begin{definition}
Let $\NI = \bool \circ \RI$.
\[
\begin{tikzcd}
\SubSfive \arrow[r, "\RI"'] \arrow[rr, bend left=25, "\NI"] &  \KRFrmP \arrow[r, "\bool"'] & \devS
\end{tikzcd}
\]
\end{definition}

By \cref{thm: RI contravariant} $\RI \colon \SubSfive \to \KRFrmP$ is a contravariant functor, and by \cref{prop: B contravariant} $\bool \colon \KRFrmP \to \devS$ is a contravariant functor. Thus, $\NI \colon \SubSfive \to \devS$ is a covariant functor. In particular, we have

\begin{proposition}\label{prop:B in subs5 then NI(B) in devs}
If $\mathbf{B}$ is an $\mathsf{S5}$-subordination algebra, then $\mathcal{NI}(\mathbf{B})$ is a de Vries algebra.
\end{proposition}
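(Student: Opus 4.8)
The plan is to observe that this proposition is an immediate corollary of the functorial facts just established, so the real content is already packaged. Concretely, $\NI = \bool \circ \RI$ where, by \cref{thm: RI contravariant}, $\RI$ is a well-defined contravariant functor from $\SubSfive$ to $\KRFrmP$, and by \cref{prop: B contravariant}, $\bool$ is a contravariant functor from $\KRFrmP$ to $\devS$. Hence for an $\mathsf{S5}$-subordination algebra $\mathbf{B}$, the object $\RI(\mathbf{B})$ lies in $\KRFrmP$, and therefore $\NI(\mathbf{B}) = \bool(\RI(\mathbf{B}))$ lies in $\devS$, i.e.\ it is a de Vries algebra. That is the entire argument.

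If one wanted to spell out slightly more, I would trace the definitions: by \cref{thm:Round ideals frame}\eqref{thm:Round ideals frame:item4}, $\RI(\mathbf{B})$ is a compact regular frame, so it is a legitimate input to the booleanization functor; and by \cite[Lem.~3.1]{Bez12} (invoked in the proof of \cref{prop: B contravariant}), the booleanization of a compact regular frame equipped with the restricted well-inside relation is a de Vries algebra. Composing these two observations yields the claim. No estimates, no constructions, and no case analysis are required.

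There is essentially no obstacle here: the statement is recorded as a standalone proposition only for convenient later reference (it isolates the object-level behaviour of the functor $\NI$ that will be used when proving $\NI$ implements the equivalence $\SubSfive \simeq \devS$). The one thing to be careful about is simply not to re-prove the functoriality of $\RI$ or $\bool$; those are already in hand, and the proof should just cite \cref{thm: RI contravariant} and \cref{prop: B contravariant} (or equivalently the preceding remark that $\NI$ is a well-defined covariant functor) and conclude.

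\begin{proof}
By \cref{thm: RI contravariant}, $\RI(\mathbf{B})$ is an object of $\KRFrmP$, i.e.\ a compact regular frame. By \cref{prop: B contravariant}, $\bool$ sends objects of $\KRFrmP$ to objects of $\devS$. Hence $\NI(\mathbf{B}) = \bool(\RI(\mathbf{B}))$ is an object of $\devS$, that is, a de Vries algebra.
\end{proof}
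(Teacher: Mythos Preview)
Your proposal is correct and matches the paper's approach exactly: the paper also presents this proposition as an immediate consequence of the functoriality of $\RI$ and $\bool$ (via \cref{thm: RI contravariant} and \cref{prop: B contravariant}), noting that $\NI = \bool \circ \RI$ is a covariant functor from $\SubSfive$ to $\devS$, whence the object-level claim follows. In fact, the paper does not even open a proof environment; it simply records the proposition after the sentence ``In particular, we have''.
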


\begin{remark}\label{rem:prec on NIB}
Since $\prec$ on $\NI(\mathbf{B})$ is the restriction of $\prec$ on $\RI(\mathbf{B})$, by \cref{thm:Round ideals frame}\eqref{thm:Round ideals frame:item3} we have that $I \prec J$ iff $U(I) \cap J \neq \varnothing$ for all $I, J \in \NI(\mathbf{B})$.
\end{remark}

\begin{definition}\label{def:NI(B)}
Let $\mathbf{B}=(B,S)$ be an $\mathsf{S5}$-subordination algebra. We call $\mathcal{NI}(\mathbf{B})$ the \emph{MacNeille completion} of $\mathbf{B}$. We say that a round ideal $I$ of $\mathbf{B}$ is \emph{normal} if $I \in \NI(\mathbf{B})$. 
\end{definition}

The next theorem provides a characterization of normal round ideals.

\begin{theorem}\label{thm:NI booleanization of SI}
Let $I \in \RI(\mathbf{B})$. We have that $I \in \NI(\mathbf{B})$ iff $I=S^{-1}[L(S[U(I)])]$.
\end{theorem}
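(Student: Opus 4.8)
The plan is to unwind the definition $\NI(\mathbf{B}) = \bool(\RI(\mathbf{B}))$ and reduce the claim to the single identity
\[
I^{**} = S^{-1}[L(S[U(I)])],
\]
where $(-)^*$ denotes the pseudocomplement in the frame $\RI(\mathbf{B})$. Indeed, the underlying set of the booleanization of a frame $L$ is $\{a \in L \mid a = a^{**}\}$, so for a round ideal $I$ of $\mathbf{B}$ we have $I \in \NI(\mathbf{B})$ if and only if $I = I^{**}$ in $\RI(\mathbf{B})$. Hence, once the displayed identity is established, the biconditional in the theorem follows at once.

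First I would apply \cref{thm:Round ideals frame}\eqref{thm:Round ideals frame:item2} to $I$, obtaining $I^* = \neg S[U(I)]$. Since $I^*$ is an element of the frame $\RI(\mathbf{B})$, it is itself a round ideal, so \cref{thm:Round ideals frame}\eqref{thm:Round ideals frame:item2} may be applied again, this time to $I^*$, yielding $I^{**} = (I^*)^* = S^{-1}[\neg U(I^*)]$.

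It remains to rewrite $\neg U(I^*)$. Using the identity $\neg U(X) = L(\neg X)$ recalled just before \cref{lem:operators and negations} (applied to $X = I^* = \neg S[U(I)]$) together with $\neg\neg = \id$, we obtain
\[
\neg U(I^*) = \neg U(\neg S[U(I)]) = L(\neg\neg S[U(I)]) = L(S[U(I)]).
\]
Substituting this into $I^{**} = S^{-1}[\neg U(I^*)]$ gives $I^{**} = S^{-1}[L(S[U(I)])]$, which combined with the reduction in the first paragraph proves the theorem.

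The argument is essentially bookkeeping with the operators $U$, $L$, $\neg$, $S[-]$ and $S^{-1}[-]$; the only points requiring a little care are that $I^*$ is again a round ideal (so that \cref{thm:Round ideals frame}\eqref{thm:Round ideals frame:item2} can legitimately be reapplied to it) and the correct order of the two negations. I do not expect a substantive obstacle here.
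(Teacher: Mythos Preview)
Your proposal is correct and follows essentially the same approach as the paper: both arguments reduce the claim to proving $I^{**}=S^{-1}[L(S[U(I)])]$ and establish this by applying \cref{thm:Round ideals frame}\eqref{thm:Round ideals frame:item2} twice together with the identities relating $\neg$, $U$, $L$, and $S$. The only cosmetic difference is that the paper uses the formula $I^*=\neg S[U(I)]$ at both steps (and then simplifies via \cref{lem:operators and negations}), whereas you use $I^*=S^{-1}[\neg U(I)]$ at the second step; since \cref{thm:Round ideals frame}\eqref{thm:Round ideals frame:item2} asserts these are equal, the computations are interchangeable.
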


\begin{proof}
By \cref{lem:operators and negations} and \cref{thm:Round ideals frame}\eqref{thm:Round ideals frame:item2},
\begin{align*}
I^{**} &=\neg S[U(\neg S[U(I)])]=\neg S[\neg L(S[U(I)])]=\neg \neg S^{-1}[L(S[U(I)])]=S^{-1}[L(S[U(I)])].
\end{align*}
Since $I \in \NI(\mathbf{B})$ iff $I = I^{**}$, the result follows.
\end{proof}

\begin{remark}
We recall 
(see, e.g., \cite[p.~98]{Gra78})
that an ideal $I$ of a boolean algebra $B$ is {\em normal} if $LU(I)=I$, and that the {\em MacNeille completion} of $B$ is constructed as the complete boolean algebra of normal ideals of $B$. \cref{def:NI(B),thm:NI booleanization of SI} are an obvious generalization of this. Indeed, if $S$ is the partial ordering of $B$, then $I \in \NI(\mathbf{B})$ iff $I$ is a normal ideal of $B$.
For further connection, see \cref{prop:MacNeille and compingent}.
\end{remark}

An important feature of the MacNeille completion of an $\mathsf{S5}$-subordination algebra $\mathbf{B}$ is that it is isomorphic to $\mathbf{B}$ in $\SubSfive$ (which happens because morphisms in $\SubSfive$ 
are not structure-preserving bijections; see \cite[Rem.~3.15(4)]{ABC22a}). To see this, we need the following lemma.
We freely use the fact that if $I,J \in \RI(\mathbf{B})$, then
\begin{equation}\label{eq:I**precJ}
I \prec J \; \Longrightarrow \; I^{**} \prec J,
\end{equation}
which is a consequence of $I^{***}=I^{*}$.

\begin{lemma} \label{l:normal-in-between}
Let $a \in \mathbf{B}$ and $J \in \RI(\mathbf{B})$. Then $a \in J$ iff there is $I \in \NI(\mathbf{B})$ such that $a \in I \prec J$.
\end{lemma}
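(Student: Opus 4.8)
The plan is to prove the two directions separately, with the right-to-left implication being essentially immediate and the left-to-right implication requiring a construction of a suitable normal round ideal sandwiched between a principal-type round ideal and $J$. For the right-to-left direction, suppose there is $I \in \NI(\mathbf{B})$ with $a \in I \prec J$. Since $I \prec J$, by \cref{rem:prec on NIB} (or \cref{thm:Round ideals frame}\eqref{thm:Round ideals frame:item3}) we have $U(I) \cap J \neq \varnothing$, so there is $b \in J$ with $b \in U(I)$, i.e., $a \le b$ (as $a \in I$). Since $J$ is an ideal (in particular a downset), $a \in J$. This direction uses nothing about normality of $I$.

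For the left-to-right direction, suppose $a \in J$. Since $J$ is a round ideal, there is $b \in J$ with $a \S b$, and iterating, there is $c \in J$ with $a \S c \S b$ (using \eqref{S7} or simply roundness twice). The natural candidate is to take the round ideal $K = S^{-1}[L(\{b\})] = S^{-1}[\mathord{\downarrow} b] = \{x \in B \mid x \S b\}$, which is a round ideal by \cref{rem:round ideal}\eqref{rem:round ideal:item1}, and then set $I = K^{**}$, its normalization in the sense of \cref{thm:NI booleanization of SI}, which lies in $\NI(\mathbf{B})$. We have $a \in K$ since $a \S b$. We need: (i) $a \in I = K^{**}$, which holds because $K \subseteq K^{**}$ in the frame $\RI(\mathbf{B})$; and (ii) $I \prec J$. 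For (ii), first note $K \prec J$: indeed $b \in U(K)$ (every $x \in K$ satisfies $x \S b$, hence $x \le b$ by \eqref{S5}) and $b \in J$, so $U(K) \cap J \neq \varnothing$, giving $K \prec J$ by \cref{thm:Round ideals frame}\eqref{thm:Round ideals frame:item3}. Then by \eqref{eq:I**precJ}, $K \prec J$ implies $K^{**} \prec J$, i.e., $I \prec J$. Combining, $a \in I \prec J$ with $I \in \NI(\mathbf{B})$, as desired.

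The only subtle point — and the step I would check most carefully — is verifying that $K = \{x \in B \mid x \S b\}$ is genuinely a round ideal (that it is an ideal uses \eqref{S2}, \eqref{S4}, and $0 \S 0$; that it is round uses \eqref{S7}, since $x \S b$ yields $y$ with $x \S y \S b$, so $y \in K$), and that $b \in J$ is available with $a \S b$ — which is exactly roundness of $J$ applied to $a \in J$. One could avoid even the double application of roundness: a single step gives $b \in J$ with $a \S b$, and that single $b$ suffices for the whole argument above. I expect no real obstacle here; the lemma is a routine bridge between elements of $\mathbf{B}$ and normal round ideals, mirroring the classical fact that in a boolean algebra every element lies below a normal ideal inside any given ideal containing it.
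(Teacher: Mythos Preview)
Your proof is correct and follows essentially the same route as the paper's: for the forward direction you take $b \in J$ with $a \S b$, form the round ideal $K = S^{-1}[b]$, observe $b \in U(K) \cap J$ so that $K \prec J$, and then pass to $I = K^{**}$ using \eqref{eq:I**precJ}. The only cosmetic differences are that the paper handles the backward direction by the one-liner $I \prec J \Rightarrow I \subseteq J$ rather than unpacking $U(I) \cap J \neq \varnothing$, and it never invokes the second interpolant $c$ (which, as you yourself note, is unnecessary).
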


\begin{proof}
	For the right-to-left implication, if $a \in I \prec J$, then $a \in I \subseteq J$, and hence $a \in J$.
	For the left-to-right implication, 
	since $J$ is a round ideal, there is $b\in J$ such that $a \S b$.
	We have $a \in S^{-1}[b]$ and $b \in U(S^{-1}[b])$.
Thus, $S^{-1}[b] \prec J$ by \cref{thm:Round ideals frame}\eqref{thm:Round ideals frame:item3}.
Let $I =(S^{-1}[b])^{**}$. Then $I \in \mathcal{NI}(\mathbf{B})$ and $a \in S^{-1}[b] \subseteq I$. Moreover, by \eqref{eq:I**precJ}, $S^{-1}[b] \prec J$ implies $I \prec J$.
	Consequently, $a \in I \prec J$.
\end{proof}

Let $Q_{\mathbf{B}} \colon \mathbf{B} \to \mathcal{NI}(\mathbf{B})$ be the relation defined by
\[
a \, Q_{\mathbf{B}} \, I \iff a \in I.
\] 

\begin{lemma}
	$Q_{\mathbf{B}}$ is a morphism in $\SubSfive$.
\end{lemma}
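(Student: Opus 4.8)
The plan is to verify that $Q_{\mathbf{B}} \colon \mathbf{B} \to \mathcal{NI}(\mathbf{B})$ satisfies the subordination axioms \eqref{S1}--\eqref{S4} and then the compatibility condition $S_{\mathcal{NI}(\mathbf{B})} \circ Q_{\mathbf{B}} = Q_{\mathbf{B}} = Q_{\mathbf{B}} \circ S$, where $S_{\mathcal{NI}(\mathbf{B})}$ is the subordination $\prec$ on the de Vries algebra $\mathcal{NI}(\mathbf{B})$ (which serves as its identity morphism in $\SubSfive$). Here $a \mathrel{Q_{\mathbf{B}}} I$ means $a \in I$.

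First I would check the subordination axioms. Axiom \eqref{S1}: $0 \mathrel{Q_{\mathbf{B}}} \{0\}$ since $\{0\}$ is the smallest normal round ideal, and $1 \mathrel{Q_{\mathbf{B}}} B$ since $B = S^{-1}[B]$ is round and $B = B^{**}$ is normal. Axiom \eqref{S2}: if $a \in I$ and $b \in I$ then $a \vee b \in I$ since $I$ is an ideal. Axiom \eqref{S3}: if $a \in I$ and $a \in J$ for normal round ideals $I, J$, then $a \in I \cap J$; and $I \cap J = I \sqcap J$ is the meet in $\mathcal{NI}(\mathbf{B})$, so it is again normal and round. Axiom \eqref{S4}: if $a \le b$, $b \mathrel{Q_{\mathbf{B}}} I$, and $I \subseteq J$ with $J$ normal and round, then $a \le b \in I \subseteq J$, so $a \in J$ since $J$ is downward closed.

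Next, compatibility. The equality $Q_{\mathbf{B}} = Q_{\mathbf{B}} \circ S$ unfolds to: $a \in I$ iff there exists $b$ with $a \mathrel{S} b$ and $b \in I$; this is precisely the statement that $I$ is a round ideal, which holds for every $I \in \mathcal{NI}(\mathbf{B})$ (the reverse direction uses that $a \mathrel{S} b \le b$ and $b \in I$ give $a \in I$ by \eqref{S4}/downward closure). The equality $Q_{\mathbf{B}} = S_{\mathcal{NI}(\mathbf{B})} \circ Q_{\mathbf{B}}$ unfolds to: $a \in J$ iff there exists $I \in \mathcal{NI}(\mathbf{B})$ with $a \in I$ and $I \prec J$; and this is exactly the content of \cref{l:normal-in-between}. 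This is the main point of the verification, and it is the reason \cref{l:normal-in-between} was proved first, so there is no real obstacle — the lemma does all the work. I would therefore present the compatibility half as an immediate consequence of \cref{l:normal-in-between} together with the elementary roundness observation, and keep the $\eqref{S1}$--$\eqref{S4}$ checks brief.
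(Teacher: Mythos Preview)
Your proposal is correct and follows the same approach as the paper: the paper too dismisses \eqref{S1}--\eqref{S4} as easy, derives $Q_{\mathbf{B}} = Q_{\mathbf{B}} \circ S$ from roundness ($I = S^{-1}[I]$), and derives ${\prec} \circ Q_{\mathbf{B}} = Q_{\mathbf{B}}$ from \cref{l:normal-in-between}. Your write-up simply spells out the details the paper suppresses.
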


\begin{proof}
	It is easy to see that $Q_{\mathbf{B}}$ is a subordination relation.
	The equality $Q_{\mathbf{B}} = Q_{\mathbf{B}} \circ S$ follows from $I = S^{-1}[I]$, and the equality ${\prec} \circ Q_{\mathbf{B}} = Q_{\mathbf{B}}$ from \cref{l:normal-in-between}.
\end{proof}

If $T\colon\mathbf{B}_1 \to \mathbf{B}_2$ is a morphism in $\SubSfive$, define $\inv{T}\colon\mathbf{B}_2\to\mathbf{B}_1$ by 
\begin{equation}\label{eq:T inv}
b\mathrel{\inv{T}}a \iff \neg a \T \neg b.
\end{equation}
Then $\inv{T}$ is a morphism in $\SubSfive$ (see the paragraph before \cite[Thm.~3.10]{ABC22a}).

\begin{lemma}\label{thm:iso}
	$Q_{\mathbf{B}} \colon \mathbf{B} \to \mathcal{NI}(\mathbf{B})$ is an isomorphism.
\end{lemma}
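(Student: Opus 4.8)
The plan is to exhibit a two-sided inverse of $Q_{\mathbf{B}}$ in $\SubSfive$. The natural candidate is $\inv{Q_{\mathbf{B}}} \colon \mathcal{NI}(\mathbf{B}) \to \mathbf{B}$ given by \eqref{eq:T inv}, which is a morphism in $\SubSfive$ by the observation following \eqref{eq:T inv}. The first step is to describe $\inv{Q_{\mathbf{B}}}$ explicitly. Since $\mathcal{NI}(\mathbf{B}) = \bool(\RI(\mathbf{B}))$, the boolean complement of a normal round ideal $I$ is its frame pseudocomplement $I^*$ in $\RI(\mathbf{B})$, which by \cref{thm:Round ideals frame}\eqref{thm:Round ideals frame:item2} equals $\neg S[U(I)]$. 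Hence, for $I \in \mathcal{NI}(\mathbf{B})$ and $a \in \mathbf{B}$,
\[
I \mathrel{\inv{Q_{\mathbf{B}}}} a \iff \neg a \in I^* = \neg S[U(I)] \iff a \in S[U(I)] \iff U(I) \cap S^{-1}[a] \neq \varnothing ;
\]
moreover, $S^{-1}[a] = \{x \in B : x \mathrel{S} a\}$ is a round ideal (using \eqref{S1}, \eqref{S2}, \eqref{S4}, \eqref{S5}, \eqref{S7}), so by \cref{thm:Round ideals frame}\eqref{thm:Round ideals frame:item3} this can be restated as $I \mathrel{\inv{Q_{\mathbf{B}}}} a \iff I \prec S^{-1}[a]$.

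Next I would verify the two required identities: $\inv{Q_{\mathbf{B}}} \circ Q_{\mathbf{B}} = S$ (the identity morphism on $\mathbf{B}$) and $Q_{\mathbf{B}} \circ \inv{Q_{\mathbf{B}}} = {\prec}$ (the identity morphism on $\mathcal{NI}(\mathbf{B})$; recall \cref{rem:prec on NIB}). For the first, unravelling relation composition, $a \mathrel{(\inv{Q_{\mathbf{B}}} \circ Q_{\mathbf{B}})} a'$ holds iff there is $I \in \mathcal{NI}(\mathbf{B})$ with $a \in I$ and $I \prec S^{-1}[a']$; by \cref{l:normal-in-between} applied to the round ideal $J = S^{-1}[a']$, this is equivalent to $a \in S^{-1}[a']$, that is, to $a \mathrel{S} a'$. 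For the second, $I \mathrel{(Q_{\mathbf{B}} \circ \inv{Q_{\mathbf{B}}})} J$ holds iff there is $a \in B$ with $a \in S[U(I)]$ and $a \in J$, i.e.\ iff $S[U(I)] \cap J \neq \varnothing$; this is exactly the characterization of $I \prec J$ obtained inside the proof of \cref{thm:Round ideals frame}\eqref{thm:Round ideals frame:item3}. Since $\inv{Q_{\mathbf{B}}}$ is a morphism of $\SubSfive$, these two identities would show that $Q_{\mathbf{B}}$ is an isomorphism.

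I do not expect a genuine obstacle here: the substance has been front-loaded into \cref{l:normal-in-between} and into the intermediate step of the proof of \cref{thm:Round ideals frame}\eqref{thm:Round ideals frame:item3}, and what remains is mostly bookkeeping — correctly matching the de Vries-algebra complement on $\mathcal{NI}(\mathbf{B})$ with the frame pseudocomplement, checking that $S^{-1}[a]$ is a round ideal, and keeping the direction of relation composition straight. The one point that warrants attention is the passage from $a \in S[U(I)]$ to $I \prec S^{-1}[a]$ (and back), which rests on $S^{-1}[a]$ being round: this is what lets one replace ``$\exists b \in U(I)$ with $b \mathrel{S} a$'' by ``$U(I) \cap S^{-1}[a] \neq \varnothing$'' and then invoke \cref{thm:Round ideals frame}\eqref{thm:Round ideals frame:item3}. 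If one wished to avoid \cref{l:normal-in-between}, an alternative route is to prove that $S[U(I^{**})] = S[U(I)]$ for every round ideal $I$ (the nontrivial inclusion uses \eqref{S7} to push an upper bound of $I$ into $S[U(I)]$) and combine it with the explicit formula for $I^{**}$ from \cref{thm:NI booleanization of SI}.
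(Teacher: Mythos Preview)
Your proposal is correct and follows essentially the same route as the paper: take $\inv{Q_{\mathbf{B}}}$ as the inverse, unwind it to $I \mathrel{\inv{Q_{\mathbf{B}}}} a \iff a \in S[U(I)]$ via \cref{thm:Round ideals frame}\eqref{thm:Round ideals frame:item2}, and then verify the two composition identities using \cref{l:normal-in-between} for $\inv{Q_{\mathbf{B}}} \circ Q_{\mathbf{B}} = S$ and the characterization of $\prec$ from \cref{thm:Round ideals frame}\eqref{thm:Round ideals frame:item3} (and \cref{rem:prec on NIB}) for $Q_{\mathbf{B}} \circ \inv{Q_{\mathbf{B}}} = {\prec}$. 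The only cosmetic difference is that you package the first identity via the reformulation $I \mathrel{\inv{Q_{\mathbf{B}}}} a \iff I \prec S^{-1}[a]$, whereas the paper proves that equivalence inline.
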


\begin{proof}
	Let $T = \inv{Q_{\mathbf{B}}} \colon \mathcal{NI}(\mathbf{B}) \to \mathbf{B}$. By \eqref{eq:T inv} and \cref{thm:Round ideals frame}\eqref{thm:Round ideals frame:item2},
	\begin{equation}\label{QA}
	I \mathrel{T} a \iff \neg a \mathrel{Q_{\mathbf{B}}} I^* \iff \neg a \in \neg S[U(I)] \iff a \in S[U(I)].	
	\end{equation}
	We show that $Q_{\mathbf{B}}$ and $T$ are inverses of each other.
	For this we need to prove that $T \circ Q_{\mathbf{B}} = S$ and $Q_{\mathbf{B}} \circ T = \, \prec$.	
	
	We first show that $T \circ Q_{\mathbf{B}} = S$. 
	For the inclusion $\subseteq$, 
	let $a,b \in B$, $I \in \mathcal{NI}(\mathbf{B})$, and $a \Q_{\mathbf{B}} I \T  b$. Then $a \in I$ and $b \in S[U(I)]$ by \eqref{QA}.
	Thus, $a \S b$.	
	For the inclusion $\supseteq$, let $a,b \in B$ with $a \S b$.  Then $a \in S^{-1}[b]$ and \cref{l:normal-in-between} implies that there is $I \in \NI(\mathbf{B})$ such that $a \in I \prec S^{-1}[b]$.
  By \cref{rem:prec on NIB} and \eqref{QA}, 
  \begin{equation*}
  I\prec S^{-1}[b] \iff U(I) \cap S^{-1} [b]\neq \varnothing \iff  b \in S[U(I)] \iff I \T b.    
  \end{equation*}
  Thus, $a \Q_{\mathbf{B}} I \mathrel{T} b$.
		
	We next show that $Q_{\mathbf{B}} \circ T = \, \prec$.
	Let $I, J \in \mathcal{NI}(\mathbf{B})$.
	By \cref{rem:prec on NIB} and \eqref{QA},
	\begin{align*}
	I \prec J & \iff U(I) \cap J \neq \varnothing \iff U(I) \cap S^{-1}[J] \neq \varnothing \iff S[U(I)] \cap J \neq \varnothing\ \\
	& \iff \exists a \in S[U(I)] \cap J \iff \exists a \in B : I \T a \Q_{\mathbf{B}} J \iff I \mathrel{(Q_{\mathbf{B}} \circ T)} J. 
	\end{align*}
	Thus, $Q_{\mathbf{B}} \colon \mathbf{B} \to \mathcal{NI}(\mathbf{B})$ is an isomorphism.
\end{proof}

\begin{proposition}\label{thm:Q nat iso}
Let $\F\colon  \devS \to \SubSfive$ be the inclusion functor. Then $Q \colon 1_{\SubSfive} \to \F \circ\NI$ is a natural isomorphism.
\end{proposition}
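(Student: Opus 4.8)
The plan is to combine \cref{thm:iso} with a verification of naturality. \cref{thm:iso} already establishes that each component $Q_{\mathbf{B}} \colon \mathbf{B} \to \NI(\mathbf{B})$ is an isomorphism in $\SubSfive$, so it only remains to check that $Q$ is natural, that is, that for every morphism $T \colon \mathbf{B}_1 \to \mathbf{B}_2$ in $\SubSfive$ the square
\[
\begin{tikzcd}
\mathbf{B}_1 \arrow{r}{Q_{\mathbf{B}_1}} \arrow[swap]{d}{T} & \NI(\mathbf{B}_1) \arrow{d}{\NI(T)} \\
\mathbf{B}_2 \arrow[swap]{r}{Q_{\mathbf{B}_2}} & \NI(\mathbf{B}_2)
\end{tikzcd}
\]
commutes in $\SubSfive$; since $\F$ is the inclusion functor, it acts as the identity on this square, so no further work is required.

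First I would unwind the definition of $\NI(T)$. Because $\NI = \bool \circ \RI$ and both functors are contravariant, $\NI(T) = \bool(\RI(T)) \colon \NI(\mathbf{B}_1) \to \NI(\mathbf{B}_2)$, and by the definition of $\bool$ on morphisms this is the relation given by $I \mathrel{\NI(T)} J \iff I \prec \RI(T)(J) = T^{-1}[J]$ for $I \in \NI(\mathbf{B}_1)$ and $J \in \NI(\mathbf{B}_2)$. Note that $T^{-1}[J]$ is a round ideal of $\mathbf{B}_1$ by the argument in the proof of \cref{thm: RI contravariant}, which uses that $T$ is compatible.

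Next I would compute the two composites as relations from $\mathbf{B}_1$ to $\NI(\mathbf{B}_2)$. Fix $a \in B_1$ and $J \in \NI(\mathbf{B}_2)$. On the one hand, $a \mathrel{(Q_{\mathbf{B}_2} \circ T)} J$ holds iff there is $b \in B_2$ with $a \mathrel{T} b$ and $b \in J$, i.e.\ iff $a \in T^{-1}[J]$. On the other hand, $a \mathrel{(\NI(T) \circ Q_{\mathbf{B}_1})} J$ holds iff there is $I \in \NI(\mathbf{B}_1)$ with $a \in I$ and $I \prec T^{-1}[J]$. Hence the square commutes exactly when, for all such $a$ and $J$, one has $a \in T^{-1}[J]$ iff there is a normal round ideal $I$ of $\mathbf{B}_1$ with $a \in I \prec T^{-1}[J]$, which is precisely \cref{l:normal-in-between} applied to the round ideal $T^{-1}[J]$ of $\mathbf{B}_1$. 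Combined with \cref{thm:iso}, this yields that $Q$ is a natural isomorphism.

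I do not expect a real obstacle: the argument is mostly bookkeeping, namely tracking the direction in which relations compose and correctly pushing $T$ through the two contravariant functors $\RI$ and $\bool$, with the single substantive ingredient, \cref{l:normal-in-between}, already in hand. The only point deserving a moment's care is checking that $T^{-1}[J]$ really is a round ideal of $\mathbf{B}_1$ so that \cref{l:normal-in-between} applies, but this was verified in the proof of \cref{thm: RI contravariant} using compatibility of $T$.
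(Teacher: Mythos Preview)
Your proposal is correct and follows essentially the same approach as the paper: reduce to naturality via \cref{thm:iso}, unwind $\NI(T)$ as the relation $I \mathrel{\NI(T)} J \iff I \prec T^{-1}[J]$, compute both composites, and conclude by \cref{l:normal-in-between}. Your write-up is slightly more explicit than the paper's in spelling out the unwinding of $\NI(T)$ through the two contravariant functors and in noting that $T^{-1}[J]$ is a round ideal so that \cref{l:normal-in-between} applies.
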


\begin{proof}
Let 
$T \colon \mathbf{B}_1 \to \mathbf{B}_2$ 
be a morphism in $\SubSfive$. By \cref{thm:iso}, it is sufficient to show that $\NI(T) \circ Q_{\mathbf{B}_1} = Q_{\mathbf{B}_2} \circ T$. (Since $\F$ is the inclusion functor, we omit it from the diagram.)
\[
\begin{tikzcd}
\mathbf{B}_1 \arrow[d, "T"'] \arrow[r, "Q_{\mathbf{B}_1}"] &  \NI(\mathbf{B}_1) \arrow[d, "\NI(T)"] \\
\mathbf{B}_2 \arrow[r, "Q_{\mathbf{B}_2}"] & \NI(\mathbf{B}_2)
\end{tikzcd}
\]
Let $a \in B_1$ and $I \in \NI(\mathbf{B}_2)$. 
We have 
\[
a \mathrel{(\NI(T) \circ Q_{\mathbf{B}_1})} I \iff \exists J \in \NI(\mathbf{B}_1) : a \in J \mbox{ and } J \prec T^{-1}[I],
\]
and
\[
a \mathrel{(Q_{\mathbf{B}_2} \circ T)} I \iff \exists b \in B_2 : a \T b \mbox{ and } b \in I  \iff a \in T^{-1}[I].
\] 
The two conditions are equivalent by \cref{l:normal-in-between}.
\end{proof}

\begin{theorem} \label{thm: NI equivalence}
The functors $\NI \colon \SubSfive \to \devS$ and $\F \colon \devS \to \SubSfive$ are quasi-inverses of each other.
Thus, $\SubSfive$ and $\devS$ are equivalent.
\end{theorem}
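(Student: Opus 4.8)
The plan is to observe that almost all the work has already been done in \cref{thm:Q nat iso}, which supplies a natural isomorphism $Q\colon 1_{\SubSfive}\to\F\circ\NI$; this gives one half of the claim, namely $\F\circ\NI\cong 1_{\SubSfive}$. It then remains to produce a natural isomorphism $\NI\circ\F\cong 1_{\devS}$, and the key point is that $Q$ restricts to one.

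First I would note that for a de Vries algebra $\mathbf{B}$ we have $\F(\mathbf{B})=\mathbf{B}$, and $\NI(\mathbf{B})$ is again a de Vries algebra by \cref{prop:B in subs5 then NI(B) in devs}, so $(\NI\circ\F)(\mathbf{B})=\NI(\mathbf{B})$ is an object of $\devS$. By \cref{thm:iso}, the relation $Q_{\mathbf{B}}\colon\mathbf{B}\to\NI(\mathbf{B})$ is an isomorphism in $\SubSfive$, with inverse $\inv{Q_{\mathbf{B}}}$. Since $\devS$ is a \emph{full} subcategory of $\SubSfive$ and both $\mathbf{B}$ and $\NI(\mathbf{B})$ lie in $\devS$, both $Q_{\mathbf{B}}$ and $\inv{Q_{\mathbf{B}}}$ are morphisms of $\devS$; hence $Q_{\mathbf{B}}$ is an isomorphism in $\devS$.

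Next I would verify naturality of the family $(Q_{\mathbf{B}})_{\mathbf{B}\in\devS}$ with respect to morphisms of $\devS$. This is immediate: every morphism $T\colon\mathbf{B}_1\to\mathbf{B}_2$ of $\devS$ is in particular a morphism of $\SubSfive$, so the required commutativity $\NI(T)\circ Q_{\mathbf{B}_1}=Q_{\mathbf{B}_2}\circ T$ is exactly the naturality square established in \cref{thm:Q nat iso}. Therefore $Q$ restricts to a natural isomorphism $1_{\devS}\to\NI\circ\F$. Combining this with $Q\colon 1_{\SubSfive}\to\F\circ\NI$ shows that $\NI$ and $\F$ are quasi-inverse, so $\SubSfive$ and $\devS$ are equivalent.

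I do not expect a genuine obstacle here; the only point needing care is that $Q_{\mathbf{B}}$, a priori merely a $\SubSfive$-isomorphism, actually lives in $\devS$, which is handled by fullness of $\devS$ in $\SubSfive$. Everything substantive — that $\NI(\mathbf{B})$ is a de Vries algebra, that $Q_{\mathbf{B}}$ is invertible, and that the family $Q$ is natural — was already proved in the preceding lemmas and propositions.
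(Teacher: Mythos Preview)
Your proposal is correct and follows essentially the same approach as the paper: both use \cref{thm:Q nat iso} for $Q\colon 1_{\SubSfive}\to\F\circ\NI$ and then observe that the same components $Q_{\mathbf{B}}$ restrict to a natural isomorphism $1_{\devS}\to\NI\circ\F$. You spell out the role of fullness of $\devS$ in $\SubSfive$ a bit more explicitly than the paper does, but the argument is the same.
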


\begin{proof}
By \cref{thm:Q nat iso}, $Q \colon 1_{\SubSfive} \to \F \circ\NI$ is a natural isomorphism. 
For the same reason, we have a natural isomorphism $Q' \colon 1_{\devS} \to \NI \circ \F$ whose component on $\mathbf{B} \in \devS$ is $Q_\mathbf{B}$.
Thus, $\F\colon  \devS \to \SubSfive$ is a quasi-inverse of $\NI$. 
\end{proof}

\cref{thm: NI equivalence}
gives a direct choice-free proof that $\SubSfive$ is equivalent to $\devS$. We next show that when restricted to compingent algebras, $\NI$ yields the usual MacNeille completion.

\begin{proposition}\label{prop:MacNeille and compingent}
Let $\mathbf{B} = (B,S)$ be an $\mathsf{S5}$-subordination algebra.
\begin{enumerate}[label=\normalfont(\arabic*), ref = \arabic*]
\item\label{prop:MacNeille and compingent:item1} If $\mathbf{B}$ is a compingent algebra, then there is a boolean isomorphism between $\NI(\mathbf{B})$ and the usual MacNeille completion $\overline{B}$ of $B$.
\item\label{prop:MacNeille and compingent:item2} If $\mathbf{B}$ is a de Vries algebra, then there is a structure-preserving bijection between $\mathbf{B}$ and $\NI(\mathbf{B})$.
\end{enumerate}
\end{proposition}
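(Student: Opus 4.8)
For part (1), recall from the discussion following \cref{thm:NI booleanization of SI} that, up to isomorphism, $\overline{B}$ is the complete boolean algebra of \emph{normal ideals} of $B$ (those ideals $J$ with $LU(J)=J$), ordered by inclusion, and that $\NI(\mathbf B)$ is the set of round ideals $I$ with $I=S^{-1}[L(S[U(I)])]$, also ordered by inclusion; both are complete boolean algebras. The plan is to prove that the maps
\[
\Phi\colon\NI(\mathbf B)\to\overline{B},\quad\Phi(I)=LU(I),\qquad\text{and}\qquad\Psi\colon\overline{B}\to\NI(\mathbf B),\quad\Psi(J)=S^{-1}[J]
\]
are mutually inverse order isomorphisms. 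Since an order isomorphism between boolean algebras automatically preserves $\wedge$, $\vee$, $0$, $1$, and complements, this yields the desired boolean isomorphism.

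Everything reduces to two identities, each genuinely using the compingency axiom \eqref{S8} alongside \eqref{S4}--\eqref{S7}: (a) $LU(S^{-1}[J])=LU(J)$ for every ideal $J$ of $B$, and (b) $S^{-1}[L(S[F])]=S^{-1}[L(F)]$ for every filter $F$ of $B$. In both, one inclusion is immediate from $S^{-1}[J]\subseteq J$, resp.\ $S[F]\subseteq F$, which follow from \eqref{S5}. For the reverse inclusion of (a), if some $z\in J$ were not below some $y\in U(S^{-1}[J])$, then $z\wedge\neg y\ne 0$, so \eqref{S8} gives $c\ne 0$ with $c\S(z\wedge\neg y)$; then $c\S z$ and $c\S\neg y$ by \eqref{S4}, so $c\in S^{-1}[J]$ forces $c\le y$, while $c\le\neg y$ by \eqref{S5}, a contradiction. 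For the reverse inclusion of (b), given $x\S w$ with $w\in L(S[F])$, use \eqref{S7} to interpolate $x\S c\S w$ and show $c\in L(F)$: if $c\not\le f$ for some $f\in F$, pick $e\ne 0$ with $e\S(c\wedge\neg f)$ by \eqref{S8}; then $e\S\neg f$, so $f\S\neg e$ by \eqref{S6}, hence $\neg e\in S[F]$ and $w\le\neg e$; now $c\S\neg e$ by \eqref{S4}, so $c\le\neg e$ by \eqref{S5}, while $e\le c$ by \eqref{S4} and \eqref{S5}, forcing $e=0$, a contradiction.

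Granting (a) and (b), the remainder of part (1) is routine. The map $\Phi$ is well defined since $LU(I)$ is always a normal ideal; and $\Psi$ is well defined because $S^{-1}[J]$ is a round ideal by \cref{rem:round ideal}, and for normal $J$, (a) gives $LU(S^{-1}[J])=J$, hence $U(S^{-1}[J])=U(J)$, so (b) applied to $F=U(J)$ yields $S^{-1}[L(S[U(S^{-1}[J])])]=S^{-1}[L(U(J))]=S^{-1}[LU(J)]=S^{-1}[J]$, showing $\Psi(J)\in\NI(\mathbf B)$ via \cref{thm:NI booleanization of SI}. Then $\Phi\Psi=\id$ by (a), and $\Psi\Phi(I)=S^{-1}[LU(I)]=S^{-1}[L(S[U(I)])]=I$ for $I\in\NI(\mathbf B)$, again by (b) applied to $F=U(I)$. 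As $\Phi$ and $\Psi$ both preserve inclusion, they are mutually inverse order isomorphisms, which proves (1). For part (2), if $\mathbf B$ is a de Vries algebra then $B$ is complete, so $\overline{B}\cong B$ via $a\mapsto{\downarrow}a$; composing this with $\Psi$ and noting $S^{-1}[{\downarrow}a]=S^{-1}[a]$ by \eqref{S4}, we obtain a boolean isomorphism $\phi\colon B\to\NI(\mathbf B)$ with $\phi(a)=\{x\in B\mid x\S a\}$. It remains to check that $a\S b$ iff $\phi(a)\prec\phi(b)$: by \cref{rem:prec on NIB}, $\phi(a)\prec\phi(b)$ iff $U(S^{-1}[a])\cap S^{-1}[b]\ne\varnothing$, while (a) with $J={\downarrow}a$ gives $U(S^{-1}[a])={\uparrow}a$, so this holds iff there is $x\ge a$ with $x\S b$, which by \eqref{S4} is equivalent to $a\S b$. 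Hence $\phi$ is a structure-preserving bijection.

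The main obstacle is establishing (a) and (b): these are the only points where the compingency axiom \eqref{S8} is genuinely used, and carrying out the interpolation-and-contradiction arguments cleanly with the operators $L$, $U$, $S[\cdot]$, $S^{-1}[\cdot]$ and negation takes some care. Everything else is mechanical manipulation built on \cref{thm:NI booleanization of SI}, \cref{thm:Round ideals frame}, and \cref{rem:round ideal}.
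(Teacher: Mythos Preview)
Your proof is correct and follows the same overall strategy as the paper: your maps $\Phi$ and $\Psi$ are exactly the paper's $\beta$ and $\alpha$, and the verification that they are mutually inverse order-isomorphisms proceeds in the same way, as does part~(2).

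The only difference is in how the key identities are obtained. The paper first records (citing \cite[Thm.~1.1.4]{deV62}) that compingency yields $b=\bigvee S^{-1}[b]$ for every $b$, from which the slightly stronger equalities $U(S^{-1}[I])=U(I)$ and $L(S[F])=L(F)$ follow in one line each. You instead prove your (a) and (b) directly by contradiction using \eqref{S8}; this is more self-contained but a bit longer. In fact your argument for (a) already proves the paper's stronger statement $U(S^{-1}[J])=U(J)$, and your argument for (b) takes a small unnecessary detour through the interpolant $c$: applying the same contradiction directly to $w$ (rather than to $c$) gives $L(S[F])\subseteq L(F)$, matching the paper's formulation and avoiding the appeal to \eqref{S7}.
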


\begin{proof}
\eqref{prop:MacNeille and compingent:item1}. Since $\mathbf{B}$ is a compingent algebra, it follows from~\cite[Thm.~1.1.4]{deV62} that each $b \in B$ is the supremum of $S^{-1}[b]$. We use this fact to prove that
\begin{equation}\label{eq:US-1I=UI}
U(S^{-1}[I]) = U(I)
\end{equation}
for each ideal $I$ of $B$.
Since $S^{-1}[I] \subseteq I$, we have $U(I) \subseteq U(S^{-1}[I])$. For the reverse inclusion, let $a \in U(S^{-1}[I])$. We show that $a \in U(I)$. Let $b \in I$. Then $S^{-1}[b] \subseteq S^{-1}[I]$. Therefore, $a \in U(S^{-1}[b])$, so $a \ge \bigvee S^{-1}[b]=b$. Thus, $a \in U(I)$. This proves \eqref{eq:US-1I=UI}. A similar argument proves that 
\begin{equation}\label{eq:LSF=LF}
L(S[F]) = L(F)
\end{equation}
for each filter $F$ of $B$.
	By \eqref{eq:US-1I=UI} and \eqref{eq:LSF=LF}, for every normal 
	ideal $I$ of $B$, we have 
	\[
	L(S[U(S^{-1}[I])]) = L(S[U(I)]) = L(U(I)) = I.
	\]
	Thus, applying $S^{-1}$ to both sides yields 
	\[
	S^{-1}[L(S[U(S^{-1}[I])])] = S^{-1}[I].
	\]
	This shows, by \cref{thm:NI booleanization of SI}, that $S^{-1}[I] \in \NI(\mathbf{B})$ for every normal 
	ideal $I$ of $B$.
	This defines an order-preserving map $\alpha \colon \overline{B} \to \NI(\mathbf{B})$.
	 
	Conversely, for every $I \in \NI(\mathbf{B})$, we have that
	$L(U(I))$ is a normal 
	ideal of $B$. 
	This defines an order-preserving map $\beta \colon \NI(\mathbf{B}) \to \overline{B}$.
	By \eqref{eq:US-1I=UI}, for a normal ideal $I$ of $B$, we have
	\[
	L(U(S^{-1}[I])) = L(U(I)) = I.
	\] 
	For a normal round ideal $I$, by \eqref{eq:LSF=LF} and \cref{thm:NI booleanization of SI}, we have
	\[
	S^{-1}[L(U(I))] = S^{-1}[L(S[U(I)]) = I.
	\] 
Thus, $\alpha$ and $\beta$ are order-isomorphisms, hence boolean isomorphisms.

	\eqref{prop:MacNeille and compingent:item2}.
	It is well known (see, e.g., \cite[Thm.~22]{GH09}) that sending $b$ to the downset 
	${\downarrow} b \coloneqq \{ a \in B \mid a \le b \}$
	gives a boolean embedding of $B$ into $\overline{B}$, which is an isomorphism iff $B$ is complete. Composing with $\alpha$ yields the boolean embedding $\iota \colon B \to \NI(\mathbf{B})$ given by $\iota(b)=S^{-1}[b]$. If $\mathbf{B}$ is a de Vries algebra, then $\iota$ becomes a boolean isomorphism by item~\eqref{prop:MacNeille and compingent:item1}.
	It is left to prove that $a \S b$ iff $\iota(a) \prec \iota(b)$. If $a \S b$, then $a \in U(\iota(a)) \cap \iota(b)$, and so $\iota(a) \prec \iota(b)$ by \cref{rem:prec on NIB}. Conversely, suppose that $\iota(a) \prec \iota(b)$. Then $U(\iota(a)) \cap \iota(b) \neq \varnothing$, so there exists $c \in U(\iota(a)) \cap \iota(b)$. Since $a$ is the supremum of $\iota(a) = S^{-1}[a]$, we have that $a \le c \S b$, and hence $a \S b$.
	Thus, $\iota$ is a structure-preserving bijection between $\mathbf{B}$ and $\NI(\mathbf{B})$.
\end{proof}

\begin{remark}
Let $\mathbf{B}=(B,S)$ be a compingent algebra and $\overline{B}$ the MacNeille completion of $B$. By \cite[Rem.~5.11]{BBSV19}, $(\overline{B},\lhd)$ is a de Vries algebra, where
\[
I \lhd J \iff U(I)\cap S^{-1}[J] \neq \varnothing.
\] 
A straightforward verification shows that the boolean isomorphism of \cref{prop:MacNeille and compingent}\eqref{prop:MacNeille and compingent:item1} is an isomorphism of de Vries algebras between $\NI(\mathbf{B})$ and $(\overline{B},\lhd)$.
\end{remark}

\begin{remark}
Let $\mathbf{B}$ be a compingent algebra. Then $Q_{\mathbf{B}} \colon \mathbf{B} \to \NI(\mathbf{B})$ and $\iota \colon \mathbf{B} \to \NI(\mathbf{B})$ are related as follows:
\[
a \Q_{\mathbf{B}} I \iff \iota(a) \prec I
\]
for each $a \in B$ and $I \in \NI(\mathbf{B})$. Indeed, since $\mathbf{B}$ is a compingent algebra, $a = \bigvee S^{-1}[a]$, so ${\uparrow} a = U(S^{-1}[a])$, and hence
\[
a \Q_{\mathbf{B}} I \iff a \in I \iff {\uparrow} a \cap I \neq \varnothing \iff U(S^{-1}[a]) \cap I \neq \varnothing \iff \iota(a) \prec I.
\]
\end{remark}

We finish the section by proving that both $\SubSfive$ and $\devS$ are dually equivalent to $\KRFrmP$.
Let $L \in \KRFrmP$. By \cite[Rem.~3.10]{Bez12}, the map $f_L \colon L \to \RI(\bool L)$ given by 
\[
f_L(a) = \{ b \in \bool L \mid b \prec a \}
\]
is an isomorphism of frames.

\begin{proposition}\label{prop:f nat iso}
$f\colon  1_{\KRFrmP} \to \RI \circ \F \circ \bool$ is a natural isomorphism.
\end{proposition}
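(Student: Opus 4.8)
The plan is to verify that the frame isomorphisms $f_L$ assemble into a natural transformation; since each component $f_L\colon L\to\RI(\bool L)$ is already an isomorphism of frames by \cite[Rem.~3.10]{Bez12}, establishing naturality will immediately yield that $f$ is a natural isomorphism. Fix a preframe homomorphism $\Box\colon L\to M$ between compact regular frames. Unwinding the definitions, $(\RI\circ\F\circ\bool)(\Box)$ is the preframe homomorphism $\RI(\bool(\Box))\colon\RI(\bool L)\to\RI(\bool M)$ that sends a round ideal $I$ of $\bool L$ to $\bool(\Box)^{-1}[I]=\{\,b\in\bool M\mid b\prec\Box c\text{ for some }c\in I\,\}$. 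Thus the task is to show that the square
\[
\begin{tikzcd}
L \arrow[r, "f_L"] \arrow[d, "\Box"'] & \RI(\bool L) \arrow[d, "\RI(\bool(\Box))"] \\
M \arrow[r, "f_M"] & \RI(\bool M)
\end{tikzcd}
\]
commutes, i.e.\ that for every $a\in L$
\[
\{\,b\in\bool M\mid b\prec\Box c\text{ for some }c\in\bool L\text{ with }c\prec a\,\}=\{\,b\in\bool M\mid b\prec\Box a\,\}.
\]

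The inclusion $\subseteq$ is easy: if $b\prec\Box c$ and $c\prec a$, then $c\le a$, hence $\Box c\le\Box a$ since $\Box$ is order-preserving, and $b\prec\Box c\le\Box a$ gives $b\prec\Box a$ (because $x\prec y\le z$ implies $x\prec z$). For $\supseteq$ I would first record that $a$ equals the \emph{directed} join of $C_a\coloneqq\{\,c\in\bool L\mid c\prec a\,\}$. Directedness holds because for $c_1,c_2\in C_a$ one has $c_1\vee c_2\prec a$, hence $(c_1\vee c_2)^{**}\prec a$ (using $x^{***}=x^{*}$), and $(c_1\vee c_2)^{**}\in\bool L$ dominates both $c_i$; and $\bigvee C_a=a$ because $L$ is regular and every $x\prec a$ satisfies $x\le x^{**}\in C_a$. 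Since $\Box$ preserves directed joins, $\Box a=\bigvee\{\,\Box c\mid c\in C_a\,\}$, again a directed join. Now take $b\in\bool M$ with $b\prec\Box a$, i.e.\ $b^{*}\vee\Box a=1$; by the frame distributive law this reads $\bigvee\{\,b^{*}\vee\Box c\mid c\in C_a\,\}=1$, a directed join, so compactness of $M$ yields $b^{*}\vee\Box c=1$ for some single $c\in C_a$, that is, $b\prec\Box c$ with $c\prec a$. This proves commutativity of the square and hence that $f$ is a natural isomorphism.

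The only real obstacle is the $\supseteq$ direction, specifically passing from the single relation $b\prec\Box a$ to a witness $c\in\bool L$ with both $c\prec a$ and $b\prec\Box c$. This is exactly the point at which the three hypotheses on $L$ and $M$ interact: regularity of $L$ makes $a$ a join of booleanization elements well-inside it, closure of $\prec$ under finite joins together with $x^{***}=x^{*}$ makes that family directed so that the preframe homomorphism $\Box$ carries it to a directed join, and compactness of $M$ lets $b\prec\Box a$ be witnessed by a single member of that directed join. Everything else is routine manipulation of ideals, the well-inside relation, and pseudocomplements.
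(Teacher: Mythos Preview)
Your proof is correct and follows essentially the same route as the paper: reduce to the set equality $\{\,b\in\bool M\mid \exists c\in\bool L:\ b\prec\Box c,\ c\prec a\,\}=\{\,b\in\bool M\mid b\prec\Box a\,\}$, handle $\subseteq$ by monotonicity, and for $\supseteq$ write $a$ as the directed join of $\{c\in\bool L\mid c\prec a\}$, push it through the preframe homomorphism $\Box$, and invoke compactness of $M$. The paper simply defers this last step to the argument in the final paragraph of \cref{lem:boolf compatible}, whereas you have spelled out the directedness and compactness details explicitly.
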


\begin{proof}
Let $\Box \colon L \to M$ be a preframe homomorphism and $T=\bool(\Box)$. Because each $f_L$ is an isomorphism, it is enough to show that $\RI(T) \circ f_L = f_M \circ \Box$. (Since $\F$ is the inclusion functor, we omit it from the diagram.)
\[
\begin{tikzcd}
L \arrow[d, "\Box"'] \arrow[r, "f_L"] &  \RI(\bool L) \arrow[d, "\RI(T)"] \\
M \arrow[r, "f_M"] & \RI(\bool M)
\end{tikzcd}
\]
Let $a \in L$. 
We have
\begin{align*}
\RI(T)(f_L(a)) = T^{-1}[f_L(a)] & = \{ b \in \bool M \mid \exists c \in \bool L : b \T c, \ c \prec a \}\\
& = \{ b \in \bool M \mid \exists c \in \bool L : b \prec \Box c, \ c \prec a  \},
\end{align*}
and $f_M(\Box a) = \{ b \in \bool M \mid b \prec \Box a \}$. An argument similar to the last paragraph of the proof of \cref{lem:boolf compatible} yields
\[
\{ b \in \bool M \mid \exists c \in \bool L : b \prec \Box c, \ c \prec a \} = \{ b \in \bool M \mid b \prec \Box a \},
\]
completing the proof.
\end{proof}

\begin{theorem} \label{thm:dual-equivalence}
\mbox{}\begin{enumerate}[label=\normalfont(\arabic*), ref = \arabic*]
\item\label{thm:dual-equivalence:item1} $\RI$ and $\F \circ \bool$ form a dual equivalence between $\SubSfive$ and $\KRFrmP$.
\item\label{thm:dual-equivalence:item2}  $\RI \circ \F$ and $\mathfrak{B}$ form a dual equivalence between $\devS$ and $\KRFrmP$.
\end{enumerate}
We thus obtain the following diagram of equivalences and dual equivalences that commutes up to natural isomorphism.
\[
\begin{tikzcd}[column sep=6ex, row sep=6ex]
& \SubSfive \arrow[dl, "\RI"'] \arrow[dr, shift left=1ex, "\NI"] & \\
\KRFrmP \arrow[rr, "\bool"] & & \devS \arrow[ul, hookrightarrow, shift left=0.5ex, "\F"]
\end{tikzcd}
\]
\end{theorem}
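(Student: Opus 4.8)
The plan is to assemble \cref{thm:dual-equivalence} from the pieces already established, so that essentially no new computation is required. For part~\eqref{thm:dual-equivalence:item1}, I would argue that $\RI\colon\SubSfive\to\KRFrmP$ (a contravariant functor by \cref{thm: RI contravariant}) and $\F\circ\bool\colon\KRFrmP\to\SubSfive$ (a contravariant functor by \cref{prop: B contravariant}, post-composed with the inclusion $\F$) are quasi-inverse to each other. One composite is $\F\circ\bool\circ\RI=\F\circ\NI$, and \cref{thm:Q nat iso} already gives the natural isomorphism $Q\colon 1_{\SubSfive}\to\F\circ\NI$. The other composite is $\RI\circ\F\circ\bool$, and \cref{prop:f nat iso} already gives the natural isomorphism $f\colon 1_{\KRFrmP}\to\RI\circ\F\circ\bool$. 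Hence $\RI$ and $\F\circ\bool$ are a (contravariant) adjoint equivalence, i.e.\ a dual equivalence; this is part~\eqref{thm:dual-equivalence:item1}.

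For part~\eqref{thm:dual-equivalence:item2}, I would combine part~\eqref{thm:dual-equivalence:item1} with \cref{thm: NI equivalence}, which says $\NI$ and $\F$ are quasi-inverse equivalences between $\SubSfive$ and $\devS$. Composing the dual equivalence $\RI\colon\SubSfive\to\KRFrmP$ with the equivalence $\F\colon\devS\to\SubSfive$ gives a dual equivalence $\RI\circ\F\colon\devS\to\KRFrmP$; its quasi-inverse is obtained by composing $\F\circ\bool\colon\KRFrmP\to\SubSfive$ with the quasi-inverse $\NI\colon\SubSfive\to\devS$ of $\F$, yielding $\NI\circ\F\circ\bool=\bool$ (since $\NI=\bool\circ\RI$ and, up to the natural isomorphism $f$, $\RI\circ\F\circ\bool\cong 1$; more directly, $\NI\circ\F\circ\bool=\bool\circ\RI\circ\F\circ\bool$, and $\RI\circ\F\circ\bool\cong 1_{\KRFrmP}$ by \cref{prop:f nat iso}, so this composite is naturally isomorphic to $\bool$). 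Thus $\RI\circ\F$ and $\bool$ form a dual equivalence between $\devS$ and $\KRFrmP$, which is part~\eqref{thm:dual-equivalence:item2}.

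Finally, for the commuting triangle, I would observe that the three natural isomorphisms $Q$, $f$, and $Q'$ (from \cref{thm: NI equivalence}) witness exactly that the diagram of $\RI$, $\bool$, $\NI$, and $\F$ commutes up to natural isomorphism: the triangle with vertices $\SubSfive$, $\KRFrmP$, $\devS$ commutes because $\NI=\bool\circ\RI$ by definition, and the back-and-forth $\F$ and $\NI$ are quasi-inverse by \cref{thm: NI equivalence}, while $\RI\dashv\F\circ\bool$ and $\bool\dashv\RI\circ\F$ by the two parts just proved.

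I do not anticipate a genuine obstacle here: the substantive work (the two natural isomorphisms $f$ and $Q$, well-definedness and functoriality of $\RI$ and $\bool$, and the equivalence $\NI\dashv\F$) has all been done in the preceding results, and \cref{thm:dual-equivalence} is a bookkeeping assembly. The only mild care needed is in part~\eqref{thm:dual-equivalence:item2}: one must make sure the claimed quasi-inverse of $\RI\circ\F$ really is $\bool$ on the nose (not merely $\NI\circ\F\circ\bool$), which follows from the natural isomorphism $f$ together with $\NI=\bool\circ\RI$; I would spell this identification out explicitly to keep the diagram honest.
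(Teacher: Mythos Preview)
Your proposal is correct and takes essentially the same approach as the paper. For part~\eqref{thm:dual-equivalence:item1} you use exactly the paper's argument (the two natural isomorphisms $Q$ and $f$); for part~\eqref{thm:dual-equivalence:item2} the paper is marginally more direct---it simply notes that $\bool\circ\RI\circ\F=\NI\circ\F$ and invokes the restricted natural isomorphism $Q'\colon 1_{\devS}\to\NI\circ\F$ from \cref{thm: NI equivalence} together with $f$ from part~\eqref{thm:dual-equivalence:item1}---whereas you reach the same conclusion by composing equivalences and then identifying $\NI\circ\F\circ\bool$ with $\bool$ via $f$, but the content is identical.
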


\begin{proof}
\eqref{thm:dual-equivalence:item1}. By definition of $\NI$, we have $\F \circ \bool \circ \RI = \F \circ \NI$. Therefore, $Q \colon 1_{\SubSfive} \to \F \circ \bool \circ \RI$ is a natural isomorphism by \cref{thm:Q nat iso}. Moreover, $f\colon  1_{\KRFrmP} \to \RI \circ \F \circ \bool$ is a natural isomorphism by \cref{prop:f nat iso}. Thus, $\F \circ \bool \colon \KRFrmP \to \SubSfive$ is a quasi-inverse of $\RI$. 

\eqref{thm:dual-equivalence:item2}. By \cref{thm:Q nat iso}, $Q \colon 1_{\SubSfive} \to \F \circ \bool \circ \RI$ is a natural isomorphism. 
For the same reason, we have a natural isomorphism $Q' \colon 1_{\devS} \to \bool \circ \RI \circ \F$ whose component on $\mathbf{B} \in \devS$ is $Q_\mathbf{B}$.
Thus, $\bool \colon \KRFrmP \to \devS$ is a quasi-inverse of $\RI \circ \F$.
\end{proof}

\section{Continuous subordinations}\label{sec: cont subs}

In \cref{sec:MacNeille} we gave a direct choice-free proof that $\SubSfive$ is equivalent to $\devS$ and dually equivalent to $\KRFrmP$. Morphisms of each of these categories encode closed relations between compact Hausdorff spaces. In this section we study the wide subcategories of these categories whose morphisms encode continuous relations between compact Hausdorff spaces.

Recalling from \cref{rem:functors equivalences} that we have an equivalence $\mathcal{Q}\colon \StoneER \to \KHausR$, we first characterize when $\mathcal{Q}(R)$ is a continuous relation for an arbitrary morphism $R$ in $\StoneER$.
We then use the equivalence $\Clop \colon \StoneER \to \SubSfive$ to encode this characterization in the language of $\mathsf{S5}$-subordination algebras.

\begin{definition}
Let $R$ be a binary relation on a set $X$ and $U \subseteq X$. Following the standard notation in modal logic, we write $\Box_R U = X\setminus R^{-1}[X\setminus U]$.
If $R$ is an equivalence relation, we say that $U$ is \emph{$R$-saturated} if $R[U]=U$.
\end{definition}

\begin{remark}\label{rem:box}
\mbox{}\begin{enumerate}
\item\label{rem:box:item1} If $R$ is a closed relation and $U$ is open, then $\Box_R U$ is open.
\item\label{rem:box:item2} If $R$ is an equivalence relation, then $\Box_R U = X\setminus R[X\setminus U]$ and is the largest $R$-saturated subset of $U$. Therefore, $U$ is $R$-saturated iff $\Box_R U=U$. 
\end{enumerate}
\end{remark}

\begin{lemma} \label{t:correspondence continuous rel and sub}
	Let $R \colon (X_1, E_1) \to (X_2, E_2)$ be a morphism in $\StoneER$.
	The following are equivalent.
	\begin{enumerate}[label=\normalfont(\arabic*), ref = \arabic*]
		\item \label{i:continuous-relation}
		The relation $\mathcal{Q}(R) \colon X_1/E_1 \to X_2/E_2$ is a continuous relation.
		\item \label{i:saturated}
		If $V$ is an $E_2$-saturated open in $X_2$, then $R^{-1}[V]$ is open in $X_1$.
		\item \label{i:clopens}
		If $B_1,B_2\subseteq X_2$ are clopen with $E_2[B_1] \subseteq B_2$, then there is a clopen set $A\subseteq X_1$ such that $R^{-1}[B_1] \subseteq A \subseteq R^{-1}[B_2]$.
		\item \label{i:continuous-algebraic}
		If $B_1,B_2\subseteq X_2$ are clopen with $E_2[B_1] \subseteq B_2$, then there is a clopen set $A\subseteq X_1$ such that $A \in \inv{S_R}[B_1]$ and $\inv{S_R}[B_2] \subseteq S_{E_1}[A]$.
	\end{enumerate}
\end{lemma}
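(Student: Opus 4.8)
The plan is to prove the chain of implications $\eqref{i:continuous-relation} \Leftrightarrow \eqref{i:saturated}$, $\eqref{i:saturated} \Leftrightarrow \eqref{i:clopens}$, and $\eqref{i:clopens} \Leftrightarrow \eqref{i:continuous-algebraic}$, since $\eqref{i:clopens}$ and $\eqref{i:continuous-algebraic}$ are literally the same statement phrased in spatial versus algebraic language. Throughout I will use that $\pi_2 \colon X_2 \to X_2/E_2$ sets up a bijection between $E_2$-saturated opens of $X_2$ and opens of $X_2/E_2$ (via $V \mapsto \pi_2[V]$, with inverse $W \mapsto \pi_2^{-1}[W]$), and similarly for $\pi_1$, together with the identity $\mathcal{Q}(R)^{-1}[\pi_2[V]] = \pi_1[R^{-1}[V]]$ for $E_2$-saturated $V$, which follows from the definition $\mathcal{Q}(R) = \pi_2 \circ R \circ \conv{\pi_1}$ and compatibility $R = R \circ E_1$. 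The key point making the preimage computation clean is that $R^{-1}[V]$ is automatically $E_1$-saturated when $V$ is $E_2$-saturated, again by compatibility ($E_1 \circ R^{-1} = (R \circ E_1)^{-1} \subseteq R^{-1}$, using $R = R \circ E_1$).

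For $\eqref{i:continuous-relation} \Leftrightarrow \eqref{i:saturated}$: assuming $\mathcal{Q}(R)$ continuous, given an $E_2$-saturated open $V$, the set $\pi_2[V]$ is open in $X_2/E_2$, hence $\mathcal{Q}(R)^{-1}[\pi_2[V]]$ is open in $X_1/E_1$, hence its $\pi_1$-preimage is open in $X_1$; but that preimage equals $R^{-1}[V]$ because $R^{-1}[V]$ is $E_1$-saturated. Conversely, given an arbitrary open $W \subseteq X_2/E_2$, write $W = \pi_2[V]$ for the $E_2$-saturated open $V = \pi_2^{-1}[W]$; then $\mathcal{Q}(R)^{-1}[W] = \pi_1[R^{-1}[V]]$, which is open in the quotient precisely because $R^{-1}[V]$ is an $E_1$-saturated open, using that $\pi_1$ is a quotient map and open saturated sets descend to opens. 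For $\eqref{i:saturated} \Rightarrow \eqref{i:clopens}$: given clopens $B_1, B_2$ with $E_2[B_1] \subseteq B_2$, set $V = \Box_{E_2} B_2$, which by \cref{rem:box} is an $E_2$-saturated open containing $B_1$ (since $E_2[B_1] \subseteq B_2$ gives $B_1 \subseteq \Box_{E_2}B_2$) and contained in $B_2$; then $R^{-1}[V]$ is open with $R^{-1}[B_1] \subseteq R^{-1}[V] \subseteq R^{-1}[B_2]$, and since $R^{-1}[B_1]$ is closed ($R$ closed, $B_1$ compact) and contained in the open set $R^{-1}[V]$, compactness and zero-dimensionality of the Stone space $X_1$ let me squeeze a clopen $A$ with $R^{-1}[B_1] \subseteq A \subseteq R^{-1}[V] \subseteq R^{-1}[B_2]$. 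For $\eqref{i:clopens} \Rightarrow \eqref{i:saturated}$: given an $E_2$-saturated open $V$, write $V = \bigcup \{ B \mid B \text{ clopen}, B \subseteq V \}$; for each such $B$, $E_2$-saturation of $V$ gives $E_2[B] \subseteq V$, so by compactness $E_2[B]$ is covered by finitely many clopens inside $V$, whose union $B'$ is a clopen with $E_2[B] \subseteq B' \subseteq V$; applying $\eqref{i:clopens}$ to the pair $B \subseteq B'$ yields a clopen $A_B$ with $R^{-1}[B] \subseteq A_B \subseteq R^{-1}[B']\subseteq R^{-1}[V]$, and then $R^{-1}[V] = \bigcup_B R^{-1}[B] \subseteq \bigcup_B A_B \subseteq R^{-1}[V]$ exhibits $R^{-1}[V]$ as a union of clopens, hence open.

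Finally, $\eqref{i:clopens} \Leftrightarrow \eqref{i:continuous-algebraic}$ is a direct translation: under $\Clop$, the relation $S_R$ on clopens is $A \mathrel{S_R} B \iff R[A] \subseteq B$, so by the definition of $\inv{S_R}$ in \eqref{eq:T inv} we have $B_1 \mathrel{\inv{S_R}} A \iff R[X_1 \setminus A] \subseteq X_2 \setminus B_1 \iff R^{-1}[B_1] \subseteq A$; thus $A \in \inv{S_R}[B_1]$ says exactly $R^{-1}[B_1] \subseteq A$, and $\inv{S_R}[B_2] \subseteq S_{E_1}[A]$ says that every clopen $A'$ with $R^{-1}[B_2] \subseteq A'$ also satisfies $E_1[A] \subseteq A'$, i.e.\ $E_1[A] \subseteq \bigcap \{ A' \text{ clopen} \mid R^{-1}[B_2] \subseteq A'\} = R^{-1}[B_2]$ (the last equality by zero-dimensionality, since $R^{-1}[B_2]$ is closed); combining, $A \subseteq E_1[A] \subseteq R^{-1}[B_2]$ recovers $\eqref{i:clopens}$, and conversely the clopen $A$ produced in $\eqref{i:clopens}$ can be enlarged to $E_1[A]$-saturated form, or one observes directly that $A \subseteq R^{-1}[B_2]$ with $R^{-1}[B_2]$ being $E_1$-saturated forces $E_1[A] \subseteq R^{-1}[B_2]$, giving $\eqref{i:continuous-algebraic}$. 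The main obstacle I anticipate is bookkeeping the saturation conditions correctly — in particular verifying that $R^{-1}[V]$ inherits $E_1$-saturation from $V$ and that $R^{-1}[B_2]$ is $E_1$-saturated (both via the compatibility identity $R \circ E_1 = R$), since everything downstream relies on being able to pass freely between subsets of $X_i$ and subsets of the quotient $X_i/E_i$.
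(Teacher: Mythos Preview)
Your proposal is correct and follows essentially the same route as the paper's proof: the same three equivalence blocks $(1)\Leftrightarrow(2)$, $(2)\Leftrightarrow(3)$, $(3)\Leftrightarrow(4)$, with the same key ingredients (the quotient correspondence for saturated opens, the use of $\Box_{E_2}B_2$ to interpolate, the clopen sandwiching via compactness and zero-dimensionality, and the translation of $\inv{S_R}$ and $S_{E_1}$ into containments involving $R^{-1}$ and $E_1$). One minor remark: the $E_1$-saturation of $R^{-1}[V]$ holds for \emph{any} subset $V\subseteq X_2$ by $R\circ E_1=R$, not just for $E_2$-saturated ones; this slightly streamlines the bookkeeping you anticipate but does not affect the correctness of your argument.
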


\begin{proof}
	
	\eqref{i:continuous-relation}$\Leftrightarrow$\eqref{i:saturated}.
	Let $\pi_i \colon X_i \to X_i/E_i$ be the quotient maps for $i=1,2$.
		\[
		\begin{tikzcd}
			X_1 \arrow{r}{R} \arrow[swap]{d}{\pi_1} & X_2 \arrow{d}{\pi_2}\\
			X_1/E_1 \arrow[swap]{r}{\mathcal{Q}(R)} & X_2/E_2
		\end{tikzcd}
		\]	
	 Then $\mathcal{Q}(R)^{-1}[U] = \pi_1[R^{-1}[\pi_2^{-1}[U]]]$ for each $U \subseteq X_2/E_2$. 
The $R$-inverse image of any subset of $X_2$ is $E_1$-saturated by the compatibility of $R$.
Thus, $R^{-1}[\pi_2^{-1}[U]]$ is open iff $\pi_1[R^{-1}[\pi_2^{-1}[U]]]$ is open for each $U$ open of $X_2/E_2$.
Therefore, $\mathcal{Q}(R)$ is continuous iff $R^{-1}[\pi_2^{-1}[U]]$ is open for each $U$ open of $X_2/E_2$. Since $V$ is an $E_2$-saturated open in $X_2$ iff $V=\pi_2^{-1}[U]$ for some $U$ open of $X_2/E_2$, the equivalence follows.

	\eqref{i:saturated}$\Rightarrow$\eqref{i:clopens}.
	Suppose $B_1,B_2\subseteq X_2$ are clopens with $E_2[B_1] \subseteq B_2$.
	Let $V = \Box_{E_2} B_2$.
	Then $V$ is an $E_2$-saturated open.
	Since $E_2[B_1] \subseteq B_2$, we have that $B_1 \subseteq V$.
	Therefore, $R^{-1}[B_1] \subseteq R^{-1}[V]$.
	The set $R^{-1}[B_1]$ is closed and $R^{-1}[V]$ is open by item~\eqref{i:saturated}.
	Thus, there is a clopen set	$A\subseteq X_1$ such that $R^{-1}[B_1] \subseteq A \subseteq R^{-1}[V]$.
	Since $V \subseteq B_2$, we have $R^{-1}[V] \subseteq R^{-1}[B_2]$.
	Hence, $A \subseteq R^{-1}[B_2]$. 
	This proves item~\eqref{i:clopens}.
	
	\eqref{i:clopens}$\Rightarrow$\eqref{i:saturated}.
	Suppose that $V$ is an $E_2$-saturated open subset of $X_2$.
	Since $V = \bigcup\{ B \in \Clop(X_2) \mid B \subseteq V \}$, we have
	\[
	R^{-1}[V] = \bigcup \{R^{-1}[B] \mid B \in \Clop(X_2), B \subseteq V\}.
	\]
	Thus, it is enough to prove that for every clopen subset $B$ of $X_2$ contained in $V$, there is an open subset $U_B$ of $X_1$ such that $R^{-1}[B] \subseteq U_B \subseteq R^{-1}[V]$ (because then $R^{-1}[V] = \bigcup \{U_B \mid B \in \Clop(X_2), B \subseteq V\}$).
	Let $B$ be a clopen subset of $X_2$ contained in $V$.
	Since $V$ is $E_2$-saturated, $E_2[B] \subseteq V$.
	Because $E_2[B]$ is closed and $V$ is open, there is a clopen subset $B'$ of $X_2$ such that $E_2[B] \subseteq B' \subseteq V$.
	By item~\eqref{i:clopens},	there is a clopen set $A\subseteq X_1$ such that $R^{-1}[B] \subseteq A \subseteq R^{-1}[B']$.
	Since $B' \subseteq V$, we have $R^{-1}[B'] \subseteq R^{-1}[V]$, so
	$A \subseteq R^{-1}[V]$.
	Therefore, we have found an open subset $A$ of $X_1$ such that $R^{-1}[B] \subseteq A \subseteq R^{-1}[V]$.
	Hence, item~\eqref{i:saturated} holds.
	
	\eqref{i:clopens}$\Leftrightarrow$\eqref{i:continuous-algebraic}. This follows from the following two claims.
	
	\begin{claim}\label{claim:6.2}
		For clopen sets $A\subseteq X_1$ and $B\subseteq X_2$, we have $R^{-1}[B] \subseteq A$ iff $A \in \inv{S_R}[B]$.
	\end{claim}
	
	\begin{proof}[Proof of claim]
		This follows from the equality $\inv{S_R} = S_{\conv{R}}$, shown in the proof of \cite[Thm.~2.14]{ABC22a}.
	\end{proof}
	
	\begin{claim}
		For clopen sets $A\subseteq X_1$ and $B\subseteq X_2$, we have $A \subseteq R^{-1}[B]$ iff $\inv{S_R}[B] \subseteq S_{E_1}[A]$.
	\end{claim}
	
	\begin{proof}[Proof of claim]
		Let $A\subseteq X_1$ and $B\subseteq X_2$ be clopen sets. Then
		\begin{align*}
			&\inv{S_R}[B] \subseteq S_{E_1}[A]\\
			& \iff \forall A' \in \Clop(X_1), \ B \mathrel{\inv{\S_R}} A' \text{ implies } A \S_{E_1} A'\\
			& \iff \forall A' \in \Clop(X_1), \ R^{-1}[B] \subseteq A' \text{ implies } E_1[A] \subseteq A' & \text{(by \cref{claim:6.2})}\\
			& \iff E_1[A] \subseteq \bigcap \{ A' \in \Clop(X_1) \mid R^{-1}[B] \subseteq A' \}\\
			& \iff E_1[A] \subseteq R^{-1}[B] & \text{(since $R^{-1}[B]$ is closed)}\\
			& \iff A \subseteq R^{-1}[B] & \text{(since $R^{-1}[B]$ is $E_1$-saturated).}
		\end{align*}
	\end{proof}
This concludes the proof.
\end{proof}

The next definition encodes \cref{t:correspondence continuous rel and sub}\eqref{i:continuous-algebraic} in the language of $\mathsf{S5}$-subordination algebras. By \cref{t:correspondence continuous rel and sub}\eqref{i:continuous-relation}, this condition is equivalent to the corresponding relation between compact Hausdorff spaces being continuous. Because of this, we call such compatible subordinations continuous.

\begin{definition}
Let $T \colon (B_1, S_1) \to (B_2, S_2)$ be a compatible subordination between $\mathsf{S5}$-subordination algebras.
We say that $T$ is \emph{continuous} if the following holds:
\[
\forall b_1, b_2 \in B_2 \text{ with } b_1 \S_2 b_2 \text{ there is } a \in \inv{T}[b_1] \text{ such that } \inv{T}[b_2] \subseteq S_1[a].
\]
\end{definition}

\begin{lemma}\label{rem:cont}
Let $T \colon (B_1, S_1) \to (B_2, S_2)$ be a compatible subordination.
\begin{enumerate}[label=\normalfont(\arabic*), ref = \arabic*]
\item\label{rem:cont:item1} The following are equivalent:
\begin{enumerate}[label=\normalfont(\alph*), ref = 1\alph*]
\item\label{rem:cont:item1:Tcontinuous} $T$ is continuous.
\item\label{rem:cont:item1:cond1} $\forall b_1, b_2 \in B_2 \text{ with } b_1 \S_2 b_2 \text{ there is } a \in \inv{T}[b_1] \text{ such that } a \in L(\inv{T}[b_2])$.
\item\label{rem:cont:item1:cond2} $\forall b_1, b_2 \in B_2 \text{ with } b_1 \S_2 b_2 \text{ there is } a \in T^{-1}[b_2] \text{ such that } a \in U(T^{-1}[b_1])$.
\end{enumerate}
\item\label{rem:cont:item2} If $B_1$ is complete, then the following are equivalent:
\begin{enumerate}[label=\normalfont(\alph*), ref = 2\alph*]
\item\label{rem:cont:item2:Tcontinuous} $T$ is continuous.
\item\label{rem:cont:item2:cond1} $\forall b_1, b_2 \in B_2 \text{ with } b_1 \S_2 b_2 \text{ we have } b_1 \mathrel{\inv{T}} (\bigwedge \inv{T}[b_2])$.
\item\label{rem:cont:item2:cond2} $\forall b_1, b_2 \in B_2 \text{ with } b_1 \S_2 b_2 \text{ we have } (\bigvee\T^{-1}[b_1]) \T b_2$.
\end{enumerate}
\end{enumerate}
\end{lemma}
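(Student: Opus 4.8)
The plan is to prove the equivalences in each item by unwinding the definition of $\inv{T}$ in \eqref{eq:T inv} and using elementary properties of the operators $U$, $L$, and $\neg$ recorded before \cref{lem:operators and negations}, together with \cref{lem:operators and negations} itself. Throughout, fix $b_1, b_2 \in B_2$ with $b_1 \S_2 b_2$.

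For item~\eqref{rem:cont:item1}, I would first observe that by \eqref{eq:T inv}, $a \in \inv{T}[b_1]$ iff $b_1 \mathrel{\inv{T}} a$ iff $\neg a \T \neg b_1$ iff $\neg a \in T^{-1}[\neg b_1]$, so $\inv{T}[b_1] = \neg T^{-1}[\neg b_1]$ (and likewise for $b_2$). The equivalence \eqref{rem:cont:item1:Tcontinuous}$\Leftrightarrow$\eqref{rem:cont:item1:cond1} then reduces to showing $S_1[a] \supseteq \inv{T}[b_2]$ is equivalent to $a \in L(\inv{T}[b_2])$: the forward direction is immediate since $S_1 \subseteq {\le}$ by \eqref{S5}; the converse uses that $\inv{T}[b_2] = T^{-1}[S_2[b_2]]$ (by compatibility $T = T \circ S_1$, hence $\inv{T} = S_1 \circ \inv{T}$, so $\inv{T}[b_2]$ is a round ideal, wait—more carefully, $\inv{T}$ compatible gives $\inv{T} = S_1 \circ \inv{T}$, so every element of $\inv{T}[b_2]$ is $S_1$-below some other element of $\inv{T}[b_2]$), and then $a \le c$ for all $c \in \inv{T}[b_2]$ together with $c \S_1 c'$ for a suitable $c'$ gives $a \S_1 c'$ by \eqref{S4}, hence $\inv{T}[b_2] \subseteq S_1[a]$. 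For \eqref{rem:cont:item1:cond1}$\Leftrightarrow$\eqref{rem:cont:item1:cond2}, translate via $\inv{T}[b_i] = \neg T^{-1}[\neg b_i]$ and the identities $\neg L(X) = U(\neg X)$, $a \in \neg X \Leftrightarrow \neg a \in X$: the condition "$\exists a \in \inv{T}[b_1] \cap L(\inv{T}[b_2])$" becomes "$\exists a$ with $\neg a \in T^{-1}[\neg b_1]$ and $\neg a \in U(T^{-1}[\neg b_2])$", i.e.\ "$\exists a' \in T^{-1}[\neg b_1] \cap U(T^{-1}[\neg b_2])$"; now relabel using $b_1 \S_2 b_2 \Leftrightarrow \neg b_2 \S_2 \neg b_1$ from \eqref{S6}, so setting $b_1' = \neg b_2$, $b_2' = \neg b_1$ we have $b_1' \S_2 b_2'$ and the condition reads "$\exists a' \in T^{-1}[b_2'] \cap U(T^{-1}[b_1'])$", which is \eqref{rem:cont:item1:cond2}.

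For item~\eqref{rem:cont:item2}, assume $B_1$ is complete. The equivalence \eqref{rem:cont:item2:Tcontinuous}$\Leftrightarrow$\eqref{rem:cont:item2:cond1}: using \eqref{rem:cont:item1:cond1}, continuity says $\inv{T}[b_1]$ meets $L(\inv{T}[b_2])$; since $L(\inv{T}[b_2]) = {\downarrow}(\bigwedge \inv{T}[b_2])$ in a complete boolean algebra, and since $\inv{T}[b_1]$ is a down-set (being of the form $\inv{T}[b_1]$, an ideal by the remarks around \cref{rem:round ideal}), this meeting is equivalent to $\bigwedge \inv{T}[b_2] \in \inv{T}[b_1]$, i.e.\ $b_1 \mathrel{\inv{T}} \bigwedge \inv{T}[b_2]$. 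Then \eqref{rem:cont:item2:cond1}$\Leftrightarrow$\eqref{rem:cont:item2:cond2} follows by the same de~Morgan translation as in item~\eqref{rem:cont:item1}, noting $\bigwedge \inv{T}[b_2] = \bigwedge \neg T^{-1}[\neg b_2] = \neg \bigvee T^{-1}[\neg b_2]$, combined with \eqref{S6} and \eqref{eq:T inv} to pass between $\inv{T}$-statements and $T$-statements.

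The main obstacle I expect is the careful bookkeeping in item~\eqref{rem:cont:item1} showing that "$\inv{T}[b_2] \subseteq S_1[a]$" can be replaced by "$a \in L(\inv{T}[b_2])$" — this is where compatibility of $T$ (specifically $\inv{T} = S_1 \circ \inv{T}$, equivalently roundness of the down-sets $\inv{T}[b_2]$) is essential, since without it a lower bound of $\inv{T}[b_2]$ need not be $S_1$-below each element. Everything else is a routine, if slightly fiddly, manipulation of the order-theoretic operators and the duality $X \mapsto \neg X$.
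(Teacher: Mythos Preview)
Your approach is essentially the paper's: for (1a)$\Leftrightarrow$(1b) you reduce to showing $\inv{T}[b_2]\subseteq S_1[a]$ iff $a\in L(\inv{T}[b_2])$ via \eqref{S5} and compatibility of $\inv{T}$, and for (1b)$\Leftrightarrow$(1c) you translate through $\inv{T}[b_i]=\neg T^{-1}[\neg b_i]$ and \eqref{S6}, exactly as the paper does; item~(2) then follows by specializing to the complete case.

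There is one slip to fix. You write that $\inv{T}[b_1]$ is a down-set (an ideal), but it is an \emph{up-set} (a filter): by \eqref{S4} applied to the subordination $\inv{T}$, if $b_1\mathrel{\inv{T}}a$ and $a\le a'$ then $b_1\mathrel{\inv{T}}a'$. This matters in your (2a)$\Leftrightarrow$(2b) step: from $a\in\inv{T}[b_1]$ with $a\le\bigwedge\inv{T}[b_2]$ you want to conclude $\bigwedge\inv{T}[b_2]\in\inv{T}[b_1]$, which needs $\inv{T}[b_1]$ to be upward closed, not downward closed. With ``up-set'' in place of ``down-set'' your argument is correct (and dually, $T^{-1}[b_2]$ is a down-set, which is what makes (1c)$\Leftrightarrow$(2c) work). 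A related minor wording issue: in the (1a)$\Rightarrow$(1b) converse you say ``every element of $\inv{T}[b_2]$ is $S_1$-below some other element of $\inv{T}[b_2]$'', but compatibility $\inv{T}=S_1\circ\inv{T}$ gives the opposite: every $a'\in\inv{T}[b_2]$ has some $a''\in\inv{T}[b_2]$ with $a''\mathrel{S_1}a'$. Your subsequent use of this fact is correct, so only the phrasing needs adjusting.
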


\begin{proof}
\eqref{rem:cont:item1:Tcontinuous}$\Leftrightarrow$\eqref{rem:cont:item1:cond1}. It is enough to prove that $\inv{T}[b_2] \subseteq S_1[a]$ is equivalent to $a \in L(\inv{T}[b_2])$. For the left-to-right implication, 
by \eqref{S5} we have $S_1[a] \subseteq U(a)$, and so $\inv{T}[b_2] \subseteq S_1[a]$ implies $\inv{T}[b_2] \subseteq U(a)$, which is equivalent to $a \in L(\inv{T}[b_2])$.
For the right-to-left implication, suppose $a \in L(\inv{T}[b_2])$ and let $a' \in \inv{T}[b_2]$.
Since $\inv{T}$ is a compatible subordination, there is $a'' \in \inv{T}[b_2]$ such that $a'' \S_1 a'$.
Therefore, $a \leq a'' \S_1 a'$, which implies $a \S_1 a'$, and hence $a' \in S_1[a]$.

\eqref{rem:cont:item1:cond1}$\Leftrightarrow$\eqref{rem:cont:item1:cond2}.  Suppose that \eqref{rem:cont:item1:cond1} holds, and let $b_1, b_2 \in B_2$ be such that $b_1 \S_2 b_2$.
Then, by \eqref{S6}, $\lnot b_2 \S_2 \lnot b_1$.
Therefore, by \eqref{rem:cont:item1:cond1} there is $a \in \inv{T}[\lnot b_2]$ such that $a \in L(\inv{T}[\lnot b_1])$.
The condition $a \in \inv{T}[\lnot b_2]$ is equivalent to $\lnot a \in T^{-1}[b_2]$.
Similarly, 
the condition $a \in L(\inv{T}[\lnot b_1])$ is equivalent to $\lnot a \in U(T^{-1}[b_1])$.
Thus, \eqref{rem:cont:item1:cond1} implies \eqref{rem:cont:item1:cond2}, and the converse is proved similarly.

\eqref{rem:cont:item2}. If $B$ is complete, then \eqref{rem:cont:item1:cond1}$\Leftrightarrow$\eqref{rem:cont:item2:cond1} and \eqref{rem:cont:item1:cond2}$\Leftrightarrow$\eqref{rem:cont:item2:cond2}. Thus, the result follows from item~\eqref{rem:cont:item1}.
\end{proof}

\begin{lemma}\label{lem:continuous sub id and comp}
\mbox{}\begin{enumerate}[label=\normalfont(\arabic*), ref = \arabic*]
\item\label{lem:continuous sub id and comp:item1} Let $(B,S)$ be an $\mathsf{S5}$-subordination algebra. The identity morphism $S \colon (B,S) \to (B,S)$ in $\SubSfive$ is continuous.
\item\label{lem:continuous sub id and comp:item2} 	%Let $(B_1, S_1)$, $(B_2, S_2)$ and $(B_3, S_3)$ be $\mathsf{S5}$-subordination algebras and 
Let $T_1 \colon (B_1, S_1) \to (B_2, S_2)$ and $T_2 \colon (B_2, S_2) \to (B_3, S_3)$ be continuous compatible subordinations between $\mathsf{S5}$-subordination algebras.
Then $T_2 \circ T_1 \colon (B_1, S_1) \to (B_3, S_3)$ is a continuous compatible subordination.
\end{enumerate}
\end{lemma}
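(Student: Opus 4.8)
For part~\eqref{lem:continuous sub id and comp:item1}, the plan is to unwind the definition of continuity for the identity morphism $S \colon (B,S) \to (B,S)$, whose converse $\inv{S}$ is computed via \eqref{eq:T inv} together with \eqref{S6}, giving $b \mathrel{\inv{S}} a \iff \neg a \S \neg b \iff a \S b$, so that $\inv{S}[b] = S^{-1}[b]$. Thus I need: for all $b_1 \S b_2$ there is $a$ with $b_1 \S a$ and $S^{-1}[b_2] \subseteq S[a]$. Here I would simply use \eqref{S7}: from $b_1 \S b_2$ pick $a$ with $b_1 \S a$ and $a \S b_2$. Then for any $a' \in S^{-1}[b_2]$, i.e.\ $a' \S b_2$, I want $a \S a'$; but this is not automatic, so instead I would verify continuity through the equivalent condition \eqref{rem:cont:item1:cond2} of \cref{rem:cont}: I must find $a \in S^{-1}[b_2]$ with $a \in U(S^{-1}[b_1])$. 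Since $b_1 \S b_2$, taking $a = b_1$ works: $b_1 \in S^{-1}[b_2]$, and if $c \S b_1$ then $c \le b_1$ by \eqref{S5}, so $b_1 \in U(S^{-1}[b_1])$. This settles part~\eqref{lem:continuous sub id and comp:item1}.

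For part~\eqref{lem:continuous sub id and comp:item2}, the cleanest route is again through the characterization \eqref{rem:cont:item1:cond2}: $T$ is continuous iff for all $b_1 \S b_2$ there is $a \in T^{-1}[b_2]$ with $a \in U(T^{-1}[b_1])$. Let $b_1 \S_3 b_2$. First I would interpolate using \eqref{S7} in $B_3$ to get $c_1, c_2$ with $b_1 \S_3 c_1 \S_3 c_2 \S_3 b_2$ (or as many intermediate elements as the argument requires). Applying continuity of $T_2$ to $c_1 \S_3 c_2$ yields $a_2 \in T_2^{-1}[c_2]$ with $a_2 \in U(T_2^{-1}[c_1])$; I would also want $a_2$ to sit strictly below something, so I may apply continuity of $T_2$ to the pair $b_1 \S_3 c_2$ after re-interpolating, obtaining an element of $T_2^{-1}$ that is both an upper bound of $T_2^{-1}[b_1]$ and $S_2$-below an element still in $T_2^{-1}[c_2]$. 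Then I feed this into continuity of $T_1$: applied to an $S_2$-pair $a_2' \S_2 a_2$ with $a_2 \in T_2^{-1}[b_2]$, it produces $a_1 \in T_1^{-1}[a_2]$ with $a_1 \in U(T_1^{-1}[a_2'])$. Finally I would check that $a_1 \in (T_2 \circ T_1)^{-1}[b_2]$ — which holds because $a_1 \mathrel{T_1} a_2$ and $a_2 \mathrel{T_2} b_2$ — and that $a_1$ is an upper bound of $(T_2 \circ T_1)^{-1}[T_1$-preimages$]$; this last inclusion is where the bookkeeping concentrates, since $(T_2\circ T_1)^{-1}[b_1] = T_1^{-1}[T_2^{-1}[b_1]]$ and I must translate ``upper bound of $T_2^{-1}[b_1]$'' through $T_1^{-1}$ into ``upper bound of $T_1^{-1}[T_2^{-1}[b_1]]$'', which uses \eqref{S4}/\eqref{S5} and the upper-bound property of $a_1$.

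The main obstacle I anticipate is precisely this last translation step in part~\eqref{lem:continuous sub id and comp:item2}: getting the upper bound produced at the $T_1$-stage to dominate \emph{all} of $T_1^{-1}[T_2^{-1}[b_1]]$, not merely $T_1^{-1}$ of a single element. This is why the preliminary interpolation with \eqref{S7} is essential — it gives room so that ``$a_2$ is an upper bound of $T_2^{-1}[b_1]$'' can be upgraded, via $a_2' \S_2 a_2$ and compatibility/monotonicity of $T_1$, to control over the whole preimage. Once the order of quantifiers is arranged correctly, each individual verification reduces to \eqref{S4}, \eqref{S5}, \eqref{S7}, and the definition of relational composition, so the bulk of the work is organizational rather than technical. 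Compatibility of $T_2 \circ T_1$ is immediate since compatible subordinations are closed under composition (this is built into $\SubSfive$ being a category), so I would only remark on it.
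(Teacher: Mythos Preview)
Your plan is essentially correct and, once the details are straightened out, amounts to the dual of the paper's proof: where the paper works directly with the defining condition involving $\inv{T}$ and $S_1[a]$, you work with the equivalent condition \eqref{rem:cont:item1:cond2} of \cref{rem:cont} involving $T^{-1}$ and upper bounds. Both routes have the same architecture: interpolate once via \eqref{S7}, apply continuity of $T_2$ \emph{twice} (to the two adjacent pairs produced by the interpolation) to obtain an $S_2$-related pair in $B_2$, then apply continuity of $T_1$ once and close with compatibility and \eqref{S4}. A couple of corrections and simplifications:

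\textbf{Part (1).} Your computation of $\inv{S}$ slips in the last step: from $\lnot a \S \lnot b$ one obtains $b \S a$, not $a \S b$, so $\inv{S}[b] = S[b]$, not $S^{-1}[b]$. This does not matter for your argument since you switch to \eqref{rem:cont:item1:cond2}, and taking $a = b_1$ there is correct. The paper instead observes $\inv{S} = S$ and notes that the original definition is then immediate (in fact $a = b_2$ works directly).

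\textbf{Part (2).} Your outline is more tangled than needed. A single interpolant suffices: from $c_1 \S_3 c_2$ pick $c$ with $c_1 \S_3 c \S_3 c_2$; apply \eqref{rem:cont:item1:cond2} for $T_2$ to $c_1 \S_3 c$ and to $c \S_3 c_2$ separately, obtaining $d_1 \in T_2^{-1}[c] \cap U(T_2^{-1}[c_1])$ and $d_2 \in T_2^{-1}[c_2] \cap U(T_2^{-1}[c])$. Compatibility of $T_2$ gives $d_1 \S_2 d_1'$ with $d_1' \T_2 c$, so $d_1' \le d_2$ and hence $d_1 \S_2 d_2$. Now apply \eqref{rem:cont:item1:cond2} for $T_1$ to $d_1 \S_2 d_2$. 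The ``obstacle'' you anticipate dissolves via \eqref{S4}: if $a' \T_1 e \T_2 c_1$ then $e \le d_1$, so $a' \T_1 d_1$, hence $a' \le a$. This is exactly the mirror image of the paper's final paragraph, which uses compatibility of $\inv{T}_1$ in the same place.
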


\begin{proof}
\eqref{lem:continuous sub id and comp:item1}. Since $\inv{S}=S$, this is immediate from \eqref{S7}.

\eqref{lem:continuous sub id and comp:item2}.
	It is sufficient to show that $T_2 \circ T_1$ is continuous.
	Let $c_1,c_2 \in B_3$ be such that $c_1 \S_3 c_2$.
	By \eqref{S7}, there is $c \in B_3$ such that $c_1 \S_3 c \S_3 c_2$.
	Therefore, since $T_2$ is continuous, there are $b_1 \in \inv{T}_2[c_1]$ and $b_2 \in \inv{T}_2[c]$ such that $\inv{T}_2[c] \subseteq S_2[b_1]$ and $\inv{T}_2[c_2] \subseteq S_2[b_2]$.
	We have $b_2 \in \inv{T}_2[c] \subseteq S_2[b_1]$, and so $b_1 \S_2 b_2$.
	Thus, since $T_1$ is continuous, there is $a \in \inv{T}_1[b_1]$ such that $\inv{T}_1[b_2] \subseteq S_1[a]$.
	We have $c_1 \mathrel{\inv{T}_2} b_1 \mathrel{\inv{T}_1} a$, and hence $a \in(\inv{T}_1 \circ \inv{T}_2)[c_1]$.
	Since $\inv{T}_1 \circ \inv{T}_2=\inv{T_2 \circ T_1}$, it remains to show that $(\inv{T}_1 \circ \inv{T}_2)[c_2] \subseteq S_1[a]$.
	Let $a' \in (\inv{T}_1 \circ \inv{T}_2)[c_2]$. 
	Then there is $b \in B_2$ such that $c_2 \mathrel{\inv{T}_2} b \mathrel{\inv{T}_1} a'$.
	We have $b \in \inv{T}_2[c_2] \subseteq S_2[b_2]$, and thus $b_2 \S_2 b$.
	From $b_2 \S_2 b \mathrel{\inv{T}_1} a'$ we deduce, using the compatibility of $\inv{T}_1$, that $b_2 \mathrel{\inv{T}_1} a'$.
	Therefore, $a' \in \inv{T}_1[b_2] \subseteq S_1[a]$, 
	and hence $a' \in S_1[a]$, as desired.
\end{proof}

\begin{definition} \label{def:subsfiveC}
Let $\SubSfiveCS$ be the wide subcategory of $\SubSfive$ whose morphisms are continuous compatible subordinations, and define $\devCS$ similarly.
\end{definition}

We next show that \cref{thm:dual-equivalence} restricts to yield the corresponding dual equivalences for $\SubSfiveCS$ and $\devCS$. For this we need the following lemma.

\begin{lemma}\label{lem:continuity SI(T) and B(box)}
Let $(B_1,S_1),(B_2,S_2)$ be $\mathsf{S5}$-subordination algebras and $T \colon B_1 \to B_2$ be a morphism in $\SubSfive$. Let also $L_1,L_2$ be compact regular frames and $\Box \colon L_1 \to L_2$ a preframe homomorphism. 
\begin{enumerate}[label=\normalfont(\arabic*), ref = \arabic*]
\item\label{lem:continuity SI(T) and B(box):item1} If $T \colon B_1 \to B_2$ is continuous, then $\RI(T) \colon \RI(B_2,S_2) \to \RI(B_1,S_1)$ is a $\mathsf{c}$-morphism. 
\item\label{lem:continuity SI(T) and B(box):item2} If $\Box \colon L_1 \to L_2$ is a $\mathsf{c}$-morphism, then $\bool(\Box) \colon \bool(L_2) \to \bool(L_1)$ is  continuous.
\item\label{lem:continuity SI(T) and B(box):item3} If $T \colon B_1\to B_2$ is an isomorphism in $\SubSfive$, then $T$ is an isomorphism in $\SubSfiveCS$.
\item\label{lem:continuity SI(T) and B(box):item4} If $\Box \colon L_1\to L_2$ is an isomorphism in $\KRFrmP$, then $\Box$ is an isomorphism in $\KRFrmC$.
\end{enumerate}
\end{lemma}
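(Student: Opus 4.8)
The plan is to prove the four items by unwinding the definitions of continuity for compatible subordinations (Definition~\ref{def:subsfiveC}) and of a $\c$-morphism (the paragraph before Theorem~\ref{thm:KRFrmC-dual-KHausC}), and then to leverage the already established structural facts about $\RI$, $\bool$, and the well-inside relation. For item~\eqref{lem:continuity SI(T) and B(box):item1}, given a continuous $T \colon (B_1,S_1) \to (B_2,S_2)$, I would construct the witnessing join-preserving $\Diamond \colon \RI(B_2,S_2) \to \RI(B_1,S_1)$ explicitly. The natural candidate is $\Diamond(I) = S_1^{-1}[\, T^{-1}[I] \,]$-style closure, or more precisely the round ideal generated by $\{\, a \in B_1 \mid \text{there is } b \in I \text{ with } a \in U(T^{-1}[\{b\}]) \,\}$ --- i.e. one should dualize the ``$\inv T$'' description of continuity from Lemma~\ref{rem:cont}. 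Concretely, using Lemma~\ref{rem:cont}\eqref{rem:cont:item1:cond2}, continuity says: $b_1 \S_2 b_2$ implies there is $a \in T^{-1}[\{b_2\}] \cap U(T^{-1}[\{b_1\}])$. I would use this to define $\Diamond$ on round ideals by $\Diamond(I) = S_1^{-1}\!\bigl[\bigcup_{b \in I}\{a : a \in U(T^{-1}[\{b\}])\}\bigr]$ (taking the round ideal generated), check it preserves arbitrary joins since joins in $\RI$ are generated unions, and then verify the two inequalities $\Box(I \vee J) \le \Box I \vee \Diamond J$ and $\Box I \wedge \Diamond J \le \Diamond(I \wedge J)$ with $\Box = \RI(T)$, translating each into a membership statement about elements of $B_1$ and using \eqref{S2}, \eqref{S3}, \eqref{S4}, and continuity of $T$.

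For item~\eqref{lem:continuity SI(T) and B(box):item2}, let $\Box \colon L_1 \to L_2$ be a $\c$-morphism with companion $\Diamond$. I must show $\bool(\Box) \colon \bool L_2 \to \bool L_1$, defined by $b \mathrel{\bool(\Box)} a \iff b \prec \Box a$, is continuous in the sense of Definition~\ref{def:subsfiveC}. By Lemma~\ref{rem:cont}\eqref{rem:cont:item1:cond2} applied to $\bool L_2$ as the source of $\inv{\bool(\Box)}$ --- or rather, working directly with $\inv{\bool(\Box)} \colon \bool L_1 \to \bool L_2$ --- I need: given $a_1 \prec a_2$ in $\bool L_1$, produce $b \in \bool L_2$ with $b \in (\bool(\Box)){}^{-1}[\{a_2\}]$ and $b \in U((\bool(\Box)){}^{-1}[\{a_1\}])$. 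Here $(\bool(\Box)){}^{-1}[\{a\}] = \{b \in \bool L_2 : b \prec \Box a\}$. The candidate is $b = (\Diamond a_1)^{**}$ or a regularization thereof: from $a_1 \prec a_2$, i.e. $a_1^* \vee a_2 = 1$ in $L_1$, the $\c$-morphism inequalities give $\Box 1 = 1 \le \Box(a_1^*) \vee \Diamond a_2$ is not quite it --- instead I want to use $\Box a_2 \ge$ something involving $\Diamond a_1$. The correct move: from $a_1 \prec a_2$ pick $c$ with $a_1 \prec c \prec a_2$ (regularity of $L_1$), so $a_1^* \vee c = 1$; then $1 = \Box 1 = \Box(a_1^* \vee c) \le \Box(a_1^*) \vee \Diamond c$, so $\Box(a_1^*) \vee \Diamond c = 1$; combined with $\Box a_1 \wedge \Box(a_1^*) = \Box(a_1 \wedge a_1^*) = \Box 0 = 0$, this forces $\Box a_1 \le \Diamond c$ (since $\Box a_1 = \Box a_1 \wedge 1 = \Box a_1 \wedge(\Box(a_1^*) \vee \Diamond c) = \Box a_1 \wedge \Diamond c \le \Diamond c$), and similarly $\Diamond c \wedge \Box(c^*) \le \Diamond(c \wedge c^*) = 0$ gives $\Diamond c \le \Box(c^*)^* $, hence $\Diamond c \prec$ stuff. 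Taking $b = (\Diamond c)^{**} \in \bool L_2$, one gets $b \le \Box(c^*)^{*}$ so a well-inside relation to $\Box a_2$, yielding $b \prec \Box a_2$; and $\Box a_1 \le \Diamond c \le b$ gives $b \in U(\{b' : b' \prec \Box a_1\})$. I would need to massage these $\prec$-vs-$\le$ steps carefully, inserting interpolants via regularity of $L_2$ where required.

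Items~\eqref{lem:continuity SI(T) and B(box):item3} and~\eqref{lem:continuity SI(T) and B(box):item4} should be quick consequences: an isomorphism $T$ in $\SubSfive$ has an inverse $T^{-1}$, and I claim both $T$ and $T^{-1}$ are automatically continuous. Indeed, an isomorphism in $\SubSfive$ corresponds (under the equivalences of Theorem~\ref{thm:equiv-ABC}) to a homeomorphism of $\mathsf{S5}$-subordination spaces, which gives a homeomorphism of quotient compact Hausdorff spaces, and a homeomorphism (indeed any continuous function) is a continuous relation; by Lemma~\ref{t:correspondence continuous rel and sub} the corresponding subordination is continuous. Alternatively, and more in the choice-free spirit, one checks directly that the inverse of an isomorphism satisfies the continuity condition --- but the slick route is: in any isomorphism $T \colon B_1 \to B_2$ in $\SubSfive$ we have $\inv T = T^{-1}$ composed appropriately, and the continuity condition for $T$ reduces via $T \circ T^{-1} = S_2$ and \eqref{S7} to the continuity of the identity, which holds by Lemma~\ref{lem:continuous sub id and comp}\eqref{lem:continuous sub id and comp:item1}. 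Then item~\eqref{lem:continuity SI(T) and B(box):item4} follows from~\eqref{lem:continuity SI(T) and B(box):item3} by applying $\bool$ (using item~\eqref{lem:continuity SI(T) and B(box):item2} and functoriality), or is proved by the same direct argument.

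\textbf{Main obstacle.} The hard part will be item~\eqref{lem:continuity SI(T) and B(box):item2}: converting the modal-style inequalities $\Box(a \vee b) \le \Box a \vee \Diamond b$ and $\Box a \wedge \Diamond b \le \Diamond(a \wedge b)$ (which live in the frames $L_1, L_2$ and involve $\Diamond$) into the purely order-theoretic/relational continuity condition for $\bool(\Box)$ on the booleanizations. The delicate points are (i) correctly identifying the witness $b \in \bool L_2$ --- it will be a double-pseudocomplement of $\Diamond$ applied to an interpolant --- and (ii) shuttling between $\prec$ and $\le$ using regularity and compactness of $L_1$ and $L_2$ the right number of times, since $\bool(\Box)$ is defined via $\prec$, not $\le$. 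I expect item~\eqref{lem:continuity SI(T) and B(box):item1} to be comparatively routine once the companion $\Diamond$ on round ideals is written down, and items~\eqref{lem:continuity SI(T) and B(box):item3}--\eqref{lem:continuity SI(T) and B(box):item4} to be short.
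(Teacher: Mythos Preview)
Your plan for items~\eqref{lem:continuity SI(T) and B(box):item3} and~\eqref{lem:continuity SI(T) and B(box):item4} is fine and matches the paper's treatment; in fact the paper simply observes that isomorphisms in $\SubSfive$ are automatically functional (hence continuous) and that isomorphisms in $\KRFrmP$ are order-isomorphisms, so $\Diamond \coloneqq \Box$ witnesses the $\c$-morphism condition.

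There are, however, two concrete problems in your approach to items~\eqref{lem:continuity SI(T) and B(box):item1} and~\eqref{lem:continuity SI(T) and B(box):item2}.

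\textbf{Item~\eqref{lem:continuity SI(T) and B(box):item1}: wrong companion $\Diamond$.} Your proposed $\Diamond(I)$ is built from the sets $U(T^{-1}[\{b\}])$ for $b \in I$. But $0 \in I$ and $0 \in T^{-1}[\{0\}]$, so $U(T^{-1}[\{0\}])$ is a filter containing $1$; the ideal generated by any such set is all of $B_1$. Thus your $\Diamond$ is constantly the top round ideal, which cannot satisfy $\Box I \wedge \Diamond J \le \Diamond(I \wedge J)$. The paper instead uses the characterization \cref{rem:cont}\eqref{rem:cont:item1:cond1} and sets
\[
\Diamond I \;=\; \{\,a \in B_1 \mid \exists\, b \in I,\ a \in L(\inv{T}[b])\,\},
\]
i.e.\ \emph{lower} bounds of $\inv{T}$-images rather than upper bounds of $T^{-1}$-images. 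This is genuinely an ideal, and the two $\c$-morphism inequalities then follow by careful but elementary manipulations with \eqref{S2}--\eqref{S6}. (Your instinct that this part is ``comparatively routine'' once the right $\Diamond$ is written down is correct, but it is the longer of the two verifications in the paper.)

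\textbf{Item~\eqref{lem:continuity SI(T) and B(box):item2}: the step $\Box 0 = 0$ fails.} A $\c$-morphism is only a preframe homomorphism, so it need not preserve the empty join; dually, a continuous relation between compact Hausdorff spaces need not be total. Hence your deduction $\Box a_1 \wedge \Box(a_1^*) = \Box 0 = 0$ is invalid, and with it the conclusion $\Box a_1 \le \Diamond c$. The argument can be repaired by using $\Diamond 0 = 0$ instead (which \emph{does} hold, since $\Diamond$ preserves joins): from $a_1 \prec c \prec a_2$ one gets $\Box a_1 \wedge \Diamond c^* \le \Diamond(a_1 \wedge c^*) = \Diamond 0 = 0$, whence $(\Diamond c^*)^{**} \le (\Box a_1)^*$; combined with $\Box a_2 \vee \Diamond c^* = 1$ (from $c \prec a_2$) this yields $\Box a_1 \prec \Box a_2$. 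The paper packages this as the single fact that $\c$-morphisms preserve $\prec$ (cited to \cite[Lem.~3.6]{BBH15}) and then takes the very clean witness $a = \neg\Box\neg b_2 \in \bool L_2$, never touching $\Diamond$ again. So your diagnosis that item~\eqref{lem:continuity SI(T) and B(box):item2} is the hard part is inverted relative to the paper: once $\prec$-preservation is in hand, item~\eqref{lem:continuity SI(T) and B(box):item2} is a few lines, while item~\eqref{lem:continuity SI(T) and B(box):item1} is the substantive calculation.
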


\begin{proof}
\eqref{lem:continuity SI(T) and B(box):item1}. Let $\Box=\RI(T)$. Then $\Box$ is a preframe homomorphism by \cref{thm: RI contravariant}. We define $\Diamond \colon \RI(B_2,S_2) \to \RI(B_1,S_1)$ by 
\[
\Diamond I = \{a \in B_1 \mid \exists b \in I : a \in L(\inv{T}[b]) \}.
\] 
We first show that $\Diamond$ is well defined. It is straightforward to see that $\Diamond I$ is an ideal of $B_1$. To see that $\Diamond I$ is a round ideal, let $a\in\Diamond I$. Then there is $b\in I$ with $a\in L(\inv{T}[b])$. Since $I$ is a round ideal, there is $d\in I$ with $b \S_2 d$. Because $T$ is continuous, there is $c\in \inv{T}[b]$ such that $c \in L(\inv{T}[d])$ (see \cref{rem:cont}\eqref{rem:cont:item1:cond1}). Therefore, $c \in \Diamond I$ since $d \in I$. Because $\inv{T}$ is compatible, from $b \mathrel{\inv{T}} c$ it follows that there is $c' \in \inv{T}[b]$ with $c' \S_1 c$. But then $a \le c'$ since $a \in L(\inv{T}[b])$. Thus, $a \le c' \S_1 c$, so $a \S_1 c$, and hence $\Diamond I$ is a round ideal.

We next show that $\Diamond$ preserves arbitrary joins. It is straightforward to see that $I \subseteq J$ implies $\Diamond I \subseteq \Diamond J$. Therefore, if $\{ I_\alpha \} \subseteq \RI(B_2,S_2)$, then $\bigvee \Diamond I_\alpha \subseteq \Diamond \left( \bigvee I_\alpha \right)$. For the reverse inclusion, let $x \in \Diamond \left( \bigvee I_\alpha \right)$. Then there is $b \in \bigvee I_\alpha$ with $x \in L(\inv{T}[b])$. Since $b \in \bigvee I_\alpha$, there exist $\alpha_1, \ldots, \alpha_n$ and $d_i \in I_{\alpha_i}$ for $i=1, \ldots, n$ such that $b \le d_1 \vee \dots \vee d_n$. Thus, $x \in L(\inv{T}[d_1 \vee \cdots \vee d_n])$. Because $I_{\alpha_i}$ is a round ideal for each $i$, it follows that there exist $e_i \in I_{\alpha_i}$ with $d_i \S_2 e_i$ for each $i$. By continuity of $T$, there exist $a_i \in \inv{T}[d_i]$ with $a_i \in L(\inv{T}[e_i])$ for each $i$. So $a_i \in \Diamond I_{\alpha_i}$ for each $i$ and $a_1 \vee \cdots \vee a_n \in \inv{T}[d_1 \vee \cdots \vee d_n]$. Since $x \in L(\inv{T}[d_1 \vee \cdots \vee d_n])$, it follows that $x \le a_1 \vee \cdots \vee a_n$. Consequently, $x \in \bigvee \Diamond I_\alpha$.

It is left to prove that $\Box I \cap \Diamond J \subseteq \Diamond(I \cap J)$ and $\Box(I \vee J) \subseteq \Box I \vee \Diamond J$ for all $I,J \in \RI(B_2,S_2)$. Let $x \in \Box I \cap \Diamond J$. Since $x \in \Box I = T^{-1}[I]$, there is $a \in I$ with $x \T a$. Because $x \in \Diamond J$, there is $b \in J$ with $x \in L(\inv{T}[b])$. We first show that $x \in L(\inv{T}[a \wedge b])$. If $e \in \inv{T}[a \wedge b]$, then $\neg e \T (\neg a \vee \neg b)$. Since $x \T a$, it follows that $(x \wedge \neg e) \T (a \wedge (\neg a \vee \neg b))$. So $(x \wedge \neg e) \T (a \wedge \neg b)$, and hence $(x \wedge \neg e) \T \neg b$. Therefore, $\neg x \vee e \in \inv{T}[b]$. Because $x \in L(\inv{T}[b])$, we have $x \le \neg x \vee e$, and so $x \le e$. Thus, $x \in L(\inv{T}[a \wedge b])$. Since $a \wedge b \in I \cap J$, we conclude that $x \in \Diamond(I \cap J)$.

Finally, 
let $x \in \Box(I \vee J)=T^{-1}[I \vee J]$. Then there is $y \in I \vee J$ with $x \T y$. Thus, there exist $a \in I$, $b \in J$ with $y \le a \vee b$. Since $I$ and $J$ are round ideals, there exist $a' \in I$, $b' \in J$ with $a \S_2 a'$ and $b \S_2 b'$. Because $\neg a' \S_2 \neg a$ and $b \S_2 b'$, the continuity of $T$ yields that there exist $c \in \inv{T}[\neg a']$ and $d \in \inv{T}[b]$ with $c \in L(\inv{T}[\neg a])$ and $d \in L(\inv{T}[b'])$. From $c \in \inv{T}[\neg a']$ 
it follows that $\neg c \T a'$, so $\neg c \in T^{-1}[I] = \Box I$. Since $d \in L(\inv{T}[b'])$ and $b' \in J$, we have $d \in \Diamond J$. Therefore, $\neg c \vee d \in \Box I \vee \Diamond J$. We prove that $x \le \neg c \vee d$, which is equivalent to $c \le \neg x \vee d$. We have $x \T (a \vee b)$ and $\neg d \T \neg b$ because $d \in \inv{T}[b]$. Therefore, $(x \wedge \neg d) \T ((a \vee b) \wedge \neg b)$, and so $(x \wedge \neg d) \T (a \wedge \neg b) \le a$. Thus, 
$\neg x \vee d \in \inv{T}[\neg a]$. Since $c \in L(\inv{T}[\neg a])$, we obtain $c \le \neg x \vee d$. Consequently, $x \in \Box I \vee \Diamond J$ because $x \le \neg c \vee d \in \Box I \vee \Diamond J$.

\eqref{lem:continuity SI(T) and B(box):item2}. Let $T = \bool(\Box)$. By \cref{lem:boolf compatible}, $T \colon \bool({L_2})\to\bool({L_1})$ is a morphism in $\SubSfive$. To see that it is continuous, let $b_1,b_2 \in \bool({L_1})$ with $b_1 \prec b_2$. Set $a = \neg \Box \neg b_2$. Then $a\in\bool(L_2)$. We show that $b_1 \mathrel{\inv{T}} a$ and $a \in L(\inv{T}[b_2])$. We have $\neg b_2 \prec \neg b_1$, so
$\Box \neg b_2 \prec \Box \neg b_1$ since $\Box$ preserves $\prec$ (see \cite[Lem.~3.6]{BBH15}). The definition of $\prec$ implies $\neg \neg \Box \neg b_2 \prec \Box \neg b_1$. Therefore, $\neg a \prec \Box \neg b_1$, which gives $\neg a \T  \neg b_1$. Thus, $b_1 \mathrel{\inv{T}} a$. If $x \in \inv{T}[b_2]$, then $\neg x \T \neg b_2$, so $\neg x \prec \Box \neg b_2$. Therefore, $a=\neg \Box \neg b_2 \prec x$, and hence $a \le x$. Thus, $a \in L(\inv{T}[b_2])$, and so $T$ is continuous.

\eqref{lem:continuity SI(T) and B(box):item3}. 
This is a consequence of a stronger result proved in \cref{lem:functionality SI(T) and B(box)}\eqref{lem:functionality SI(T) and B(box):item3} below.

\eqref{lem:continuity SI(T) and B(box):item4}. Since $\Box$ is an isomorphism in $\KRFrmP$, it is a poset isomorphism. Defining $\Diamond \coloneqq \Box$ then yields that $\Box$ is an isomorphism in $\KRFrmC$.
\end{proof}

As an immediate consequence of \cref{thm:dual-equivalence,lem:continuity SI(T) and B(box)} we obtain:

\begin{theorem} \label{thm: continuous case}
\mbox{}\begin{enumerate}[label=\normalfont(\arabic*), ref = \arabic*]
\item The dual equivalence between $\SubSfive$ and $\KRFrmP$ restricts to a dual equivalence between their wide subcategories $\SubSfiveCS$ and $\KRFrmC$.
\item The dual equivalence between $\devS$ and $\KRFrmP$ restricts to a dual equivalence between their wide subcategories $\devCS$ and $\KRFrmC$.
\end{enumerate}
\end{theorem}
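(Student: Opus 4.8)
The plan is that \cref{thm: continuous case} is a purely formal restriction of the dual equivalences of \cref{thm:dual-equivalence}, with all the substantive work already done in \cref{lem:continuity SI(T) and B(box)}. Recall that $\SubSfiveCS$ is a wide subcategory of $\SubSfive$ (see \cref{def:subsfiveC}, with \cref{lem:continuous sub id and comp} guaranteeing it is a category), that $\devCS$ is a wide subcategory of $\devS$, and that $\KRFrmC$ is a wide subcategory of $\KRFrmP$. To restrict a dual equivalence given by contravariant functors $F,G$ with witnessing natural isomorphisms $\eta\colon 1\to GF$ and $\varepsilon\colon 1\to FG$ to wide subcategories, it suffices to check that $F$ and $G$ send morphisms of the subcategories into the subcategories and that every component of $\eta$ and of $\varepsilon$ is an isomorphism of the relevant subcategory; the naturality squares then commute automatically in the subcategories, since they already commute in the ambient categories with all four arrows lying in the subcategory.

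First I would record that the functors restrict. By \cref{lem:continuity SI(T) and B(box)}\eqref{lem:continuity SI(T) and B(box):item1}, $\RI$ sends a continuous compatible subordination to a $\c$-morphism, so it restricts to contravariant functors $\SubSfiveCS\to\KRFrmC$ and $\devCS\to\KRFrmC$. By \cref{lem:continuity SI(T) and B(box)}\eqref{lem:continuity SI(T) and B(box):item2}, $\bool$ sends a $\c$-morphism to a continuous compatible subordination between the corresponding de Vries algebras; since $\devCS$ is by definition the wide subcategory of $\devS$ on the continuous compatible subordinations, $\bool$ restricts to $\KRFrmC\to\devCS$, whence $\F\circ\bool$ restricts to $\KRFrmC\to\SubSfiveCS$ with $\F$ restricting to the inclusion $\devCS\hookrightarrow\SubSfiveCS$.

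Next I would check that the natural isomorphisms restrict. The components of $Q\colon 1_{\SubSfive}\to\F\circ\bool\circ\RI$ are the isomorphisms $Q_{\mathbf B}$ of \cref{thm:iso}, each of which is an isomorphism in $\SubSfiveCS$ by \cref{lem:continuity SI(T) and B(box)}\eqref{lem:continuity SI(T) and B(box):item3}; the components of $f\colon 1_{\KRFrmP}\to\RI\circ\F\circ\bool$ are the frame isomorphisms $f_L$, each of which is an isomorphism in $\KRFrmC$ by \cref{lem:continuity SI(T) and B(box)}\eqref{lem:continuity SI(T) and B(box):item4} (obtained by taking $\Diamond=\Box$, which is legitimate because $f_L$ is a poset isomorphism). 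Hence the restricted functors $\RI$ and $\F\circ\bool$, together with the restricted $Q$ and $f$, form the dual equivalence of part~(1). Part~(2) is word for word the same, with $\devS$ and $\devCS$ in place of $\SubSfive$ and $\SubSfiveCS$, using the functors $\RI\circ\F$ and $\bool$ and the witnessing natural isomorphisms $Q'\colon 1_{\devS}\to\bool\circ\RI\circ\F$ (whose component at $\mathbf B$ is again $Q_{\mathbf B}$) and $f$.

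I do not expect a genuine obstacle here: once \cref{lem:continuity SI(T) and B(box)} is in hand the argument is pure bookkeeping. The only point that is not completely automatic is the distinction between an invertible morphism of the ambient category and an isomorphism of the subcategory for the components of $Q$ and $f$ — precisely the gap that parts \eqref{lem:continuity SI(T) and B(box):item3} and \eqref{lem:continuity SI(T) and B(box):item4} of the lemma were set up to close — together with the mild sanity check that $\bool$ applied to a $\c$-morphism between de Vries algebras lands in $\devCS$ and not merely in $\devS$, which is immediate from the definition of $\devCS$.
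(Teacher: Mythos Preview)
Your proposal is correct and is precisely the paper's approach: the paper states the theorem as ``an immediate consequence of \cref{thm:dual-equivalence,lem:continuity SI(T) and B(box)}'' with no further proof, and you have simply unpacked that immediate consequence in full detail, correctly invoking each part of \cref{lem:continuity SI(T) and B(box)} for its intended purpose.
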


We conclude this section by showing that $\devCS$ is dually isomorphic to $\devC$.
Let $(B_1,S_1)$ and $(B_2,S_2)$ be de Vries algebras. If $T \colon B_1 \to B_2$ is a morphism in $\devCS$, we define $\Box_T \colon B_2 \to B_1$ by $\Box_T b = \bigvee T^{-1}[b]$. Also, if $\Box \colon B_2 \to B_1$ is a morphism in $\devC$, we define $T_\Box \colon B_1 \to B_2$ by
\[
a \T_\Box b \iff \exists b'\in B_2 \ (a \S_1 \Box b'\mbox{ and }b' \S_2 b).
\]

\begin{lemma}\label{lem:continuity BoxT and TBox}
Let $(B_1,S_1)$ and $(B_2,S_2)$ be de Vries algebras.
\begin{enumerate}[label=\normalfont(\arabic*), ref = \arabic*]
\item\label{lem:continuity BoxT and TBox:item1} If $T \colon B_1 \to B_2$ is a morphism in $\devCS$, then $\Box_T \colon B_2 \to B_1$ is a morphism in $\devC$.
\item\label{lem:continuity BoxT and TBox:item2} If $\Box \colon B_2 \to B_1$ is a morphism in $\devC$, then $T_\Box \colon B_1 \to B_2$ is a morphism in $\devCS$.
\item\label{lem:continuity BoxT and TBox:item3} $\Box_{T_\Box}=\Box$.
\item\label{lem:continuity BoxT and TBox:item4} $T_{\Box_T}=T$.
\end{enumerate}
\end{lemma}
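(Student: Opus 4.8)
The plan is to verify the four items in order, treating items (1) and (2) as the substantive work and items (3) and (4) as bookkeeping computations that exploit the roundness/density built into de Vries algebras. For item~(1), I would start from a morphism $T\colon B_1\to B_2$ in $\devCS$, i.e.\ a continuous compatible subordination, and check that $\Box_T b=\bigvee T^{-1}[b]$ satisfies $\Box_T 1=1$, is de Vries multiplicative, and is lower continuous. The identity $\Box_T 1=1$ is immediate from $a\mathrel T 1$ for all $a\in B_1$ (so $T^{-1}[1]=B_1$). For de Vries multiplicativity I would take $b_1\S_2 b_1'$ and $b_2\S_2 b_2'$ and must show $(\Box_T b_1\wedge\Box_T b_2)\S_1\Box_T(b_1'\wedge b_2')$; here the key move is to use \eqref{rem:cont:item2:cond2} of \cref{rem:cont} (valid since $B_1$ is complete), which says $(\bigvee T^{-1}[b_i])\mathrel T b_i'$, i.e.\ $\Box_T b_i\mathrel T b_i'$, and then combine via \eqref{S3} (to get $a\mathrel T (b_1'\wedge b_2')$ for suitable $a\ge\Box_T b_1\wedge\Box_T b_2$) and \eqref{S4}, together with the fact that $\Box_T(b_1'\wedge b_2')$ is the join of $T^{-1}[b_1'\wedge b_2']$, so $\Box_T b_1\wedge\Box_T b_2\le\Box_T(b_1'\wedge b_2')$ and an $\S_1$-witness can be extracted using \eqref{S7}. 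Lower continuity, $\Box_T a=\bigvee\{\Box_T b\mid b\S_1 a\}$, would follow by unwinding definitions: the right side is $\bigvee\{\bigvee T^{-1}[b]\mid b\S_1 a\}=\bigvee\bigcup\{T^{-1}[b]\mid b\S_1 a\}=\bigvee T^{-1}[S_1^{-1}[a]]=\bigvee(T\circ S_1)^{-1}[a]=\bigvee T^{-1}[a]=\Box_T a$, using compatibility $T\circ S_1=T$.

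For item~(2), given a $\devC$-morphism $\Box\colon B_2\to B_1$ I must show $T_\Box\colon B_1\to B_2$, defined by $a\mathrel{T_\Box}b\iff\exists b'\ (a\S_1\Box b'\text{ and }b'\S_2 b)$, is a continuous compatible subordination. Axioms \eqref{S1}--\eqref{S4} are routine: \eqref{S4} follows because $\S_1$ and $\S_2$ absorb order on the appropriate sides; \eqref{S2} and \eqref{S3} use that $\Box$ is multiplicative (for \eqref{S3}, from $a\S_1\Box b'$, $b'\S_2 c$ and $a\S_1\Box c'$, $c'\S_2 d$ one builds $a\S_1(\Box b'\wedge\Box c')\S_1\Box(b'\wedge c')$ via multiplicativity, then interpolates $b'\wedge c'$ between via \eqref{S7} to land in the defining form); for \eqref{S2} one similarly uses that $\Box b'\wedge\Box c'\ge$ a common lower bound and de Vries additivity on the $\S_2$ side; \eqref{S1} uses $\Box 1=1$. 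Compatibility $T_\Box\circ S_1=T_\Box=S_2\circ T_\Box$ follows by interpolation with \eqref{S7} on both sides (an extra $\S_1$-step can be absorbed by the $\S_1\Box b'$ clause, and an extra $\S_2$-step by the $b'\S_2 b$ clause). Continuity of $T_\Box$ I would verify via \cref{rem:cont}\eqref{rem:cont:item2:cond2}: compute $\bigvee T_\Box^{-1}[b_1]$ and show it equals (or is $\le$, with a witness) an element standing in $T_\Box$-relation to $b_2$ when $b_1\S_2 b_2$; lower continuity of $\Box$ is exactly what makes $\bigvee T_\Box^{-1}[b_1]=\bigvee\{\Box b'\mid b'\S_2 b_1\}=\Box b_1$ (the last step by lower continuity), after which $\Box b_1\S_1\Box b_2$ (since $\Box$ preserves $\prec$ by \cite[Lem.~3.6]{BBH15}) and an interpolation gives $\Box b_1\mathrel{T_\Box}b_2$.

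Items~(3) and~(4) are the verification that these two constructions are mutually inverse. For (3), $\Box_{T_\Box}b=\bigvee T_\Box^{-1}[b]=\bigvee\{a\mid\exists b'\ (a\S_1\Box b',\ b'\S_2 b)\}=\bigvee\{\Box b'\mid b'\S_2 b\}$ — the last equality because for each fixed $b'\S_2 b$ the set of such $a$ has supremum $\Box b'$ (since $B_1$ is a de Vries algebra, every element is the join of what is $\S_1$-below it), and then $\bigvee\{\Box b'\mid b'\S_2 b\}=\Box b$ by lower continuity of $\Box$. For (4), I must show $a\mathrel T b$ iff $\exists b'\ (a\S_1\Box_T b'\text{ and }b'\S_2 b)$. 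The forward direction: from $a\mathrel T b$, compatibility and \eqref{S7} give $b'$ with $a\mathrel T b'\S_2 b$; then $a\le\bigvee T^{-1}[b']=\Box_T b'$, and interpolating once more with \eqref{S7} on the $\S_1$ side gives $a\S_1\Box_T b'$ (this needs one more application of compatibility/roundness to replace $\le$ by $\S_1$). The reverse direction: from $a\S_1\Box_T b'=\bigvee T^{-1}[b']$ and $b'\S_2 b$, use \cref{rem:cont}\eqref{rem:cont:item2:cond2} (continuity of $T$) to get $\Box_T b'\mathrel T b$, hence $a\S_1\Box_T b'\mathrel T b\S_2 b$, and \eqref{S4} (with \eqref{S5} turning $\S$ into $\le$ as needed) yields $a\mathrel T b$.

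I expect item~(2), specifically verifying that $T_\Box$ is continuous and that it is genuinely compatible, to be the main obstacle: compatibility requires juggling two separate interpolation steps (one through $\Box$'s domain side, one through its codomain side) and getting the roundness conditions \eqref{S7} to land the composite back into the two-clause existential form, while continuity requires knowing $\bigvee T_\Box^{-1}[b_1]=\Box b_1$, which is precisely where lower continuity of $\Box$ is essential and must be invoked carefully. Everything else reduces to routine manipulation of the subordination axioms \eqref{S1}--\eqref{S7} together with the density property of de Vries algebras (every element is the join of the elements $\S$-below it) and \cref{rem:cont}.
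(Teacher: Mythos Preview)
Your plan is correct and follows essentially the same route as the paper's proof: item~(1) via \cref{rem:cont}\eqref{rem:cont:item2:cond2} plus compatibility to extract the $S_1$-witness, item~(2) by checking \eqref{S1}--\eqref{S4} and then continuity via the identity $\bigvee T_\Box^{-1}[b_1]=\Box b_1$, and items~(3)--(4) by unwinding definitions. Two small points to clean up. First, in your lower-continuity argument for item~(1) the subscripts are off: since $\Box_T\colon B_2\to B_1$, lower continuity reads $\Box_T a=\bigvee\{\Box_T b\mid b\mathrel{S_2} a\}$ with $a,b\in B_2$, and the compatibility you need is $S_2\circ T=T$ (so $T^{-1}[S_2^{-1}[a]]=T^{-1}[a]$), not $T\circ S_1=T$. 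Second, in item~(4) your forward direction can be streamlined exactly as the paper does: from $a\mathrel T b$ use compatibility on \emph{both} sides at once to get $a\mathrel{S_1} a'\mathrel T b'\mathrel{S_2} b$; then $a'\le\Box_T b'$ and \eqref{S4} give $a\mathrel{S_1}\Box_T b'$ directly, without the extra interpolation step you flag.
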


\begin{proof}
\eqref{lem:continuity BoxT and TBox:item1}. We first show that $\Box_T$ is de Vries multiplicative. It is obvious that $\Box_T 1=1$. Let $b_1 \S_2 b_2$ and $d_1 \S_2 d_2$. Since $T$ is continuous and $B_1$ is complete, by \cref{rem:cont}\eqref{rem:cont:item2:cond2}
\[
\left(\bigvee\T^{-1}[b_1]\right) \T b_2 \quad \mbox{and} \quad \left(\bigvee\T^{-1}[d_1]\right) \T d_2.
\] 
Therefore, $(\Box_T b_1 \wedge \Box_T d_1) \T (b_2 \wedge d_2)$. Since $T$ is compatible, there is $x \in B_1$ such that 
\[
(\Box_T b_1 \wedge \Box_T d_1) \S_1 x \T (b_2 \wedge d_2).
\]
Thus, $(\Box_T b_1 \wedge \Box_T d_1) \S_1 x \le \Box_T (b_2 \wedge d_2)$, and hence $(\Box_T b_1 \wedge \Box_T d_1) \S_1 \Box_T (b_2 \wedge d_2)$. Consequently, $\Box_T$ is de Vries multiplicative. To see that $\Box_T$ is lower continuous, let $x \in T^{-1}[b]$. Since $T$ is compatible, $x \T y \S_2 b$ for some $y \in B_2$. Therefore, $x \le \Box_T y$, and hence $\Box_T b = \bigvee \{ \Box_T y \mid y \S_2 b \}$. Thus, $\Box_T$ is a morphism in $\devC$.

\eqref{lem:continuity BoxT and TBox:item2}. That $0 \T_\Box 0$ is straightforward and that $1 \T_\Box 1$ follows from $\Box 1 = 1$. Since $\Box$ is lower continuous, it is order preserving (see \cite[Prop.~4.15(2)]{BBH15} and \cref{rem:box diamond upper lower}\eqref{rem:box diamond upper lower:item2}). 
Suppose $a,a' \T_\Box b$. Then there exist $b_1$ and $b_2$ such that $a \S_1 \Box b_1$, $b_1 \S_2 b$, $a' \S_1 \Box b_2$, and $b_2 \S_2 b$. From $a \S_1 \Box b_1$ and $a' \S_1 \Box b_2$ it follows that $(a \lor a') \S_1 (\Box b_1 \lor \Box b_2) \leq \Box(b_1 \lor b_2)$, and so $(a \lor a') \S_1 \Box(b_1 \lor b_2)$. Also, from $b_1 \S_2 b$ and $b_2 \S_2 b$ it follows that $(b_1 \lor b_2) \S_2 b$. Thus, $(a\vee a') \T_\Box b$. 
Next suppose $a \T_\Box b,b'$. Then there exist $b_1$ and $b_2$ such that $a \S_1 \Box b_1$, $b_1 \S_2 b$, $a \S_1 \Box b_2$, and $b_2 \S_2 b'$. From $a \S_1 \Box b_1$ and $a \S_1 \Box b_2$ it follows that $a \S_1 (\Box b_1 \land \Box b_2) = \Box (b_1 \land b_2)$ (see \cite[Prop.~4.15(2)]{BBH15} and \cref{rem:box diamond upper lower}\eqref{rem:box diamond upper lower:item2}). Also, from $b_1 \S_2 b$ and $b_2 \S_2 b'$ it follows that $(b_1 \land b_2) \S_2 (b \land b')$. Thus, $a \T_\Box (b \wedge b')$.
Finally, that $a \le a' \T_\Box b' \le b$ implies $a \T_\Box b$ is straightforward. This gives that $T_\Box$ is a subordination.

That $T_\Box \subseteq S_2 \circ T_\Box$ and $T_\Box \subseteq T_\Box \circ S_1$ follow from the fact that $S_2$ and $S_1$ satisfy \eqref{S7}. 
The reverse inclusions are obvious, so $S_2 \circ T_\Box = T_\Box = T_\Box \circ S_1$. This yields that $T_\Box$ is a compatible subordination.

It is left to prove that $T_\Box$ is continuous. Let $b_1 \S_2 b_2$. Then there is $y \in B_2$ with $b_1 \S_2 y \S_2 b_2$. Set $a=\Box b_1$. Since $a \S_1 \Box y$ and $y \S_2 b_2$, we have $a \T_\Box b_2$, so $a \in T_\Box^{-1}[b_2]$. Moreover, if $x \T_\Box b_1$, then there is $z \in B_2$ such that $x \S_1 \Box z$ and $z \S_2 b_1$. Therefore, $x \S_1 \Box b_1$, and so $x \S_1 a$. Thus, $a \in U(T_\Box^{-1}[b_1])$ by \eqref{S5}, and hence $T_\Box$ is continuous by \cref{rem:cont}\eqref{rem:cont:item1:cond2}. Consequently, $T_\Box$ is a morphism in $\devCS$.

\eqref{lem:continuity BoxT and TBox:item3}. We have
\[
\Box_{T_\Box} b = \bigvee T_\Box^{-1}[b] = \bigvee \{ a \mid \exists b'\in B_2 \ (a \S_1 \Box b' \mbox{ and } b' \S_2 b)\} = \bigvee \{ \Box b' \mid b' \S_2 b \} = \Box b,
\]
where the second to last equality follows from the facts that $S_2$ satisfies \eqref{S7}
and $b' \S_2 b$ implies $\Box b' \S_1 \Box b$, and the last equality from the lower continuity of $\Box$.

\eqref{lem:continuity BoxT and TBox:item4}. We have
\begin{align*}
a \mathrel{T_{\Box_T}} b & \iff \exists b'\in B_2 \ \left(a \S_1 \Box_T b' \mbox{ and } b' \S_2 b\right)\\
& \iff \exists b'\in B_2 \ \left(a \S_1 \bigvee T^{-1}[b'] \mbox{ and } b' \S_2 b\right).
\end{align*}
We show that the last condition is equivalent to $a \T b$.
Since $T$ is a morphism in $\devCS$ and $b' \S_2 b$, we have $\left(\bigvee T^{-1}[b'] \right) \T b$ by \cref{rem:cont}\eqref{rem:cont:item2:cond2}. Therefore, $a \S_1 \left(\bigvee T^{-1}[b'] \right) \T b$, and so $a \T b$.
Conversely, if $a \T b$, there are $a' \in B_1$ and $b' \in B_2$ such that $a \S_1 a' \T b' \S_2 b$. Thus, $a' \le \bigvee T^{-1}[b']$, and hence $a \S_1 \bigvee T^{-1}[b']$.
\end{proof}

As an immediate consequence of \cref{lem:continuity BoxT and TBox} we obtain:

\begin{theorem} \label{thm: continuous DeV}
$\devCS$ is dually isomorphic to $\devC$. 
\end{theorem}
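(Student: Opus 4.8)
The plan is to promote the assignments $T\mapsto\Box_T$ and $\Box\mapsto T_\Box$ introduced before \cref{lem:continuity BoxT and TBox} to a pair of mutually inverse contravariant functors witnessing the dual isomorphism. On objects both $\devCS$ and $\devC$ are the class of de Vries algebras, so I take the identity assignment on objects, which is a bijection. On morphisms I send $T\colon B_1\to B_2$ in $\devCS$ to $\Box_T\colon B_2\to B_1$ in $\devC$; this reverses direction, so the functor will be contravariant. Well-definedness of this assignment is \cref{lem:continuity BoxT and TBox}\eqref{lem:continuity BoxT and TBox:item1}, and the fact that $T\mapsto\Box_T$ restricts to a bijection on each hom-set, with inverse $\Box\mapsto T_\Box$, follows from \cref{lem:continuity BoxT and TBox}\eqref{lem:continuity BoxT and TBox:item2}--\eqref{lem:continuity BoxT and TBox:item4}.

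What remains is functoriality, say of $T\mapsto\Box_T$. For identities: the identity morphism of a de Vries algebra $(B,S)$ in $\devCS$ is the relation $S$, and since each $b\in B$ is the supremum of $S^{-1}[b]$ (because $(B,S)$ is a compingent algebra; see \cite[Thm.~1.1.4]{deV62}), we get $\Box_S b=\bigvee S^{-1}[b]=b$, i.e.\ $\Box_S=\id_B$, which is the identity morphism of $(B,S)$ in $\devC$. For composition, given $T_1\colon B_1\to B_2$ and $T_2\colon B_2\to B_3$ in $\devCS$, I would prove $\Box_{T_2\circ T_1}=\Box_{T_1}*\Box_{T_2}$. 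Starting from the identity $(T_2\circ T_1)^{-1}[b]=T_1^{-1}[T_2^{-1}[b]]$, the left-hand side applied to $b\in B_3$ is $\bigvee T_1^{-1}[T_2^{-1}[b]]$, while the right-hand side is $\bigvee\{\Box_{T_1}\Box_{T_2}c\mid c\S_3 b\}$; to match them one uses that $T_1$ and $T_2$ are continuous in the form of \cref{rem:cont}\eqref{rem:cont:item2:cond2}, the transitivity axiom \eqref{S7} to insert intermediate elements, and the monotonicity and join-compatibility properties of subordinations. Once $T\mapsto\Box_T$ is known to be a bijective contravariant functor, its hom-set inverse $\Box\mapsto T_\Box$ is automatically a contravariant functor as well, so the two are inverse isomorphisms of categories and $\devCS$ is dually isomorphic to $\devC$.

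The genuinely routine steps — well-definedness of the two assignments on morphisms and the fact that they are mutually inverse — are already isolated in \cref{lem:continuity BoxT and TBox}, so the only substantive verification left is compatibility with composition. This is the step I expect to be the main obstacle: in contrast to the identity case it really requires continuity and careful manipulation of infinite joins against $\S$, and one must check that the two descriptions of the composite genuinely agree rather than merely one being contained in the other. In practice it may be smoother to instead verify the dual identity $T_{\Box_1*\Box_2}=T_{\Box_2}\circ T_{\Box_1}$ using the pointwise definition of $T_\Box$, whichever unwinds more cleanly; either one suffices given the bijection on hom-sets already in hand.
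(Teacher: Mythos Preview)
Your approach is correct and is essentially the paper's: the paper states the theorem as an ``immediate consequence'' of \cref{lem:continuity BoxT and TBox} without writing out the functoriality checks you outline, so you are simply making explicit what the paper leaves implicit. Your verification of identities via $\Box_S b=\bigvee S^{-1}[b]=b$ is exactly the right observation, and your plan for composition (using continuity in the form of \cref{rem:cont}\eqref{rem:cont:item2:cond2} together with \eqref{S7}) is sound; the paper evidently regards these as routine enough not to spell out.
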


Putting \cref{thm: continuous case,thm: continuous DeV} together yields the following analogue of the commutative diagram of equivalences and dual equivalences given at the end of \cref{sec:MacNeille}.
\[
\begin{tikzcd}[column sep=6ex, row sep=6ex]
& \SubSfiveCS \arrow[dl, "\RI"'] \arrow[dr, shift left=1ex, "\NI"] &  &\\
\KRFrmC \arrow[rr, "\bool"] & & \devCS \arrow[ul, hookrightarrow, shift left=0.5ex, "\F"] \arrow[r, leftrightarrow, "d"] & \devC
\end{tikzcd}
\]

\begin{remark}
As we pointed out in \cref{sec: prelims}, $\KRFrmC$ and $\devC$ are dually equivalent to $\KHausC$. Hence, $\SubSfiveCS$ and $\devCS$ are equivalent to $\KHausC$. 
The wide subcategories of $\StoneER$ and $\GleR$ that are equivalent to $\KHausC$ can be described as follows.

\label{def-StoneEC} \label{def-StoneGleC} Let $(X,E)$ be an $\mathsf{S5}$-subordination space. A morphism $R\colon X_1\to X_2$ in $\StoneER$ is {\em continuous} if $R^{-1}[U]$ is open for each $E_2$-saturated open $U \subseteq X_2$. Let $\StoneEC$ be the wide subcategory of $\StoneER$ whose morphisms are continuous morphisms in $\StoneER$ and define $\GleC$ similarly. Using \cref{t:correspondence continuous rel and sub} it is straightforward to see that the equivalence between $\StoneER$ and $\GleR$ described in 
\cref{rem:functors equivalences}\eqref{i:GleR StoneER equivalence}
restricts to an equivalence between $\StoneEC$ and $\GleC$. By \cite[Thm.~4.16]{BezhanishviliGabelaiaEtAl2019}, $\GleC$ is equivalent to $\KHausC$. Thus, each of $\KHausC$, $\StoneEC$, and $\GleC$ is equivalent or dually equivalent to each of the categories in the diagram above.
\end{remark}

\section{Functional subordinations}\label{sec:functional}

In this section we further restrict our attention to those wide subcategories of $\SubSfive$ and $\KRFrmP$ that encode continuous functions between compact Hausdorff spaces. The wide subcategories of $\SubSfive$ and $\StoneER$ equivalent to $\KHaus$ were described in \cite[Sec.~6]{ABC22a}, where it was shown that they are equivalent to the categories of maps in the allegories $\SubSfive$ and $\StoneER$.
This has resulted in the following notion:

\begin{definition} \cite[Def.~6.4]{ABC22a} 
\begin{enumerate}
\item Call a morphism $T \colon (B_1,S_1) \to (B_2,S_2)$ in $\SubSfive$ \emph{functional} if
\[
\inv{T} \circ T \subseteq S_1 \quad \mbox{and} \quad S_2 \subseteq T \circ \inv{T}.
\]
\item \label{def-subSfiveF}
Let $\SubSfiveF$ be 
the wide subcategory of $\SubSfive$ whose morphisms are functional morphisms, and define $\devF$ similarly.
\end{enumerate}
\end{definition}

\begin{remark}
If $T$ is functional, then $T$ is continuous. Indeed, let $b_1 \S_2 b_2$. Since $T$ is functional, $S_2 \subseteq T \circ \inv{T}$, so there exists $a \in B_1$ such that $b_1 \mathrel{\inv{T}} a$ and $a \T b_2$. Thus, $a \in \inv{T}[b_1]$. Moreover, if $a' \in \inv{T}[b_2]$, then $b_2 \mathrel{\inv{T}} a'$. Therefore, $a \T b_2 \mathrel{\inv{T}} a'$, so $a \S_1 a'$ because $\inv{T} \circ T \subseteq S_1$ by the functionality of $T$. Consequently, $T$ is continuous. Thus, $\SubSfiveF$ is a wide subcategory of $\SubSfiveCS$. Similarly, $\devF$ is a wide subcategory of $\devCS$.
\end{remark}

We now give a characterization of functional morphisms. For another characterization see \cite[Lem.~6.5]{ABC22a}.

\begin{lemma} \label{l:functional-alternative-def}
	Let $T \colon (B_1, S_1) \to (B_2, S_2)$ be a morphism in $\SubSfive$.
	The following conditions are equivalent.
	\begin{enumerate}[label=\normalfont(\arabic*), ref = \arabic*]
		\item \label{i:functional} $T$ is functional.
		\item \label{i:new-functional}
		The following hold for all $a \in B_1$ and $b_1, b_2, b_1', b_2' \in B_2$\textup:
		\begin{enumerate}[label=\normalfont(\alph*), ref = 2\alph*]
			\item \label{i:fun-0-ary} 
			If $a \T 0$, then $a = 0$.
			\item \label{i:fun-2-ary} 
			If $a \T (b_1 \lor b_2)$, $b_1 \S_2 b_1'$, and $b_2 \S_2 b_2'$, then there are $a_1, a_2 \in B_1$ such that $a \S_1 (a_1 \lor a_2)$, $a_1 \T b_1'$, and $a_2 \T b_2'$.
		\end{enumerate}
	\end{enumerate}
\end{lemma}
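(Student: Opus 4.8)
The plan is to prove the two implications \eqref{i:functional}$\Rightarrow$\eqref{i:new-functional} and \eqref{i:new-functional}$\Rightarrow$\eqref{i:functional} separately, unwinding the relational identities $\inv{T} \circ T \subseteq S_1$ and $S_2 \subseteq T \circ \inv{T}$ into pointwise statements. For the forward direction, suppose $T$ is functional. Condition \eqref{i:fun-0-ary}: if $a \T 0$, then since $T$ is compatible we have $a \T b \S_2 0$ for some $b$, hence by \eqref{S5} $b = 0$; applying $\inv{T}$ and using $\inv{T} \circ T \subseteq S_1$ together with $0 \mathrel{\inv{T}} 0$ (which holds by \eqref{S1} applied to $\inv{T}$, using $\neg 0 = 1$) yields $a \S_1 0$, so $a \le 0$ and $a = 0$. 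Condition \eqref{i:fun-2-ary}: from $a \T (b_1 \vee b_2)$ and $S_2 \subseteq T \circ \inv{T}$ applied to $b_i \S_2 b_i'$, obtain $c_i \in B_1$ with $b_i \mathrel{\inv{T}} c_i$ and $c_i \T b_i'$; the goal is to show $a \S_1 (c_1 \vee c_2)$, which by $\inv{T} \circ T \subseteq S_1$ reduces to showing $(b_1 \vee b_2) \mathrel{\inv{T}} (c_1 \vee c_2)$. The latter should follow from $b_i \mathrel{\inv{T}} c_i$ and axiom \eqref{S2} for the subordination $\inv{T}$, noting $\inv{T}$ is a morphism in $\SubSfive$ by the remark preceding \cref{thm:iso}. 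Actually we must be slightly careful: \eqref{S2} gives $(b_1 \vee b_2) \mathrel{\inv{T}} c_i$ only after first arranging a common right-hand side, so the cleaner route is $b_i \mathrel{\inv{T}} c_i \le c_1 \vee c_2$ gives $b_i \mathrel{\inv{T}} (c_1 \vee c_2)$ by \eqref{S4}, then \eqref{S2} gives $(b_1 \vee b_2) \mathrel{\inv{T}} (c_1 \vee c_2)$.

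For the reverse direction, assume \eqref{i:new-functional} holds. To show $S_2 \subseteq T \circ \inv{T}$, take $b_1 \S_2 b_2$; applying \eqref{S7} twice we may interpolate $b_1 \S_2 b \S_2 b' \S_2 b_2$. Since $1 \T 1$ by \eqref{S1}, we have $1 \T (b \vee \neg b)$ trivially (as $b \vee \neg b = 1$); but to feed \eqref{i:fun-2-ary} we want to split $1$ across $b$ and $\neg b$, so we apply it with $a = 1$, the two summands $b$ and $\neg b$, and $b \S_2 b'$, $\neg b \S_2 \neg b_1$ (the latter from $b_1 \S_2 b$ via \eqref{S6}), obtaining $a_1, a_2$ with $1 \S_1 (a_1 \vee a_2)$, $a_1 \T b'$, and $a_2 \T \neg b_1$. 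From $a_2 \T \neg b_1$ we get $b_1 \mathrel{\inv{T}} \neg a_2$, and from $a_1 \T b' \S_2 b_2$ we get $a_1 \T b_2$ by \eqref{S4}. It remains to see $\neg a_2 \le a_1$, i.e. $a_1 \vee a_2 = 1$, which follows from $1 \S_1 (a_1 \vee a_2)$ and \eqref{S5}; hence $b_1 \mathrel{\inv{T}} \neg a_2$, $\neg a_2 \le a_1$, so $b_1 \mathrel{\inv{T}} a_1$ by \eqref{S4}, and thus $b_1 \mathrel{(T \circ \inv{T})} b_2$. To show $\inv{T} \circ T \subseteq S_1$, take $a \mathrel{(\inv{T} \circ T)} a'$, so $a \T b$ and $b \mathrel{\inv{T}} a'$, i.e. $\neg a' \T \neg b$. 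Then $a \wedge \neg a' \T (b \wedge \neg b) = 0$; after interpolating with \eqref{S7} to put this in the form required by \eqref{i:fun-0-ary} (we need $a \wedge \neg a' \T 0$ directly, which we already have since $b \wedge \neg b = 0$ is literally $0$), condition \eqref{i:fun-0-ary} gives $a \wedge \neg a' = 0$, i.e. $a \le a'$. But this only yields $a \le a'$, not $a \S_1 a'$, so this crude argument is insufficient; instead we must interpolate: use \eqref{S7} on $a \T b$ to get $a \T c \S_2 b$ is the wrong direction — rather use that $T$ is compatible to write $a \T c$ with $c \S_2 b$...

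The delicate point, and the one I expect to be the main obstacle, is exactly recovering the \emph{strict} relation $a \S_1 a'$ from $\inv{T} \circ T \subseteq S_1$ rather than a mere inequality: the naive manipulation with \eqref{i:fun-0-ary} only produces $a \le a'$. The fix is to exploit \eqref{i:fun-2-ary} in the degenerate shape where one summand is trivial. Concretely, from $a \T b$ and $b \mathrel{\inv{T}} a'$ (i.e. $\neg a' \T \neg b$), first use \eqref{S7} and compatibility to interpolate: $\neg a' \T \neg b$ compatible with $S_2$ gives $\neg a' \T d \S_2 \neg b$ for some $d$, so $b \S_2 \neg d$ by \eqref{S6}; then $a \T b \S_2 \neg d$ gives, again via compatibility/interpolation, a factorization letting us apply \eqref{i:fun-2-ary} with $a \T ((\neg d) \vee 0)$, $\neg d \S_2 (\text{something below } a')$ and $0 \S_2 0$, producing $a_1, a_2$ with $a \S_1 (a_1 \vee a_2)$, $a_1 \T a'$, $a_2 \T 0$; then \eqref{i:fun-0-ary} forces $a_2 = 0$, so $a \S_1 a_1$, and a final application of $\inv{T} \circ T$-type reasoning (or directly $a_1 \T a'$ combined with \eqref{S5} and monotonicity) gives $a \S_1 a'$. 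Getting the interpolation bookkeeping right so that the right-hand sides line up is the only real work; everything else is routine use of \eqref{S1}--\eqref{S7}, compatibility of $T$ and $\inv{T}$, and the fact that $\inv{\inv{T}} = T$.
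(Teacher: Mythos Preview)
Your plan is essentially the paper's argument for the bulk of the proof. The forward direction for \eqref{i:fun-2-ary} is exactly what the paper does: pick $a_i$ with $b_i \mathrel{\inv{T}} a_i \T b_i'$ from $S_2 \subseteq T \circ \inv{T}$, observe $(b_1 \lor b_2) \mathrel{\inv{T}} (a_1 \lor a_2)$, then apply $\inv{T} \circ T \subseteq S_1$. Your reverse argument for $S_2 \subseteq T \circ \inv{T}$ is also the paper's (you interpolate once more than necessary; a single $b_1 \S_2 b \S_2 b_2$ suffices since you can feed $b \S_2 b_2$ directly into \eqref{i:fun-2-ary}).

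The difference is in handling the equivalence of \eqref{i:fun-0-ary} with $\inv{T} \circ T \subseteq S_1$. The paper does not prove this here at all: it cites \cite[Lem.~6.5(1)]{ABC22a} and then only proves $S_2 \subseteq T \circ \inv{T} \iff$ \eqref{i:fun-2-ary} \emph{assuming} \eqref{i:fun-0-ary} and $\inv{T} \circ T \subseteq S_1$ are already known to be equivalent. You instead try to derive $\inv{T} \circ T \subseteq S_1$ from \eqref{i:fun-0-ary} and \eqref{i:fun-2-ary} together, and your sketch gets tangled. Your \eqref{i:fun-2-ary}-based detour can be made to work, but it is unnecessary: \eqref{i:fun-0-ary} alone already gives $\inv{T} \circ T \subseteq S_1$, and the missing ingredient is simply to interpolate on the \emph{left} via compatibility rather than on the right. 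From $a \T b$ and $T = T \circ S_1$ obtain $a_0$ with $a \S_1 a_0 \T b$; then your ``naive'' step $(a_0 \wedge \neg a') \T (b \wedge \neg b) = 0$ together with \eqref{i:fun-0-ary} gives $a_0 \le a'$, whence $a \S_1 a_0 \le a'$ yields $a \S_1 a'$ by \eqref{S4}. This is the content of the cited lemma, and it closes the gap you identified without invoking \eqref{i:fun-2-ary}.
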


\begin{proof}
By \cite[Lem.~6.5(1)]{ABC22a}, $\inv{T} \circ T \subseteq S_1$ is equivalent to \eqref{i:fun-0-ary}. Therefore, it is sufficient to prove that, under these equivalent conditions, $S_2 \subseteq T \circ \inv{T}$ is equivalent to \eqref{i:fun-2-ary}.

	To prove that $S_2 \subseteq T \circ \inv{T}$ implies \eqref{i:fun-2-ary}, let $a \T (b_1 \lor b_2)$, $b_1 \S_2 b_1'$, and $b_2 \S_2 b_2'$.
	Since $S_2 \subseteq T \circ \inv{T}$, from $b_1 \S_2 b_1'$ and $b_2 \S_2 b_2'$ it follows that there are $a_1, a_2 \in B_1$ such that $b_1 \mathrel{\inv{T}} a_1 \T b_1'$ and $b_2 \mathrel{\inv{T}} a_2 \T b_2'$.
	Therefore, $a \T (b_1 \lor b_2) \mathrel{\inv{\T}} (a_1 \lor a_2)$.
	Since $\inv{T} \circ T \subseteq S_1$, 
	it follows that $a \S_1 (a_1 \lor a_2)$.
	
	To prove that \eqref{i:fun-2-ary} implies $S_2 \subseteq T \circ \inv{T}$,
	let $b_1,b_2 \in B_2$ be such that $b_1 \S_2 b_2$.
	By \eqref{S7}, there is $b \in B_2$ such that $b_1 \S_2 b \S_2 b_2$.
	We have $1 \T (\lnot b \lor b)$.
	By \eqref{S6}, $b_1 \S_2 b$ implies $\lnot b \S_2 \lnot b_1$.
	Thus, by \eqref{i:fun-2-ary}, there are $a_1, a_2 \in B_1$ such that $1 \S_1 (a_1 \lor a_2)$, $a_1 \T \lnot b_1$, and $a_2 \T b_2$.
	By \eqref{S5}, from $1 \S_1 (a_1 \lor a_2)$ it follows that $1 = a_1 \lor a_2$, so $\lnot a_1 \leq a_2$.
Since $a_1 \T \lnot b_1$, we have $b_1 \mathrel{\inv{T}} \neg a_1 \le a_2$, and hence $b_1 \mathrel{\inv{T}} a_2$. Because $b_1 \mathrel{\inv{T}} a_2 \T b_2$, it follows that $b_1 \mathrel{(T \circ \inv{T})} b_2$.
	Thus, $S_2 \subseteq T \circ \inv{T}$, completing the proof.	
\end{proof}

Our main goal in this section is to show that \cref{thm:dual-equivalence} restricts to yield the corresponding dual equivalences for $\SubSfiveF$ and $\devF$. For this we need \cref{lem:functionality SI(T) and B(box)}, which requires the following:

\begin{remark}\label{rem:inverse functional}
Let $T \colon (B_1, S_1) \to (B_2, S_2)$ be a morphism in $\SubSfive$. Since functional morphisms are maps in the allegory $\SubSfive$ \cite[Def.~6.4]{ABC22a}, it follows from \cite[p.~199]{FS90} that $T$ is an isomorphism iff $T$ and $\inv{T}$ are both functional, in which case $\inv{T}$ is the inverse of~$T$.
\end{remark}

\begin{lemma}\label{lem:functionality SI(T) and B(box)}
Let $(B_1,S_1),(B_2,S_2)$ be $\mathsf{S5}$-subordination algebras and $T \colon B_1 \to B_2$ be a morphism in $\SubSfive$. Let also $L_1,L_2$ be compact regular frames and $\Box \colon L_1 \to L_2$ a preframe homomorphism. 
\begin{enumerate}[label=\normalfont(\arabic*), ref = \arabic*]
\item\label{lem:functionality SI(T) and B(box):item1} If $T \colon B_1 \to B_2$ is a functional compatible subordination, then $\RI(T) \colon \RI(B_2) \to \RI(B_1)$ is a frame homomorphism.
\item\label{lem:functionality SI(T) and B(box):item2} If $\Box \colon L_1 \to L_2$ is a frame homomorphism, then $\mathfrak{B}(\Box) \colon \mathfrak{B}L_2 \to \mathfrak{B}L_1$ is functional.
\item\label{lem:functionality SI(T) and B(box):item3} If $T \colon B_1 \to B_2$ is an isomorphism in $\SubSfive$, then $T$ is an isomorphism in $\SubSfiveF$.
\item\label{lem:functionality SI(T) and B(box):item4} If $\Box \colon L_1 \to L_2$ is an isomorphism in $\KRFrmP$, then $\Box$ is an isomorphism in $\KRFrm$.
\end{enumerate}
\end{lemma}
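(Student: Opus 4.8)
The plan is to treat the four items in turn, the last two being essentially formal. For \eqref{lem:functionality SI(T) and B(box):item4}, an isomorphism in $\KRFrmP$ is a bijective preframe homomorphism with preframe-homomorphism inverse; both maps are then monotone and mutually inverse, so $\Box$ is an order isomorphism between complete lattices and hence preserves arbitrary joins. Thus $\Box$ and $\Box^{-1}$ are frame homomorphisms, and since $\KRFrm$ is a wide subcategory of $\KRFrmP$, $\Box$ is an isomorphism in $\KRFrm$. For \eqref{lem:functionality SI(T) and B(box):item3}, \cref{rem:inverse functional} tells us that an isomorphism $T$ in $\SubSfive$ has both $T$ and $\inv{T}$ functional, with $\inv{T}$ inverse to $T$; as $\SubSfiveF$ is a wide subcategory of $\SubSfive$, $T$ and $\inv{T}$ are mutually inverse morphisms of $\SubSfiveF$, so $T$ is an isomorphism there. (This is the stronger statement referred to in \cref{lem:continuity SI(T) and B(box)}\eqref{lem:continuity SI(T) and B(box):item3}.)

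For \eqref{lem:functionality SI(T) and B(box):item1}, recall from \cref{thm: RI contravariant} that $\RI(T)(I)=T^{-1}[I]$ is always a preframe homomorphism; since an arbitrary join is a directed join of finite joins, it suffices to check that $\RI(T)$ preserves the empty join and binary joins. The bottom of $\RI(\mathbf{B}_2)$ is $\{0\}$, and $\RI(T)(\{0\})=\{a\in B_1\mid a\T 0\}=\{0\}$ by \cref{l:functional-alternative-def}\eqref{i:fun-0-ary}. For binary joins, $\RI(T)(I)\vee\RI(T)(J)\subseteq\RI(T)(I\vee J)$ is monotonicity. Conversely, if $a\in T^{-1}[I\vee J]$, then $a\T c$ for some $c\le b_1\vee b_2$ with $b_1\in I$ and $b_2\in J$, so $a\T(b_1\vee b_2)$ by \eqref{S4}. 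Using roundness of $I$ and $J$, choose $b_1'\in I$ and $b_2'\in J$ with $b_1\S_2 b_1'$ and $b_2\S_2 b_2'$; then \cref{l:functional-alternative-def}\eqref{i:fun-2-ary} gives $a_1,a_2\in B_1$ with $a\S_1(a_1\vee a_2)$, $a_1\T b_1'$, $a_2\T b_2'$. Hence $a_1\in T^{-1}[I]$, $a_2\in T^{-1}[J]$, and $a\le a_1\vee a_2$ by \eqref{S5}, so $a\in\RI(T)(I)\vee\RI(T)(J)$.

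For \eqref{lem:functionality SI(T) and B(box):item2}, write $T=\bool(\Box)$, which is a compatible subordination by \cref{lem:boolf compatible}, and note that $a\mathrel{\inv{T}}b\iff\neg b\T\neg a\iff\neg b\prec\Box(\neg a)$ in $L_2$; since $(\neg b)^{*}=b$ for $b\in\bool L_2$, this last condition reads $b\vee\Box(\neg a)=1$. For $\inv{T}\circ T\subseteq{\prec}$ on $\bool L_2$: from $b\T a$ and $a\mathrel{\inv{T}}b''$ we get $b\prec\Box a$ and $b''\vee\Box(\neg a)=1$; now $b\prec\Box a$ gives $b\le\Box a$, so $b\wedge\Box(\neg a)\le\Box(a\wedge\neg a)=\Box 0=0$, whence $\Box(\neg a)\le b^{*}$ and $b''\vee b^{*}=1$, i.e.\ $b\prec b''$. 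For ${\prec}\subseteq T\circ\inv{T}$ on $\bool L_1$: if $a\prec a'$ in $\bool L_1$, applying the frame homomorphism $\Box$ to $\neg a\vee a'=1$ yields $\Box(\neg a)\vee\Box a'=1$ in $L_2$; since compact regular frames are normal (see, e.g., \cite{PP12}), there are $u,w\in L_2$ with $u\wedge w=0$, $\Box(\neg a)\vee w=1$ and $\Box a'\vee u=1$. Put $b=w^{**}\in\bool L_2$; then $w\le b$ gives $b\vee\Box(\neg a)=1$, so $a\mathrel{\inv{T}}b$, while $u\le w^{*}=b^{*}$ gives $b^{*}\vee\Box a'=1$, i.e.\ $b\prec\Box a'$, so $b\T a'$. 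Hence $a\mathrel{(T\circ\inv{T})}a'$, as required.

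The main obstacle is the direction ${\prec}\subseteq T\circ\inv{T}$ of \eqref{lem:functionality SI(T) and B(box):item2}: a frame homomorphism splits $1=\Box(\neg a)\vee\Box a'$ only ``up to $\prec$'', and producing a complemented witness $b\in\bool L_2$ forces an appeal to normality of compact regular frames (equivalently, to join-density of $\bool L_2$ in $L_2$ together with compactness of $1$). The two directions of \eqref{lem:functionality SI(T) and B(box):item2} also exhibit why it genuinely requires $\Box$ to preserve all joins—in particular $\Box 0=0$ and binary joins—rather than only directed joins; and the binary-join case of \eqref{lem:functionality SI(T) and B(box):item1} is where the full strength of \cref{l:functional-alternative-def} enters.
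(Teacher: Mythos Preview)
Your proof is correct. Items \eqref{lem:functionality SI(T) and B(box):item1}, \eqref{lem:functionality SI(T) and B(box):item3}, and \eqref{lem:functionality SI(T) and B(box):item4} are argued exactly as in the paper. For item \eqref{lem:functionality SI(T) and B(box):item2}, however, you take a different route: you verify the defining conditions $\inv{T}\circ T\subseteq{\prec}$ and ${\prec}\subseteq T\circ\inv{T}$ directly, invoking normality of compact regular frames to produce the witness $b=w^{**}\in\bool L_2$. The paper instead applies the characterization of \cref{l:functional-alternative-def} and checks conditions \eqref{i:fun-0-ary} and \eqref{i:fun-2-ary}: from $b\prec\Box(a_1\vee a_2)=\Box a_1\vee\Box a_2$ it interpolates $b\prec b'\prec\Box a_1\vee\Box a_2$ with $b'\in\bool L_2$, and sets $b_i=b'\wedge\Box a_i$. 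Both arguments ultimately rest on the same frame-theoretic content (interpolation of $\prec$ in a compact regular frame is equivalent to normality), but the paper's route keeps the proof of \eqref{lem:functionality SI(T) and B(box):item2} parallel to that of \eqref{lem:functionality SI(T) and B(box):item1} by reusing \cref{l:functional-alternative-def}, while yours has the virtue of making explicit exactly which frame property (normality) drives the construction of the intermediate element.
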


\begin{proof}
	\eqref{lem:functionality SI(T) and B(box):item1}.
	Since $\RI(T)$ is a preframe homomorphism (see \cref{thm: RI contravariant}), it is sufficient to prove that it preserves bottom and binary joins.
	To see that $\RI(T)$ preserves bottom, it is enough to show that
	$T^{-1}[\{0\}] \subseteq \{0\}$, which follows from 
	\cref{l:functional-alternative-def}\eqref{i:fun-0-ary}.
	To see that $\RI(T)$ preserves binary joins,	
	let $I_1, I_2$ be round ideals of $B_2$.
	It is sufficient to prove that $T^{-1}[I_1 \lor I_2] \subseteq T^{-1}[I_1] \lor T^{-1}[I_2]$. 
	Let $a \in T^{-1}[I_1 \lor I_2]$.
	Then there are $b_1 \in I_1$, $b_2 \in I_2$ such that $a \T (b_1 \lor b_2)$.
	Since $I_1$ and $I_2$ are round ideals, there are $b_1' \in I_1$ and $b_2' \in I_2$ such that $b_1 \S_2 b_1'$ and $b_2 \S_2 b_2'$.
	By \cref{l:functional-alternative-def}\eqref{i:fun-2-ary}, there are $a_1, a_2 \in B_1$ such that $a \S_1 (a_1 \lor a_2)$, $a_1 \T b_1'$, and $a_2 \T b_2'$.
	Thus, $a \in T^{-1}[I_1] \lor T^{-1}[I_2]$.

	\eqref{lem:functionality SI(T) and B(box):item2}. We prove that $\mathfrak{B}(\Box)$ satisfies \cref{l:functional-alternative-def}\eqref{i:new-functional}.
	To see \eqref{i:fun-0-ary}, let $b \in \mathfrak{B}L_2$ be such that $b \mathrel{\mathfrak{B}(\Box)}0$, so $b \prec \Box 0$. Since $\Box$ is a frame homomorphism, $\Box 0 = 0$. Therefore, $b \prec 0$, and hence $b = 0$ by \eqref{S5}.
	To see \eqref{i:fun-2-ary}, 
	let $b \in \mathfrak{B}L_2$ and $a_1, a_2, a_1', a_2' \in \mathfrak{B}L_1$ be such that $b \mathrel{\mathfrak{B}(\Box)} (a_1 \lor a_2)$, $a_1 \prec a_1'$, and $a_2 \prec a_2'$.
	Then $b \prec \Box(a_1 \lor a_2)$. But $\Box(a_1 \lor a_2)=\Box a_1 \lor \Box a_2$ because $\Box$ is a frame homomorphism.
	Therefore, $b \prec \Box a_1 \lor \Box a_2$, and so there is $b' \in \mathfrak{B}(\Box)$ such that $b \prec b' \prec \Box a_1 \lor \Box a_2$.
	Set $b_1 = b' \land \Box a_1$ and $b_2 = b' \land \Box a_2$.
	We have $a_i \prec a_i'$ implies $\Box a_i \prec \Box a_i'$ for $i \in \{1, 2\}$.
	Thus, $b_i = b' \land \Box a_i \leq \Box a_i \prec \Box a_i'$, so $b_i \prec \Box a_i'$, and hence $b_i \mathrel{\mathfrak{B}(\Box)} a_i'$.
	Moreover, from $b \prec b'$ and $b \prec \Box a_1 \lor \Box a_2$ it follows that 
	\[
	b \prec b' \land (\Box a_1 \lor \Box a_2) = (b' \land \Box a_1) \lor (b' \land \Box a_2) = b_1 \lor b_2.
	\]
	This proves \eqref{i:fun-2-ary}.

	\eqref{lem:functionality SI(T) and B(box):item3}.
	This follows from \cref{rem:inverse functional}.
	
	\eqref{lem:functionality SI(T) and B(box):item4}.
	In both $\KRFrmP$ and $\KRFrm$ isomorphisms are order-isomorphisms.
\end{proof}

From \cref{thm:dual-equivalence,lem:functionality SI(T) and B(box)} we obtain:

\begin{theorem} \label{t:restriction-functional}
$ $
\begin{enumerate}[label=\normalfont(\arabic*), ref = \arabic*]
\item The dual equivalence between $\SubSfive$ and $\KRFrmP$ restricts to a dual equivalence between their wide subcategories $\SubSfiveF$ and $\KRFrm$.
\item The dual equivalence between $\devS$ and $\KRFrmP$ restricts to a dual equivalence between their wide subcategories $\devF$ and $\KRFrm$.
\end{enumerate}
\end{theorem}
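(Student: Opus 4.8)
The approach is to verify that the dual equivalence of \cref{thm:dual-equivalence}\eqref{thm:dual-equivalence:item1} between $\SubSfive$ and $\KRFrmP$ restricts to the wide subcategories $\SubSfiveF$ and $\KRFrm$; part~(2) will then follow by the same bookkeeping. First I would record that both functors restrict on morphisms: by \cref{lem:functionality SI(T) and B(box)}\eqref{lem:functionality SI(T) and B(box):item1}, if $T$ is a functional compatible subordination then $\RI(T)$ is a frame homomorphism, so $\RI$ cuts down to a contravariant functor $\SubSfiveF \to \KRFrm$ ($\RI$ has the same action on objects as before, and compact regular frames are the objects of both $\KRFrm$ and $\KRFrmP$); dually, by \cref{lem:functionality SI(T) and B(box)}\eqref{lem:functionality SI(T) and B(box):item2}, $\bool$ sends frame homomorphisms to functional morphisms, so $\F \circ \bool$ cuts down to a contravariant functor $\KRFrm \to \SubSfiveF$.

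Next I would check that the natural isomorphisms witnessing the dual equivalence survive the restriction. From the proof of \cref{thm:dual-equivalence}\eqref{thm:dual-equivalence:item1} we have the natural isomorphisms $Q \colon 1_{\SubSfive} \to \F \circ \bool \circ \RI$ and $f \colon 1_{\KRFrmP} \to \RI \circ \F \circ \bool$, whose components are the isomorphisms $Q_{\mathbf B}$ (in $\SubSfive$, by \cref{thm:iso}) and $f_L$ (in $\KRFrmP$). By \cref{lem:functionality SI(T) and B(box)}\eqref{lem:functionality SI(T) and B(box):item3} each $Q_{\mathbf B}$ is already an isomorphism in $\SubSfiveF$, and by \cref{lem:functionality SI(T) and B(box)}\eqref{lem:functionality SI(T) and B(box):item4} each $f_L$ is already an isomorphism in $\KRFrm$; since the inclusions $\SubSfiveF \hookrightarrow \SubSfive$ and $\KRFrm \hookrightarrow \KRFrmP$ are faithful and identity-on-objects, the naturality squares, being valid in the ambient categories, remain valid in the wide subcategories. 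Hence $Q$ and $f$ restrict to natural isomorphisms there, and $\RI$ and $\F \circ \bool$ form a dual equivalence between $\SubSfiveF$ and $\KRFrm$.

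For part~(2) I would run the identical argument with the functors $\RI \circ \F \colon \devF \to \KRFrm$ and $\bool \colon \KRFrm \to \devF$ — noting that $\devF$ is a wide subcategory of $\SubSfiveF$, so all the inputs above apply verbatim — using the natural isomorphism $Q' \colon 1_{\devS} \to \bool \circ \RI \circ \F$ (with components $Q_{\mathbf B}$ for $\mathbf B \in \devF$) together with the restriction of $f$. There is no real obstacle here: the genuine content is entirely contained in \cref{lem:functionality SI(T) and B(box)}, and what remains is the routine observation that a dual equivalence presented by a pair of functors and natural isomorphisms, all of whose data already lies in a pair of wide subcategories, automatically restricts to those subcategories.
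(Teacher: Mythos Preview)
Your proposal is correct and follows exactly the paper's approach: the paper's proof is a one-line appeal to \cref{thm:dual-equivalence} and \cref{lem:functionality SI(T) and B(box)}, and you have simply spelled out the routine verification that these two results combine to give the restricted dual equivalence. The content is identical; you have merely made explicit what the paper leaves to the reader.
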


In addition, we have:

\begin{theorem}[{\cite[Thm.~6.18]{ABC22a}}]\label{thm:dev and devf iso}
$\dev$ and $\devF$ are dually isomorphic. 
\end{theorem}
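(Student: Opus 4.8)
The plan is to obtain the statement by restricting the dual isomorphism between $\devCS$ and $\devC$ established in \cref{thm: continuous DeV} to matching wide subcategories on the two sides. On the $\devC$ side the relevant subcategory is $\dev$ itself: a short computation from \eqref{M1}--\eqref{M4} (using in particular that \eqref{M1} and \eqref{M2} force $f(a)\le\neg f(\neg a)$, so that \eqref{M3} together with \eqref{S3} and \eqref{S4} delivers de Vries multiplicativity) shows that every de Vries morphism is a lower continuous de Vries multiplicative map; conversely a lower continuous de Vries multiplicative map automatically satisfies \eqref{M2} (see \cref{rem:box diamond upper lower}\eqref{rem:box diamond upper lower:item2} and \cite[Prop.~4.15(2)]{BBH15}) and \eqref{M4}, so among the $\devC$-morphisms the de Vries morphisms are precisely those additionally satisfying $\Box 0=0$ and \eqref{M3}. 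Since composition in $\dev$ is given by the same formula as composition in $\devC$ and the identity functions are the identities in both (using \eqref{S8}), $\dev$ is a wide subcategory of $\devC$. On the $\devCS$ side the relevant subcategory is $\devF$, which is a wide subcategory of $\devCS$ since functional morphisms are continuous and are closed under composition.

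The key step is the claim that, for a morphism $T\colon B_1\to B_2$ in $\devCS$ between de Vries algebras, $T$ is functional if and only if $\Box_T\colon B_2\to B_1$ (defined by $\Box_T b=\bigvee T^{-1}[b]$) is a de Vries morphism. For the forward direction one checks the two extra conditions for $\Box_T$: the condition $\Box_T 0=0$ is immediate from \cref{l:functional-alternative-def}\eqref{i:fun-0-ary}, and since $\neg\Box_T(\neg c)=\bigwedge\inv{T}[c]$ and $\Box_T d=\bigvee T^{-1}[d]$ (here $B_1$ is complete), verifying \eqref{M3} for $\Box_T$ amounts to showing $\bigwedge\inv{T}[c]\S_1\bigvee T^{-1}[d]$ whenever $c\S_2 d$; interpolating $c\S_2 e\S_2 d$ by \eqref{S7} and using functionality ($S_2\subseteq T\circ\inv{T}$ produces $p\in\inv{T}[c]$ and $q\in T^{-1}[d]$ with $p\mathrel{(\inv{T}\circ T)}q$, and $\inv{T}\circ T\subseteq S_1$ then gives $p\S_1 q$), the claim follows by \eqref{S4}.

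For the backward direction, assume $\Box_T$ is a de Vries morphism and verify the characterization of functionality in \cref{l:functional-alternative-def}. Condition \eqref{i:fun-0-ary} follows from $a\T 0\Rightarrow a\le\Box_T 0=0$. For \eqref{i:fun-2-ary} the main ingredient is a finite-cover inequality for de Vries morphisms: if $f$ is a de Vries morphism and $b_1\S b_1'$, $b_2\S b_2'$, then $f(b_1\vee b_2)\S(f(b_1')\vee f(b_2'))$. This is proved algebraically, hence choice-free: \eqref{M3} with \eqref{S6} gives $\neg f(b_i')\S f(\neg b_i)$, so $\neg(f(b_1')\vee f(b_2'))\S f(\neg b_1)\wedge f(\neg b_2)=f(\neg(b_1\vee b_2))\le\neg f(b_1\vee b_2)$ by \eqref{M1}, \eqref{M2}, \eqref{S3} and \eqref{S4}, whence $f(b_1\vee b_2)\S(f(b_1')\vee f(b_2'))$ by \eqref{S6}. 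Now given $a\T(b_1\vee b_2)$, $b_1\S_2 b_1'$, $b_2\S_2 b_2'$, interpolate $b_i\S_2 b_i''\S_2 b_i'$; then $a\le\Box_T(b_1\vee b_2)$, applying the inequality to $\Box_T$ yields $a\S_1(\Box_T b_1''\vee\Box_T b_2'')$, and continuity of $T$ (\cref{rem:cont}\eqref{rem:cont:item2:cond2}, available since $B_1$ is complete) gives $\Box_T b_i''\T b_i'$, so $a_i:=\Box_T b_i''$ witness \eqref{i:fun-2-ary}.

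Granting the claim, the mutually inverse assignments $T\mapsto\Box_T$ and $\Box\mapsto T_\Box$ underlying the dual isomorphism of \cref{thm: continuous DeV} (with $T_{\Box_T}=T$ by \cref{lem:continuity BoxT and TBox}\eqref{lem:continuity BoxT and TBox:item4}) restrict to mutually inverse bijections between the morphisms of $\devF$ and those of $\dev$; being the identity on objects and reversing composition, they exhibit $\devF$ as dually isomorphic to $\dev$, which is the assertion. The step I expect to be the main obstacle is the backward direction of the claim, specifically condition \eqref{i:fun-2-ary}: because $T^{-1}[-]$ need not be a principal ideal one cannot take $a_i=\Box_T b_i'$ directly, and it is precisely the finite-cover inequality combined with the extra interpolation $b_i\S_2 b_i''\S_2 b_i'$ that absorbs this obstruction while keeping the argument choice-free.
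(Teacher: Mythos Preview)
The paper does not supply its own proof of this statement: it is simply cited from \cite[Thm.~6.18]{ABC22a}. Your argument is a correct, self-contained alternative that stays entirely within the machinery of the present paper, deriving the result by restricting the dual isomorphism $\devCS \leftrightarrow \devC$ of \cref{thm: continuous DeV} along the two inclusions $\devF \hookrightarrow \devCS$ and $\dev \hookrightarrow \devC$. The key claim---that a continuous compatible subordination $T$ between de Vries algebras is functional iff $\Box_T$ is a de Vries morphism---is established cleanly: the forward direction uses $\inv{T}\circ T \subseteq S_1$ and $S_2 \subseteq T\circ\inv{T}$ together with an interpolation via \eqref{S7}, and the backward direction hinges on your ``finite-cover inequality'' $f(b_1\vee b_2)\S f(b_1')\vee f(b_2')$ for de Vries morphisms, combined with \cref{rem:cont}\eqref{rem:cont:item2:cond2}. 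All steps check out, including the verification that $\dev$ sits inside $\devC$ with the same composition and identities.

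By contrast, the proof in \cite{ABC22a} proceeds directly: it sets up the bijection between functional subordinations and de Vries morphisms without passing through the continuous level, relying instead on the characterizations of functionality developed there (and ultimately on the allegory structure of $\SubSfive$). Your route has the advantage of reusing \cref{thm: continuous DeV} and \cref{lem:continuity BoxT and TBox} so that only the additional axioms \eqref{M1} and \eqref{M3} need to be matched against functionality; the direct route avoids the detour through $\devC$ but must verify compatibility, continuity-like conditions, and the composition law from scratch.
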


Consequently, we arrive at the following analogue of the commutative diagram of equivalences and dual equivalences given at the end of \cref{sec: cont subs}. 
\[
\begin{tikzcd}[column sep=6ex, row sep=6ex]
& \SubSfiveF \arrow[dl, "\RI"'] \arrow[dr, shift left=1ex, "\NI"] &  &\\
\KRFrm \arrow[rr, "\bool"] & & \devF \arrow[ul, hookrightarrow, shift left=0.5ex, "\F"] \arrow[r, leftrightarrow, "d"] & \dev
\end{tikzcd}
\]

\begin{remark} \label{def-StoneEF} \label{def-Gle}
We recall from \cite[Def.~6.1]{ABC22a} that $\StoneEF$ is the wide subcategory of $\StoneER$ whose morphisms $R\colon (X_1,E_1)\to (X_2,E_2)$ satisfy $E_1\subseteq \conv{R}\circ R$ and $R\circ\conv{R}\subseteq E_2$. We call such morphisms \emph{functional} and define $\Gle$ similarly. By \cite[Thm.~6.9]{ABC22a}, the categories $\SubSfiveF$, $\devF$, $\StoneEf$, $\Gle$, and $\KHaus$  are equivalent. Thus, each of these is equivalent or dually equivalent to the categories in the above diagram. 
\end{remark}

We thus arrive at the following diagram, in which empty boxes of the diagram in \cref{fig:diagram1} are filled. The number under each double arrow indicates the corresponding statement in the body of the paper.

\begin{figure}[h!]
\[
\begin{tikzcd}[column sep=0.90cm]
\SubSfive \arrow[r, leftrightarrow, "\ref{thm: NI equivalence}"'] & \devS \arrow[rr, leftrightarrow, "d", "\ref{thm:dual-equivalence}"'] & & \KRFrmP \arrow[r, leftrightarrow, "d", "\ref{thm:isbell-gen}"'] & \KHausR \arrow[r, leftrightarrow, "\ref{thm:equiv-ABC}"'] & \StoneER \arrow[r, leftrightarrow, "\ref{thm:equiv-ABC}"'] & \GleR \\
\SubSfiveCS \arrow[u, hookrightarrow] \arrow[r, leftrightarrow, "\ref{thm: continuous case}"'] & \devCS \arrow[u, hookrightarrow] \arrow[r, leftrightarrow, "d", "\ref{thm: continuous DeV}"'] & \devC \arrow[r, leftrightarrow, "{\ref{thm:KRFrmC-dual-KHausC},\ref{t:continuous}}"'] & \KRFrmC \arrow[u, hookrightarrow] \arrow[r, leftrightarrow, "d", "\ref{thm:KRFrmC-dual-KHausC}"'] & \KHausC \arrow[u, hookrightarrow] \arrow[r, leftrightarrow, "\ref{def-StoneEC}"'] & \StoneEC \arrow[u, hookrightarrow] \arrow[r, leftrightarrow, "\ref{def-StoneEC}"'] & \GleC \arrow[u, hookrightarrow] \\
\SubSfiveF \arrow[u, hookrightarrow] \arrow[r, leftrightarrow, "\ref{t:restriction-functional}"'] & \devF \arrow[u, hookrightarrow] \arrow[r, leftrightarrow, "d", "\ref{thm:dev and devf iso}"'] & \dev \arrow[u, hookrightarrow] \arrow[r, leftrightarrow, "{\ref{t:Isbell},\ref{t:de-Vries-duality}}"'] & \KRFrm \arrow[u, hookrightarrow] \arrow[r, leftrightarrow, "d", "\ref{t:Isbell}"'] & \KHaus \arrow[u, hookrightarrow] \arrow[r, leftrightarrow, "\ref{def-StoneEF}"'] & \StoneEF \arrow[u, hookrightarrow] \arrow[r, leftrightarrow, "\ref{def-StoneEF}"'] & \Gle \arrow[u, hookrightarrow]
\end{tikzcd}
\]
\caption{}\label{fig:diagram2}
\end{figure}

For the reader's convenience we also list all the categories involved in the diagram.

\begin{table}[H]
	\begin{center}	
		\begin{tabular}{ | p{\widthof{\textbf{Category}}} | p{\widthof{Compact Hausdorff spaces}} | p{\widthof{Continuous compatible closed relations}} |}
			\hline
			\textbf{Category} & \textbf{Objects} & \textbf{Morphisms}\\
			\hline
			$\SubSfive$ & $\mathsf{S5}$-subordination algebras & Compatible subordinations\\
			$\SubSfiveCS$ & $\mathsf{S5}$-subordination algebras & Continuous compatible subordinations\\
			$\SubSfiveF$ & $\mathsf{S5}$-subordination algebras & Functional compatible subordinations\\			$\devS$ & De Vries algebras & Compatible subordinations\\
			$\devCS$ & De Vries algebras & Continuous compatible subordinations\\
			$\devF$ & De Vries algebras & Functional compatible subordinations\\
			$\devC$ & De Vries algebras & Lower continuous de Vries mult. maps\\
			$\dev$ & De Vries algebras & De Vries morphisms\\
			\hline
		\end{tabular}
	\end{center}
	\caption{Categories of subordination algebras.}\label{table1}
\end{table}
\begin{table}[H]
	\begin{center}	
		\begin{tabular}{ | p{\widthof{\textbf{Category}}} | p{\widthof{Compact Hausdorff spaces}} | p{\widthof{Continuous compatible closed relations}}|}
			\hline
			\textbf{Category} & \textbf{Objects} & \textbf{Morphisms}\\ 
			\hline
			$\KRFrmP$ & Compact regular frames & Preframe homomorphisms\\
			$\KRFrmC$ & Compact regular frames & Continuous preframe homomorphisms\\
			$\KRFrm$ & Compact regular frames & Frame homomorphisms\\
			\hline
		\end{tabular}
	\end{center}
	\caption{Categories of compact regular frames.}\label{table2}
\end{table}
\begin{table}[H]
	\begin{center}	
		\begin{tabular}{ | p{\widthof{\textbf{Category}}} | p{\widthof{Compact Hausdorff spaces}} | p{\widthof{Continuous compatible closed relations}} |}
			\hline
			\textbf{Category} & \textbf{Objects} & \textbf{Morphisms}\\
			\hline
			$\KHausR$ & Compact Hausdorff spaces & Closed relations\\
			$\KHausC$ & Compact Hausdorff spaces & Continuous relations\\
			$\KHaus$ & Compact Hausdorff spaces & Continuous functions\\
			\hline
		\end{tabular}
	\end{center}
	\caption{Categories of compact Hausdorff spaces.}\label{table3}
\end{table}
\begin{table}[H]
	\begin{center}	
		\begin{tabular}{ | p{\widthof{\textbf{Category}}} | p{\widthof{Compact Hausdorff spaces}} | p{\widthof{Continuous compatible closed relations}} |}
			\hline
			\textbf{Category} & \textbf{Objects} & \textbf{Morphisms}\\
			\hline
			$\StoneER$ & $\mathsf{S5}$-subordination spaces & Compatible closed relations\\
			$\StoneEC$ & $\mathsf{S5}$-subordination spaces & Continuous compatible closed relations\\
			$\StoneEF$ & $\mathsf{S5}$-subordination spaces & Functional compatible closed relations\\
			$\GleR$ & Gleason spaces & Compatible closed relations\\
			$\GleC$ & Gleason spaces & Continuous compatible closed relations\\
			$\Gle$ & Gleason spaces & Functional compatible closed relations\\
			\hline
		\end{tabular}
	\end{center}
	\caption{Categories of subordination spaces.}\label{table4}
\end{table}

\section{Dual descriptions of the completions}\label{sec: dually}

In this final section we give dual descriptions of the round ideal and MacNeille completions of $\mathsf{S5}$-subordination algebras. 

Recall that if $B$ is a boolean algebra and $X$ is the Stone space of $B$, then the isomorphism $\varphi \colon B \to \Clop(X)$ is given by the Stone map $\varphi(a) = \{ x \in X \mid a \in x \}$. This isomorphism induces an 
order-isomorphism $\Phi$ between the frame of ideals of $B$ and the frame of open subsets of $X$, as well as an order-isomorphism $\Psi$ between the frame of filters of $B$ and the frame of closed subsets of $X$ ordered by reverse inclusion (see, e.g., \cite[Thm.~33]{GH09}). The isomorphisms are defined as follows: 
\begin{equation*}
\Phi(I)=\bigcup\{\varphi(a)\mid a\in I\} \quad \mbox{and} \quad \Psi(F)=\bigcap\{\varphi(a)\mid a\in F\}.
\end{equation*}
It belongs to folklore that for an ideal $I$ and filter $F$ of $B$, we have 
\begin{equation}\label{eq:phi psi negation}
\begin{array}{lll}
& \Phi(\neg F)=\Psi(F)^c, \qquad & \Phi(L(F))=\int\, (\Psi(F)), \\
& \Psi(\neg I)=\Phi(I)^c, \qquad & \Psi(U(I))=\cl\, (\Phi(I)).
\end{array}
\end{equation}
For the reader's convenience, we give a proof of $\Psi(U(I))=\cl\, (\Phi(I))$. The other three equalities are proved similarly.
Since $b \in U(I)$ iff $\varphi(a) \subseteq \varphi(b)$ for each $a \in I$, we have
\begin{align*}
	\Psi(U(I)) & = \bigcap \{ \varphi(b) \mid b \in U(I) \} = \bigcap \{ \varphi(b) \mid \Phi(I) \subseteq \varphi(b) \} =\cl (\Phi(I)),
\end{align*}
where the last equality follows from the fact that $X$ is a Stone space, hence the closure of a set is the intersection of the clopen sets containing it.

Let $(B,S)$ be an $\mathsf{S5}$-subordination algebra. We recall from \cref{rem:functors equivalences}\eqref{i:functor-Ult} that the $\mathsf{S5}$-subordination space of $(B,S)$ is $(X,R_S)$ where $X$ is the Stone space of $B$ and $R_S$ is given by $x \R_S y$ iff $S[x] \subseteq y$. For simplicity, we write $(X,R)$ instead of $(X,R_S)$.

\begin{lemma}\label{lem:S and * dually}
Let $(B,S)$ be an $\mathsf{S5}$-subordination algebra and $(X,R)$ its $\mathsf{S5}$-subordination space.
\begin{enumerate}[label=\normalfont(\arabic*), ref = \arabic*]
\item \label{i:ideal} If $I$ is an ideal of $B$, then $\Phi(S^{-1}[I])=\Box_{R} \Phi(I)$. 
\item \label{i:filter} If $F$ is a filter of $B$, then $\Psi(S[F])=R[\Psi(F)]$.
\end{enumerate}
\end{lemma}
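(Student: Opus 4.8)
The plan is to transfer both identities to the Stone space $X$ of $B$, whose points are the ultrafilters of $B$, on which $\varphi(a)=\{x\in X\mid a\in x\}$, $\Phi(I)=\{x\in X\mid x\cap I\neq\varnothing\}$, $\Psi(F)=\{x\in X\mid F\subseteq x\}$, and $x\mathrel{R}y$ iff $S[x]\subseteq y$. I would prove (1) directly by unwinding both sides into statements about ultrafilters, and then derive (2) from (1) using \cref{lem:operators and negations} together with the negation dualities in \eqref{eq:phi psi negation}.

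First I would record the auxiliary fact that for every ultrafilter $x$ the set $S[x]=\{b\in B\mid\exists a\in x\text{ with }a\mathrel{S}b\}$ is a proper filter of $B$: it is upward closed and closed under binary meets by \eqref{S4} and \eqref{S3} (using that $x$ is a filter), it contains $1$ by \eqref{S1}, and it omits $0$ by \eqref{S5}. For (1), unwinding the definitions shows $\Phi(S^{-1}[I])=\{x\in X\mid S[x]\cap I\neq\varnothing\}$, while, since $X\setminus\Phi(I)=\{y\in X\mid y\cap I=\varnothing\}$ and $\Box_R U=X\setminus R^{-1}[X\setminus U]$, the set $\Box_R\Phi(I)$ consists of exactly those $x$ such that every ultrafilter $y\supseteq S[x]$ meets $I$. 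Thus (1) reduces to the equivalence
\[
S[x]\cap I\neq\varnothing \quad\Longleftrightarrow\quad \text{every ultrafilter } y\supseteq S[x] \text{ meets } I .
\]
The forward implication is immediate. For the converse I would argue by contraposition: if $S[x]\cap I=\varnothing$, then since $S[x]$ is a proper filter and $I$ an ideal, the prime filter theorem produces an ultrafilter $y\supseteq S[x]$ with $y\cap I=\varnothing$; this $y$ witnesses $x\mathrel{R}y$ together with $y\cap I=\varnothing$, so the right-hand condition fails.

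For (2), $\neg F$ is an ideal, $S[F]$ is a filter by \cref{rem:round ideal}\eqref{rem:round ideal:item2}, and $S^{-1}[\neg F]=\neg S[F]$ by \cref{lem:operators and negations}. Applying part (1) to the ideal $\neg F$ and rewriting complements via the identity $\Phi(\neg G)=\Psi(G)^c$ from \eqref{eq:phi psi negation}, I get
\begin{align*}
\Psi(S[F])^c &= \Phi(\neg S[F]) = \Phi(S^{-1}[\neg F]) = \Box_R\Phi(\neg F)\\
&= \Box_R(\Psi(F)^c) = X\setminus R^{-1}[\Psi(F)],
\end{align*}
so $\Psi(S[F])=R^{-1}[\Psi(F)]$. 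Finally, since $S$ is an $\mathsf{S5}$-subordination, the relation $R=R_S$ is symmetric, hence $R^{-1}=R$ and $\Psi(S[F])=R[\Psi(F)]$, as required. The only step that is not formal manipulation of the Stone correspondence is the prime filter extension used in (1); everything else is definition-chasing, so I expect that (rather mild) step to be the main obstacle. Alternatively, one could prove (2) first by a Zorn's-lemma filter-extension argument and then deduce (1), but routing through (1) confines the choice-theoretic input to a single standard lemma.
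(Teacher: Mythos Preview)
Your proof is correct, and your derivation of \eqref{i:filter} from \eqref{i:ideal} via \cref{lem:operators and negations} and \eqref{eq:phi psi negation} is exactly what the paper does (the paper invokes \cref{rem:box}\eqref{rem:box:item2} for the final step, which amounts to your observation that $R$ is symmetric).

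For \eqref{i:ideal} you take a genuinely different route. The paper stays at the level of clopens: it rewrites $\Phi(S^{-1}[I])$ as $\bigcup\{\varphi(a)\mid \exists b\in I:a\S b\}$, invokes the correspondence $a\S b\Leftrightarrow R[\varphi(a)]\subseteq\varphi(b)$ from \cite[Lem.~2.20]{BBSV17}, then uses compactness of $R[\varphi(a)]$ to pass to $R[\varphi(a)]\subseteq\Phi(I)$, and finally uses that $\Box_R\Phi(I)$ is open and clopens are a basis. You instead work pointwise: you identify $\Phi(S^{-1}[I])$ with $\{x\mid S[x]\cap I\neq\varnothing\}$ and $\Box_R\Phi(I)$ with $\{x\mid\text{every ultrafilter }y\supseteq S[x]\text{ meets }I\}$, and close the gap with the prime filter theorem. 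Both arguments ultimately rest on the Boolean prime ideal theorem (the cited correspondence from \cite{BBSV17} already uses it), so neither is more constructive; the paper's version has the advantage of packaging the choice into a single quotable duality fact, while yours is more self-contained and makes the role of BPI explicit.
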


\begin{proof}
\eqref{i:ideal}. 
We have
\begin{align*}
\Phi(S^{-1}[I]) &= \bigcup \{ \varphi(a) \mid a \in S^{-1}[I] \} = \bigcup \{ \varphi(a) \mid \exists \, b \in I : a \S b \}\\
&=\bigcup \{ \varphi(a) \mid \exists \, b \in I : R[\varphi(a)] \subseteq \varphi(b) \} = \bigcup \{ \varphi(a) \mid R[\varphi(a)] \subseteq \Phi(I) \}\\
&= \bigcup \{ \varphi(a) \mid \varphi(a) \subseteq\Box_R \Phi(I) \} = \Box_R \Phi(I),
\end{align*}
where the third equality follows from the fact that $a \S b$ iff $R[\varphi(a)] \subseteq \varphi(b)$ (see, e.g., \cite[Lem.~2.20]{BBSV17}); the fourth from the fact that $R[\varphi(a)]$ is closed, hence compact in $X$; and the last from the fact that $\Box_R \Phi(I)$ is open and $\{ \varphi(a) \mid a \in B\}$ forms a basis for $X$.
	
\eqref{i:filter}. 
We have\textup:
\begin{align*}
\Psi(S[F]) & =(\Phi(\neg S[F]))^c & \text{(by~\eqref{eq:phi psi negation})}\\
& =(\Phi(S^{-1}[\neg F]))^c & \text{(by~\cref{lem:operators and negations})}\\
& =(\Box_R\Phi(\neg F))^c & \text{(by item~\eqref{i:ideal})}\\
& =(\Box_R(\Psi(F)^c))^c & \text{(by~\eqref{eq:phi psi negation})}\\
& =R[\Psi(F)] & \text{(by~\cref{rem:box}\eqref{rem:box:item2}).} & \qedhere
\end{align*}
\end{proof}

We recall from the introduction that $\O(X)$ denotes the frame of open subsets of a topological space $X$. 
Since the set of  $R$-saturated open  subsets of an $\mathsf{S5}$-subordination space $(X,R)$ forms a subframe of $\O(X)$, it is a frame.

\begin{definition}
For an $\mathsf{S5}$-subordination space $\mathbf X=(X,R)$ let $\O_R(\mathbf X)$ be the frame of  $R$-saturated open  subsets of $X$.  
\end{definition}

\begin{lemma}\label{lem: SI(B) iso to ORS(Ult(B))}
Let $\mathbf B=(B,S)$ be an $\mathsf{S5}$-subordination algebra and $\mathbf X=(X,R)$ its $\mathsf{S5}$-subordination space. 
An ideal $I$ of $B$ is a round ideal iff $\Phi(I)$ is an  $R$-saturated open subset of $X$.
Therefore, $\RI(\mathbf B)$ is isomorphic to $\O_R(\mathbf X)$.
\end{lemma}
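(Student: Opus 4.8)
The statement has two parts: an iff characterizing which ideals are round in terms of their images under $\Phi$, and the resulting isomorphism $\RI(\mathbf B)\cong\O_R(\mathbf X)$. My plan is to prove the iff first and then deduce the isomorphism by restricting the known order-isomorphism $\Phi$ between the frame of ideals of $B$ and $\O(X)$.

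For the first part, recall from \cref{rem:round ideal}\eqref{rem:round ideal:item1} that an ideal $I$ is round iff $I = S^{-1}[I]$. Applying the order-isomorphism $\Phi$ (which reflects equality since it is injective), this is equivalent to $\Phi(I) = \Phi(S^{-1}[I])$. By \cref{lem:S and * dually}\eqref{i:ideal}, $\Phi(S^{-1}[I]) = \Box_R\Phi(I)$. So $I$ is round iff $\Phi(I) = \Box_R\Phi(I)$. Now $\Phi(I)$ is an open subset of $X$, and $R$ is an equivalence relation (since $S$ is an $\mathsf{S5}$-subordination), so by \cref{rem:box}\eqref{rem:box:item2} the condition $\Box_R\Phi(I) = \Phi(I)$ is exactly the statement that $\Phi(I)$ is $R$-saturated. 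This establishes the iff.

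For the second part, $\Phi$ restricts to a bijection between $\RI(\mathbf B)$ and $\O_R(\mathbf X)$: by the iff just proved it maps $\RI(\mathbf B)$ into $\O_R(\mathbf X)$, and since every $R$-saturated open subset of $X$ is of the form $\Phi(I)$ for a unique ideal $I$ (as $\Phi$ is an order-isomorphism onto $\O(X)$), the iff read backwards shows this $I$ is round, so the restriction is onto. As a restriction of an order-isomorphism to a subset, it is an order-isomorphism onto its image, hence $\RI(\mathbf B)$ and $\O_R(\mathbf X)$ are isomorphic as frames.

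I do not expect a genuine obstacle here, since all the work has been done in \cref{lem:S and * dually} and \cref{rem:box}. The only point requiring a little care is making sure we invoke $R$ being an \emph{equivalence} relation (so that \cref{rem:box}\eqref{rem:box:item2} applies and $\Box_R$ computes the largest $R$-saturated subset); this is guaranteed because $(X,R)$ is the $\mathsf{S5}$-subordination space of an $\mathsf{S5}$-subordination algebra, so $R = R_S$ is closed and, by \eqref{S5}--\eqref{S7}, reflexive, symmetric, and transitive.
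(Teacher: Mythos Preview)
Your proposal is correct and follows essentially the same approach as the paper: reduce roundness to $I=S^{-1}[I]$, apply $\Phi$ and \cref{lem:S and * dually}\eqref{i:ideal} to get $\Phi(I)=\Box_R\Phi(I)$, identify this with $R$-saturation via \cref{rem:box}\eqref{rem:box:item2}, and then restrict the order-isomorphism $\Phi$. Your write-up is slightly more explicit than the paper's (you spell out the appeal to \cref{rem:box}\eqref{rem:box:item2} and why $R$ is an equivalence relation), but the argument is the same.
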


\begin{proof}
We have that $I$ is a round ideal iff $I=S^{-1}[I]$. Since $\Phi$ is an isomorphism, it follows from \cref{lem:S and * dually}\eqref{i:ideal} that $I$ is a round ideal iff $\Phi(I) = \Box_R \Phi(I)$. Therefore, $I$ is a round ideal iff $\Phi(I)$ is $R$-saturated. 
Thus, the restriction of $\Phi$ is an isomorphism from $\RI(\mathbf B)$ to $\O_R(\mathbf X)$.
\end{proof}

Let $\mathbf{X}=(X,R)$ be an $\mathsf{S5}$-subordination space and $\pi \colon X \to  X/R$ the quotient map given by $\pi(x)=[x]$. It is well known that $\pi$ lifts to an isomorphism between $\O(X/R)$ and $\O_R(\mathbf X)$ (see, e.g., \cite[Prop.~2.4.3]{Eng89}).
This together with \cref{lem: SI(B) iso to ORS(Ult(B))} yields the following result, which by Isbell duality gives an alternative proof of \cref{thm:Round ideals frame}\eqref{thm:Round ideals frame:item4}.

\begin{theorem}\label{thm:RI(B) iso O(X/R)}
Let $\mathbf{B}=(B,S)$ be an $\mathsf{S5}$-subordination algebra and $\mathbf X=(X,R)$ its subordination space. Then
$\RI(\mathbf{B})$ is isomorphic to $\O(X/R)$.
\end{theorem}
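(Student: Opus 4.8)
The plan is to simply chain together the two isomorphisms that have already been established in the immediately preceding discussion, so the proof should be extremely short. By \cref{lem: SI(B) iso to ORS(Ult(B))}, the restriction of $\Phi$ gives an isomorphism of frames $\RI(\mathbf{B}) \cong \O_R(\mathbf{X})$. By the standard fact about quotient topologies recalled just before the statement (e.g.\ \cite[Prop.~2.4.3]{Eng89}), the assignment $U \mapsto \pi^{-1}[U]$ is an isomorphism $\O(X/R) \cong \O_R(\mathbf{X})$, where $\pi\colon X \to X/R$ is the quotient map. Composing the first isomorphism with the inverse of the second yields the desired isomorphism $\RI(\mathbf{B}) \cong \O(X/R)$.

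Concretely, first I would recall that since $R$ is a closed equivalence relation on the Stone space $X$, the quotient $X/R$ is a compact Hausdorff space and $\pi$ is a quotient map, so that $U \subseteq X/R$ is open iff $\pi^{-1}[U]$ is open in $X$; moreover a subset $V$ of $X$ is of the form $\pi^{-1}[U]$ for some $U \subseteq X/R$ precisely when $V$ is $R$-saturated, and $V$ is open iff $\pi^{-1}[\pi[V]] = V$ is open. Hence $U \mapsto \pi^{-1}[U]$ is an order-isomorphism (and thus a frame isomorphism) from $\O(X/R)$ onto $\O_R(\mathbf{X})$. Then I would invoke \cref{lem: SI(B) iso to ORS(Ult(B))} to identify $\RI(\mathbf{B})$ with $\O_R(\mathbf{X})$, and conclude.

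There is essentially no obstacle here: all the real work has been done in \cref{lem:S and * dually} and \cref{lem: SI(B) iso to ORS(Ult(B))}, and the only additional input is the textbook fact that open subsets of a quotient space correspond to saturated open subsets of the total space. The one minor point to be careful about is that this correspondence is genuinely a frame isomorphism and not merely a bijection, but this is immediate since it is an order-isomorphism between complete lattices and order-isomorphisms between frames are frame isomorphisms. Finally, I would remark (as the surrounding text already does) that combining this with Isbell duality \cref{t:Isbell} gives an alternative, more conceptual proof that $\RI(\mathbf{B})$ is a compact regular frame, reproving \cref{thm:Round ideals frame}\eqref{thm:Round ideals frame:item4}.
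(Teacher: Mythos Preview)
Your proposal is correct and follows essentially the same approach as the paper: the paper's proof is precisely to combine \cref{lem: SI(B) iso to ORS(Ult(B))} with the standard fact (cited as \cite[Prop.~2.4.3]{Eng89}) that $\pi$ lifts to an isomorphism between $\O(X/R)$ and $\O_R(\mathbf X)$, and then to note the consequence via Isbell duality for \cref{thm:Round ideals frame}\eqref{thm:Round ideals frame:item4}.
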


We recall that the MacNeille completion of a boolean algebra $B$ is isomorphic to $\RO(X)$ where $X$ is the Stone space of $B$ (see, e.g.,~\cite[Thm.~40]{GH09}).
We will generalize this result to the setting of $\mathsf{S5}$-subordination algebras. 
Since regular opens are fixpoints of $\int\, \cl\colon\O(X)\to\O(X)$, we introduce the notion of an $R$-regular open subset of an $\mathsf{S5}$-subordination space $(X,R)$ by replacing $\int$ with $\Box_R \, \int$ and $\cl$ with $R \, \cl$. 

\begin{definition}\label{def: R-regular open}
Let $\mathbf X=(X,R)$ be an $\mathsf{S5}$-subordination space.
We say that an  $R$-saturated open  subset of $X$ is {\em $R$-regular open} if it is a fixpoint of $ \Box_{R}\,\int\, R\,\cl \colon \O_R(\mathbf X)\to\O_R(\mathbf X)$. Let $\RO_R(\mathbf X)$ be the poset of $R$-regular open subsets  of $X$. 
\end{definition}

\begin{lemma} \label{lem: R-regular open}
Let $\mathbf X=(X,R)$ be an $\mathsf{S5}$-subordination space. 
Equip $\RO_R(\mathbf X)$ with the relation $\prec$ given by 
\[
U \prec V \iff R[ \cl (U)] \subseteq V.
\]
Then $\RO_R(\mathbf X)$ is a de Vries algebra isomorphic to $\RO(X/R)$.
\end{lemma}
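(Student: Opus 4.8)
The plan is to transport the de Vries structure of $\RO(X/R)$ along the frame isomorphism between $\O(X/R)$ and $\O_R(\mathbf X)$ recalled in the paragraph preceding \cref{thm:RI(B) iso O(X/R)}. Write $\pi \colon X \to X/R$ for the quotient map. Since $X$ is compact and $X/R$ is Hausdorff (because $R$ is a closed equivalence relation), $\pi$ is a closed map; moreover $\pi^{-1}[\pi[A]] = R[A]$ for every $A \subseteq X$, so that for $R$-saturated $U$ we have $U = \pi^{-1}[\pi[U]]$ and hence $\pi[U]$ is open in $X/R$. As recalled above, $\Theta \colon \O(X/R) \to \O_R(\mathbf X)$, $\Theta(W) = \pi^{-1}[W]$, is a frame isomorphism with inverse $U \mapsto \pi[U]$.

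The key step is to show that $\Theta$ intertwines the operator $\int\,\cl$ on $\O(X/R)$ with the operator $\Box_R\,\int\,R\,\cl$ on $\O_R(\mathbf X)$. For this I would first establish two identities. For open $W \subseteq X/R$,
\[
\pi^{-1}[\cl(W)] = R[\cl(\pi^{-1}[W])],
\]
which follows from the closedness of $\pi$ (giving $\pi[\cl A] = \cl\,\pi[A]$) applied to $A = \pi^{-1}[W]$, together with $\pi^{-1}\pi = R[-]$. For closed $C \subseteq X/R$,
\[
\Box_R\,\int(\pi^{-1}[C]) = \pi^{-1}[\int(C)],
\]
which follows because $\Box_R\,\int Z$ is the largest $R$-saturated open subset of $Z$ (using \cref{rem:box}\eqref{rem:box:item2} for the $\Box_R$ step), and on the other hand $\pi^{-1}[\int(C)]$ is an $R$-saturated open subset of $\pi^{-1}[C]$ which dominates any other such set $U$ since $\pi[U]$ is then an open subset of $C$. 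Composing these with $C = \cl(W)$ yields $\Box_R\,\int\,R\,\cl(\Theta W) = \Theta(\int\,\cl\,W)$ for every open $W \subseteq X/R$, so $\Theta$ restricts to an order-isomorphism between the fixpoint sets, that is, between $\RO(X/R)$ and $\RO_R(\mathbf X)$.

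Finally I would transport the de Vries structure. By de Vries duality (\cref{t:de-Vries-duality}), $\RO(X/R)$ with $U \prec V \iff \cl(U) \subseteq V$ is a de Vries algebra; since an order-isomorphism between posets transports the complete boolean algebra structure, $\RO_R(\mathbf X)$ is a complete boolean algebra and the restriction of $\Theta$ is a boolean isomorphism. It remains to check that the relation transported from $\prec$ on $\RO(X/R)$ is exactly the relation $\prec$ of the statement: for $U,V \in \RO_R(\mathbf X)$, setting $W = \pi[U]$ and $Z = \pi[V]$, the transported relation holds iff $\cl(W) \subseteq Z$, iff $\pi^{-1}[\cl(W)] \subseteq \pi^{-1}[Z] = V$, iff $R[\cl(U)] \subseteq V$ by the first identity above. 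Hence $\RO_R(\mathbf X)$ with the stated $\prec$ is a de Vries algebra and the restriction of $\Theta$ witnesses its isomorphism with $\RO(X/R)$.

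The only real content lies in the two topological identities of the second paragraph — in particular in checking that interior and closure in $X$ and in $X/R$ interact correctly through the quotient map, which is where closedness of $\pi$ and the saturated-set bookkeeping $\pi^{-1}\pi = R[-]$ are used; the rest is transport of structure along an order-isomorphism and an invocation of de Vries duality.
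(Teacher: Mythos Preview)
Your proposal is correct and follows essentially the same approach as the paper: both use the frame isomorphism between $\O(X/R)$ and $\O_R(\mathbf X)$ induced by the quotient map, establish that it intertwines the operator $\int\,\cl$ with $\Box_R\,\int\,R\,\cl$ (via the same two closedness/saturation identities, yours phrased for $\pi^{-1}$ and the paper's for $\pi$), and then check that $\prec$ is preserved. The only cosmetic difference is that you work with $\Theta = \pi^{-1}$ while the paper works with its inverse $f = \pi[-]$.
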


\begin{proof}
As we pointed out in the paragraph before \cref{thm:RI(B) iso O(X/R)}, $\pi \colon X \to X/R$ lifts to an isomorphism $f \colon \O_R(X) \to \O(X/R)$ given by $f(U)=\pi[U]$. 
We show that for each $U \in \O_R(X)$ we have
\[
U \in \RO_R(X) \iff \pi[U] \in \RO(X/R).
\]
On the one hand,
\begin{align*}
U \in \RO_R(X) \iff U=\Box_R(\int (R[\cl(U)])) \iff \pi[U]=\pi[\Box_R(\int (R[\cl(U)]))].
\end{align*}
On the other hand, 
\begin{align*}
\pi[U] \in \RO(X/R) \iff \pi[U]=\int (\cl(\pi[U])).
\end{align*}
Therefore, it is enough to prove that 
\[
\pi[\Box_R(\int (R[\cl(U)]))]=\int (\cl(\pi[U])).
\]
Since $\pi \colon X \to X/R$ is a quotient map and $X/R$ is compact Hausdorff, $\pi$ is a closed map. Thus, for each $R$-saturated subset $G$ of $X$ we have
\begin{align}\label{eq:frclA=clfA}
\pi[R[\cl(G)]]=\pi[\cl(G)]=\cl(\pi[G]).
\end{align}
Moreover, since $G$ is $R$-saturated,
\begin{equation}\label{eq:pi[Ac]=pi[A]c}
\pi[G^c]=\pi[G]^c.
\end{equation}
Therefore, if $H$ is an $R$-saturated subset of $X$, then 
\begin{align*}\label{eq:fBoxintA=intfA}
\pi[\Box_R(\int (H))] & =\pi[R[\cl(H^c)]^c] & \\
& = \pi[R[\cl(H^c)]]^c & \text{(by \eqref{eq:pi[Ac]=pi[A]c})}\\
& = \cl(\pi[H^c])^c & \text{(by \eqref{eq:frclA=clfA})}\\
& = \int (\pi[H^c]^c) & \\
& = \int(\pi[H]) & \text{(by \eqref{eq:pi[Ac]=pi[A]c}).}
\end{align*}
This equation together with \eqref{eq:frclA=clfA} yields 
\[
\pi[\Box_R(\int (R[\cl(U)]))]=\int(\pi[R[\cl(U)]]) = \int (\cl(\pi[U])).
\]
Thus, $f$ restricts to a poset isomorphism and hence a boolean isomorphism between $\RO_R(X)$ and $\RO(X/R)$. By \eqref{eq:frclA=clfA}, $f$ also preserves and reflects the relation:
\begin{align*}
U \prec V & \iff R[\cl(U)] \subseteq V \iff \pi[R[\cl(U)]] \subseteq \pi[V]\\
& \iff \cl(\pi[U]) \subseteq \pi[V] \iff \pi[U] \prec \pi[V].
\end{align*}
Therefore, $f$ is a structure-preserving bijection, hence an isomorphism of de Vries algebras by \cite[Prop.~1.5.5]{deV62}.
\end{proof}

\begin{proposition}\label{lem: SI and NI}
Let $\mathbf B=(B,S)$ be an $\mathsf{S5}$-subordination algebra and $\mathbf X=(X,R)$ its $\mathsf{S5}$-subordination space. For a round ideal $I$ of $\mathbf{B}$, we have\textup:
\begin{enumerate}[label=\normalfont(\arabic*), ref = \arabic*]
\item \label{i:round ideal} $\Phi(I^*)=\Box_R \int (\Phi(I)^c)$.
\item \label{i:normal round ideal} $\Phi(I^{**})=\Box_R\int(R[\cl\, \Phi(I)])$.
\item \label{i:iso normal round ideals and R-regular} $I$ is a normal round ideal iff $\Phi(I)$ is an $R$-regular open subset.
\end{enumerate}
Consequently, $\NI(\mathbf B)$ is isomorphic to $\RO_R(\mathbf X)$.
\end{proposition}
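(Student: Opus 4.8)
The plan is to prove the three numbered items in order, as each builds on the previous one, and then deduce the final isomorphism $\NI(\mathbf B)\cong\RO_R(\mathbf X)$ by combining item~\eqref{i:iso normal round ideals and R-regular} with \cref{lem: R-regular open}. For item~\eqref{i:round ideal}, I would start from \cref{thm:Round ideals frame}\eqref{thm:Round ideals frame:item2}, which gives $I^*=S^{-1}[\neg U(I)]$, and then apply $\Phi$. Using \cref{lem:S and * dually}\eqref{i:ideal} with the ideal $\neg U(I)$ in place of $I$ (note $\neg U(I)=L(\neg I)$ is indeed an ideal), we get $\Phi(S^{-1}[\neg U(I)])=\Box_R\Phi(\neg U(I))$. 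It then remains to compute $\Phi(\neg U(I))$: since $\neg U(I)=L(\neg I)$, the identities in~\eqref{eq:phi psi negation} give $\Phi(L(\neg I))=\int(\Psi(\neg I))=\int(\Phi(I)^c)$, where the last equality is again from~\eqref{eq:phi psi negation}. Chaining these yields $\Phi(I^*)=\Box_R\int(\Phi(I)^c)$.

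For item~\eqref{i:normal round ideal}, the natural approach is to apply item~\eqref{i:round ideal} twice. We have $I^{**}=(I^*)^*$, and $I^*$ is a round ideal by \cref{thm:Round ideals frame}\eqref{thm:Round ideals frame:item1} (it is an element of $\RI(\mathbf B)$, being the pseudocomplement in that frame), so item~\eqref{i:round ideal} applies to $I^*$ as well:
\[
\Phi(I^{**})=\Box_R\int\bigl(\Phi(I^*)^c\bigr)=\Box_R\int\Bigl(\bigl(\Box_R\int(\Phi(I)^c)\bigr)^c\Bigr).
\]
Now I would simplify the inner expression. Writing $A=\Phi(I)$, which is an $R$-saturated open set, one has $\int(A^c)$ is also $R$-saturated (the complement of an $R$-saturated set is $R$-saturated, and the interior of an $R$-saturated set is $R$-saturated since $\Box_R$ preserves $R$-saturatedness — or directly from \cref{rem:box}\eqref{rem:box:item2}), and by \cref{rem:box}\eqref{rem:box:item2} we have $\Box_R\int(A^c)=(R[X\setminus\int(A^c)])^c=(R[\cl A])^c$ using $X\setminus\int(A^c)=\cl A$. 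Hence $(\Box_R\int(A^c))^c=R[\cl A]$, and substituting back gives $\Phi(I^{**})=\Box_R\int(R[\cl\Phi(I)])$, as claimed. The main thing to be careful about here is the interplay between $R$-saturatedness and the topological operators, but all the needed facts are recorded in \cref{rem:box}.

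For item~\eqref{i:iso normal round ideals and R-regular}: by \cref{lem: SI(B) iso to ORS(Ult(B))}, $I$ is a round ideal iff $\Phi(I)\in\O_R(\mathbf X)$, and $\Phi$ restricts to an isomorphism $\RI(\mathbf B)\cong\O_R(\mathbf X)$. By \cref{def:NI(B)} and \cref{thm:NI booleanization of SI} (or directly: $I\in\NI(\mathbf B)$ iff $I=I^{**}$), $I$ is a normal round ideal iff $\Phi(I)=\Phi(I^{**})$, which by item~\eqref{i:normal round ideal} means $\Phi(I)=\Box_R\int(R[\cl\Phi(I)])$, i.e.\ $\Phi(I)$ is a fixpoint of $\Box_R\int R\cl$ on $\O_R(\mathbf X)$; by \cref{def: R-regular open} this says precisely that $\Phi(I)$ is $R$-regular open. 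Finally, since $\Phi$ restricts to an order-isomorphism from $\RI(\mathbf B)$ onto $\O_R(\mathbf X)$ carrying the normal round ideals bijectively onto the $R$-regular open sets, and since $\NI(\mathbf B)=\bool\RI(\mathbf B)$ is exactly the set of fixpoints of the double-pseudocomplement operation on $\RI(\mathbf B)$ while $\RO_R(\mathbf X)$ is the set of fixpoints of the corresponding operation on $\O_R(\mathbf X)$, the restriction of $\Phi$ is a poset isomorphism $\NI(\mathbf B)\cong\RO_R(\mathbf X)$; combined with \cref{lem: R-regular open} this also identifies the de Vries structure, and the proposition follows. I expect item~\eqref{i:normal round ideal} to be the main obstacle, since it requires tracking how $\Box_R$, $\int$, $\cl$, and complementation interact with $R$-saturatedness, but once item~\eqref{i:round ideal} is in hand this is a short computation using \cref{rem:box}.
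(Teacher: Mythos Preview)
Your approach is essentially the same as the paper's: for item~\eqref{i:round ideal} you use the ideal form $I^*=S^{-1}[\lnot U(I)]$ together with \cref{lem:S and * dually}\eqref{i:ideal}, whereas the paper uses the filter form $I^*=\lnot S[U(I)]$ together with \cref{lem:S and * dually}\eqref{i:filter}; both routes yield the intermediate identity $\Phi(I^*)=(R[\cl\Phi(I)])^c$, and items~\eqref{i:normal round ideal} and~\eqref{i:iso normal round ideals and R-regular} then follow exactly as you describe.

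Two small points. First, in item~\eqref{i:normal round ideal} your parenthetical claim that the interior of an $R$-saturated set is $R$-saturated is false in general (take $X$ the Cantor space, $R$ identifying two points $p,q$, and $A$ an $R$-saturated open set with $p$ on its boundary but $q$ in the interior of $A^c$); fortunately your computation does not need it, since the identity $\Box_R V=(R[V^c])^c$ from \cref{rem:box}\eqref{rem:box:item2} holds for arbitrary $V$. Second, for the ``Consequently'' part, invoking \cref{lem: R-regular open} does not by itself show that $\Phi$ carries $\prec$ to $\prec$. The paper closes this directly: using $\Phi(I^*)=(R[\cl\Phi(I)])^c$ one computes $I\prec J\iff \Phi(I^*)\cup\Phi(J)=X\iff R[\cl\Phi(I)]\subseteq\Phi(J)\iff\Phi(I)\prec\Phi(J)$. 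Your abstract argument can also be made to work, but you would need to observe explicitly that the pseudocomplement in $\O_R(\mathbf X)$ is $U\mapsto\Box_R\int(U^c)$ (so that $\RO_R(\mathbf X)=\bool\O_R(\mathbf X)$ and the $\prec$ of \cref{lem: R-regular open} coincides with the restricted well-inside relation), after which the frame isomorphism $\Phi\colon\RI(\mathbf B)\to\O_R(\mathbf X)$ automatically preserves well-inside.
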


\begin{proof}
\eqref{i:round ideal}.  We have
\begin{align*}
\Phi(I^*) & = \Phi(\neg S[U(I)]) & \text{(by \cref{thm:Round ideals frame}\eqref{thm:Round ideals frame:item2})}\\
& = (\Psi(S[U(I)]))^c & \text{(by \eqref{eq:phi psi negation})}\\
& = (R[\Psi(U(I))])^c & \text{(by \cref{lem:S and * dually}\eqref{i:filter})}\\
& = (R[\cl\,\Phi(I)])^c & \text{(by \eqref{eq:phi psi negation})}\\
& = \Box_R \int (\Phi(I)^c), &
\end{align*}
where the last equality follows from the fact that $\cl\, U = (\int(U^c))^c$ for each $U \subseteq X$.

\eqref{i:normal round ideal}. By the proof of item~\eqref{i:round ideal}, if $I$ is a round ideal, then 
\[
\Phi(I^*)=(R[\cl\,\Phi(I)])^c=\Box_R \int (\Phi(I)^c).
\]
Thus,
\begin{equation*}
\Phi(I^{**}) = \Box_R \int (\Phi(I^*)^c) = \Box_R \int (((R[\cl\,\Phi(I)])^c)^c) = \Box_R\int(R[\cl\, \Phi(I)]).
\end{equation*}

\eqref{i:iso normal round ideals and R-regular}. Since $I$ is normal iff $I=I^{**}$, this follows from item~\eqref{i:normal round ideal} and \cref{def: R-regular open}.

Finally, since $\Phi$ is an order-isomorphism, its restriction is an isomorphism of the boolean algebras $\NI(\mathbf B)$ and $\RO_R(\mathbf X)$. 
Moreover, if $I,J \in \NI(\mathbf B)$, then
\begin{align*}
I \prec J & \iff I^* \vee J = B& \\
&  \iff \Phi(I^* \vee J)=X  & \\
& \iff \Phi(I^*) \cup \Phi(J)=X &\\
& \iff R[ \cl\, \Phi(I)]^c \cup \Phi(J)=X & \text{(by the proof of item~\eqref{i:round ideal})}\\
& \iff R[ \cl\, \Phi(I)] \subseteq \Phi(J) & \\
& \iff \Phi(I) \prec \Phi(J).   & 
\end{align*}
Therefore, $\Phi$ is an isomorphism of de Vries algebras.
\end{proof}

Combining \cref{lem: R-regular open,lem: SI and NI} yields the following result, which gives an alternative proof of 
\cref{prop:B in subs5 then NI(B) in devs}.

\begin{theorem}
Let $\mathbf{B}=(B,S)$ be an $\mathsf{S5}$-subordination algebra and $\mathbf X=(X,R)$ its $\mathsf{S5}$-subordination space. Then
$\NI(\mathbf{B})$ is isomorphic to $\RO(X/R)$.
\end{theorem}

\section*{Acknowledgements}
We are very thankful to the referee for careful reading and useful comments which have improved the paper considerably.

Marco Abbadini was supported by the Italian Ministry of University and Research through the PRIN project n.\ 20173WKCM5 \emph{Theory and applications of resource sensitive logics} and by UK Research and Innovation (UKRI) under the UK government’s Horizon Europe funding guarantee (grant number EP/Y015029/1, Project ``DCPOS'').

Luca Carai was supported by the Italian Ministry of University and Research through the PRIN project n.\ 20173WKCM5 \emph{Theory and applications of resource sensitive logics} and by the Spanish Ministry of Science and Innovation (MCIN) and the European Union through the Juan de la Cierva-Formaci\'on 2021 programme (FJC2021-046977-I).

\end{document}